\documentclass[]{article}
\usepackage[a4paper,left=25mm,right=30mm,top=30mm,bottom=35mm]{geometry}
\usepackage{hyperref}
\usepackage{titlesec}
\usepackage{graphicx}
\usepackage{nomencl}
\makenomenclature
\usepackage{amsmath}%
\allowdisplaybreaks
\usepackage{color}
\usepackage{amsthm}
\usepackage{amsfonts}%
\usepackage{amssymb}%
\usepackage{graphicx}

\usepackage{paralist}
\usepackage[onehalfspacing]{setspace}
\usepackage{tikz}
\usepackage{color}
\usepackage{esint}
\usepackage{lmodern}

\theoremstyle{plain}

\newtheorem{definition}{Definition}[section]

\newtheorem{lemma}[definition]{Lemma}

\newtheorem{proposition}[definition]{Proposition}
\newtheorem{remark}[definition]{Remark}

\newtheorem{theorem}[definition]{Theorem}
\newtheorem{corollary}[definition]{Corollary}

\normalsize

\renewcommand{\Re}{\textnormal{Re}}

\newcommand{\gast}{\nabla_{\ast}}

\newcommand{\gasta}[1]{\nabla_{\Gamma_{\ast}^{#1}}}

\newcommand{\Dast}{\Delta_{\ast}}
\newcommand{\Dasta}[1]{\Delta_{\Gamma_{\ast}^{#1}}}
\newcommand{\Dasti}{\Delta_{\Gamma_{\ast}^i}}
\newcommand{\partialnuasta}[1]{\partial_{\nu_{\ast}^{#1}}}
\newcommand{\nuast}{\nu_{\ast}}
\newcommand{\trsigma}[1]{\boldsymbol{#1}\big|_{\Sigma_{\ast}}}

\newcommand{\brho}{\boldsymbol{\rho}}
\newcommand{\bmu}{\boldsymbol{\mu}}
\newcommand{\bw}{\boldsymbol{w}}
\newcommand{\bu}{\boldsymbol{u}}
\newcommand{\bv}{\boldsymbol{v}}

\newcommand{\ddeps}{\frac{d}{d\varepsilon}}
\newcommand{\Stricheps}{\big|_{\varepsilon=0}}

\DeclareMathOperator{\dist}{dist}

\DeclareMathOperator{\divergenz}{div}

\newcommand{\dH}{d\mathcal{H}}
\newcommand{\R}{\mathbb{R}}

\newcommand{\N}{\mathbb{N}}
\newcommand{\C}{\mathbb{C}}
\newcommand{\mb}[1]{\mathbb{#1} }

\newcommand{\pr}{\text{pr}}
\newcommand{\supp}{\text{supp}}

\newcommand{\Gammaast}{\Gamma_{\ast}}
\newcommand{\Gammaastdelta}{\Gamma_{\ast,\delta}}
\newcommand{\GammaastT}{\Gamma_{\ast,T}}
\newcommand{\Sigmaast}{\Sigma_{\ast}}
\newcommand{\Sigmaastdelta}{\Sigma_{\ast,\delta}}

\newcommand{\Gastzwei}{\Gamma_{\ast}^2}

\newcommand{\Nast}{N_{\ast}}
\newcommand{\tauast}{\tau_{\ast}}

\newcommand{\dalpha}{\delta^{\bar{\alpha}}}
\definecolor{light-gray}{rgb}{0.23, 0.27, 0.29}

\title{On the Surface Diffusion Flow with Triple Junctions\\ in Higher Space Dimensions}
\author{H. Garcke\footnote{University of Regensburg, 93040 Regensburg,
		harald.garcke@mathematik.uni-regensburg.de}, M. G\"o\ss{}wein\footnote{University of Regensburg, 93040 Regensburg,  michael.goesswein@mathematik.uni-regensburg.de}}
\setlength{\parindent}{0em} 
\begin{document}
\maketitle
{\bf Keywords:} Surface diffusion flow, triple junctions, geometric flows in higher space dimensions, short time existence, localization \\
{\bf 2010 \textit{Mathematics Subject Classification} 53C44, 35K52, 35K93, 35R35, 35K55}
\begin{abstract}
We show short time existence for the evolution of triple junction clusters driven by the surface diffusion flow. On the triple line we use the boundary conditions derived by Garcke and Novick-Cohen as the singular limit of a Cahn-Hilliard equation with degenerated mobility. These conditions are concurrency of the triple junction, angle conditions between the hypersurfaces, continuity of the chemical potentials and a flux-balance. For the existence analysis we first write the geometric problem over a fixed reference surface and then use for the resulting analytic problem an approach in a parabolic H\"older setting.
\end{abstract}

\section{Introduction}
\noindent
Motion by surface diffusion flows was firstly proposed by Mullins \cite{mullins1957theory} to describe the development of thermal grooves at grain boundaries of heated polycrystalls. This process depends mainly on two physical effects, which are evaporation and surface diffusion, i.e., molecular motion on the surface of heated, solid substances. In the case that surface diffusion is the more involved process Mullins derived that the profile of the surface $\Gamma$ evolves due to
\begin{align}\label{EquationMotionDuetoSurfaceDiffusion}
V_{\Gamma}=-\Delta_{\Gamma}H_{\Gamma}.
\end{align} 
Hereby, $V_{\Gamma}$ denotes the normal velocity, $\Delta_{\Gamma}$ the Laplace-Beltrami operator and $H_{\Gamma}$ the mean curvature operator. For closed hypersurfaces a lot of research was done in the end of the last century. Cahn and Taylor identified the surface diffusion flow as formal $\mathcal{H}^{-1}$-gradient flow of the surface energy, cf. \cite{taylor1994linking}. The evolution is also connected to the Cahn Hilliard equations as Cahn, Elliott and Novick-Cohen proved via formal asymptotics that the surface diffusion flow is its singular limit, cf. \cite{cahn1996cahn}. In the case of closed curves, short time existence was proven by Elliott and Garcke in \cite{elliottgarcke1994existence}. The result was generalized to closed hypersurfaces of arbitrary dimensions by Escher, Mayer and Simonett, cf. \cite{eschermayersimonnett1998surface}. Unlike other geometric flows, e.g., the mean curvature flow, it is very typical for the surface flow to have a loss of convexity and to develop singularities during the evolution, cf. \cite{gigaito1997pinching}, \cite{gigaito1999loss}.\\
In this article we are interested in the evolution of triple junction clusters with respect to surface diffusion flow. A triple junction cluster consists of a set of hypersurfaces such that each connected component of the boundary of a hypersurface is also boundary of two other hpyersurfaces. We will restrict to the case of three connected hypersurfaces $\Gamma^1, \Gamma^2, \Gamma^3$ in $\R^n$ meeting in one triple junction $\Sigma$. The result transfers to more general situations. Additionally, we need that the $\Gamma^i$ are embedded, oriented, compact and do not intersect with each other. We will consider evolutions of such objects with respect to \eqref{EquationMotionDuetoSurfaceDiffusion} such that at every time $t$ we have a decomposition
\begin{align*}
\Gamma(t):=\Gamma^1(t)\cup \Gamma^2(t)\cup\Gamma^3(t)\cup \Sigma(t)
\end{align*} 
into the three hypersurfaces and the triple junction, such that the following boundary conditions are fulfilled:
\begin{align}
\partial\Gamma^1(t)=\partial\Gamma^2(t)&=\partial\Gamma^3(t)=\Sigma(t), \tag{CC}\\
\angle(\nu_{\Gamma^i(t)},\nu_{\Gamma^j(t)})&=\theta^k,\quad (i,j,k)\in \{(1,2,3),(2,3,1),(3,1,2)\},\tag{AC} \\
\gamma^1H_{\Gamma^1(t)}+\gamma^2H_{\Gamma^2(t)}+\gamma^3H_{\Gamma^3(t)}&=0, \tag{CCP}\\
\nabla_{\Gamma^1(t)}H_{\Gamma^1(t)}\cdot\nu_{\Gamma^1(t)}=\nabla_{\Gamma^2(t)}H_{\Gamma^2(t)}\cdot\nu_{\Gamma^2(t)}&=\nabla_{\Gamma^3(t)}H_{\Gamma^3(t)}\cdot\nu_{\Gamma^3(t)}. \tag{FB}
\end{align}
Here, $\nu_{\Gamma^i(t)}$ denotes the outer conormal of $\Gamma^i(t)$, $\gamma^1,\gamma^2,\gamma^3$ constants determining the energy density on the hypersurfaces $\Gamma^i(t)$ and $\theta^1,\theta^2,\theta^3\in(0,2\pi)$ given angles, that are related to the $\gamma^i$. Indeed, the condition (AC) is equivalent to Young's law
\begin{align}
\frac{\sin(\theta^1)}{\gamma^1}=\frac{\sin(\theta^2)}{\gamma^2}=\frac{\sin(\theta^3)}{\gamma^3}.
\end{align}
The condition (CCP) results from continuity of the chemical potentials at the triple junction and (FB) is equivalent to the flux balances. (CC) gives the concurrency of the triple junction during the flow.
\begin{figure}[h]
	\centering
	\includegraphics[height=6cm]{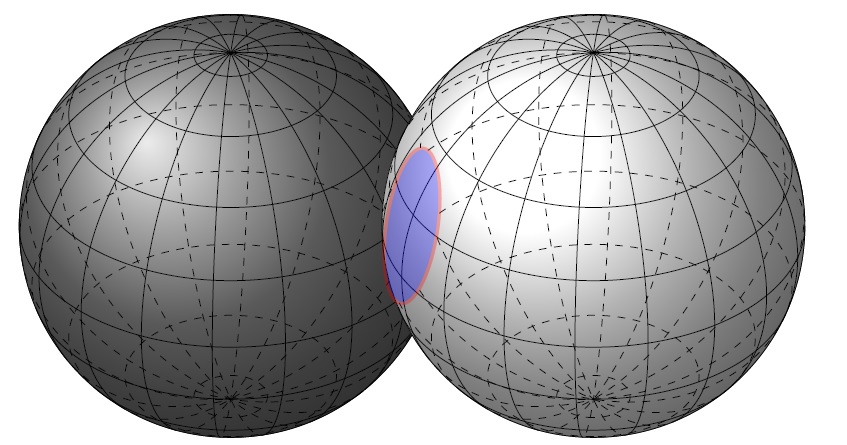}
	\caption{The picture shows the considered kind of triple junction cluster. In total, there are three hypersurfaces. In this illustration these are the two spherical caps and the flat blue area. The red line marks the triple junction, which is the boundary of all three hypersurfaces. Note that if the two enclosed volumes are unequal the blue surfaces will normally bend into direction of the larger volume.
	The shown geometry is the famous solution of the double-bubble conjecture, that is the geometric object with two fixed enclosed volumes and least surface area.}
\end{figure}\\
In this paper, we will show short time existence of the surface diffusion flow with triple junctions. Roughly, we will prove the following.
\begin{theorem}[Short time existence for the surface diffusion flow of triple junction clusters]\ \\ Let $\Gamma_{\ast}\subset\R^n$ be a triple junction cluster of class $C^{5+\alpha}$, such that the three hypersurfaces $\Gamma^i_{\ast}$ are compact, embedded, orientable and connected. Then, for all triple junction clusters $\Gamma_0$ of class $C^{4+\alpha}$, which are close enough to $\Gamma_{\ast}$, there exists a solution to the evolution with respect to surface diffusion flow \eqref{EquationMotionDuetoSurfaceDiffusion}, the boundary conditions (CC), (AC), (CCP), (FB), and initial data $\Gamma(0)=\Gamma_0$. Additionally, the existence time is uniform in $\Gamma_0$ and there is a bound for the distance of $\Gamma(t)$ to $\Gammaast$, which is uniform in $t$ and $\Gamma_0$.
\end{theorem}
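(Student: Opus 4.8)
\bigskip
\noindent\textbf{Strategy of the proof.}
The plan is to rewrite the geometric evolution as an analytic one over the fixed reference cluster $\Gammaast$, to establish maximal regularity for the linearization in parabolic H\"older spaces, and to close the argument by a contraction.

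\emph{Step 1 (reduction to a fixed reference configuration).} For each $i$ we choose a vector field $\mu^i$ on $\Gasti$ that equals $\Nast$ in the interior but is suitably adapted near $\Sigmaast$, and write $\Gamma^i(t)$ as the graph of a scalar height $\rho^i\colon\Gasti\times[0,T]\to\R$ in the direction $\mu^i$; this fixed parametrization removes the reparametrization degeneracy of the purely geometric flow. Inserting this ansatz into $V_{\Gamma^i}=-\laplace_{\Gamma^i}H_{\Gamma^i}$ turns the flow into a quasilinear fourth order parabolic system
\[
\partial_t\rho^i = -\Dasti^2\rho^i + f^i\big(\rho,\grad\rho,\grad^2\rho,\grad^3\rho\big)\quad\text{on }\Gasti\times(0,T],\qquad \rho^i\big|_{t=0}=\rho_0^i,
\]
whose top order coefficients depend only on $\grad\rho$ and where $\rho_0$ encodes $\Gamma_0$. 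The conditions (CC), (AC), (CCP), (FB) become a coupled set of boundary conditions on $\Sigmaast\times(0,T]$: (CC) of order zero, (AC) of order one, (CCP) of order two (it involves $H$) and (FB) of order three (it involves $\grad H$), which together provide exactly the number of boundary conditions a fourth order system on three hypersurfaces requires.

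\emph{Step 2 (linearization and the Lopatinskii--Shapiro condition).} We linearize the system at $\rho=0$. The bulk part is $\partial_t+\Dasti^2$, which is normally parabolic, so the real issue is the boundary operator obtained by linearizing (CC), (AC), (CCP), (FB) at $\Gammaast$. To analyse it we freeze coefficients, flatten $\Sigmaast$, and reduce to the model problem on three half-spaces meeting along a common $\R^{n-2}$; there we verify the Lopatinskii--Shapiro (complementing) condition, equivalently that the associated system of ordinary differential equations on the half-line (after Fourier--Laplace transform in the tangential and time variables) admits only the trivial bounded solution for homogeneous data. The admissibility of $\Gammaast$ --- that the $\theta^k$ lie in $(0,2\pi)$ and satisfy Young's law, so that the angle conditions are non-degenerate --- is precisely what keeps the determinant of this ODE system from vanishing. \textbf{This verification is the main obstacle}: in contrast to the closed-hypersurface case there is no off-the-shelf result, and the computation couples all three sheets through the first order conditions (CC), (AC) and the third order condition (FB).

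\emph{Step 3 (linear theory via localization).} Using the model problems from Step 2 together with a localization argument --- a partition of unity on $\Gammaast$, interior charts in which one recovers the standard fourth order Schauder theory for closed hypersurfaces, and boundary charts near $\Sigmaast$ handled by the flattened model --- and combining these with a priori Schauder estimates, we show that the inhomogeneous linearized problem is an isomorphism from the solution space $C^{4+\alpha,1+\alpha/4}$ onto the space of data (right hand side, boundary data, initial datum) subject to the natural compatibility conditions on $\Sigmaast\times\{0\}$. The regularity $\Gammaast\in C^{5+\alpha}$ is used here so that the geometric quantities of $\Gammaast$ entering the coefficients of the linear operators are H\"older continuous of the order required for $C^{4+\alpha}$ solutions, while $\Gamma_0\in C^{4+\alpha}$ is the natural trace class at $t=0$.

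\emph{Step 4 (nonlinear existence and uniformity).} Finally we recast the full problem as a fixed point equation $\rho=\mathcal{L}^{-1}\big(\mathrm{data}(\rho_0)+\mathcal{N}(\rho)\big)$ on a small ball in $C^{4+\alpha,1+\alpha/4}$ over $[0,T]$, where $\mathcal{L}$ is the operator from Step 3 and $\mathcal{N}$ collects the quasilinear remainder and the nonlinear parts of the boundary operators. Since the Lipschitz constant of $\mathcal{N}$ on a small ball of functions vanishing at $t=0$ is small --- it carries factors of the ball radius and of a positive power of $T$ --- the map is a contraction for $T$ small and $\Gamma_0$ close enough to $\Gammaast$ (and satisfying the compatibility conditions), and its fixed point yields the desired solution. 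Because the contraction radius, $T$, and $\|\mathcal{L}^{-1}\|$ depend only on $\Gammaast$ and on an upper bound for $\|\rho_0\|_{C^{4+\alpha}}$, the existence time is uniform over $\Gamma_0$ in a fixed neighbourhood of $\Gammaast$, and the resulting bound on $\sup_{t\in[0,T]}\|\rho(\cdot,t)\|_{C^{4+\alpha}}$ translates into the claimed uniform bound on $\dist(\Gamma(t),\Gammaast)$.
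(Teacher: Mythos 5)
Your outline follows the same broad strategy as the paper (fixed reference cluster, linearization, Lopatinskii--Shapiro, localization, contraction), but two of your steps are under-specified in ways that would break the argument if taken literally. First, the parametrization in Step 1: writing each $\Gamma^i(t)$ as a scalar graph in the direction of a \emph{fixed} vector field $\mu^i$ does not work at the junction. Concurrency (CC) is a vector-valued constraint in $\R^{n+1}$ on the three boundary parametrizations, so with one scalar unknown per sheet and fixed directions it cannot be counted as a single zero-order boundary condition; your tally of six boundary conditions silently presupposes a reduction you have not provided. The device the paper uses is to keep the normal height $\brho$ as the unknown and to introduce an additional \emph{tangential} displacement $\bmu$ supported near $\Sigmaast$ which is slaved to the boundary trace of $\brho$ via the matrix $\mathcal{T}$ (equation \eqref{EquationEquivalentCondtionforTripleJunctionConservation}) and extended into the interior through the projection $\pr^i_{\Sigma}$; only then is (CC) equivalent to the scalar sum condition $\gamma^1\rho^1+\gamma^2\rho^2+\gamma^3\rho^3=0$, the count of boundary conditions comes out to six, and the price is a nonlocal lower-order term that must be (and in the paper is) handled perturbatively in the linearization and in the contraction estimates.

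Second, Step 3 asserts that the linearized problem is an isomorphism ``using the model problems from Step 2 together with a localization argument \dots combined with a priori Schauder estimates.'' Localization plus model-problem solvability yields a priori estimates and uniqueness, but not, by itself, \emph{existence} for the global linear problem on the cluster, and this is exactly where the bulk of the paper's work lies: because the sum condition on $\Sigmaast$ removes a degree of freedom in the testing procedure, the fourth-order problem admits no direct weak formulation, so the paper splits it into a coupled second-order system via the auxiliary variable $v^i=-\Dast u^i+C_v u^i$, proves existence of weak solutions by a Galerkin scheme built on the eigenfunctions of \cite{depner2014mean}, upgrades to $H^3$ regularity, and only then uses the localized LSU Schauder theory together with a cut-off/compactness argument to identify the weak solution with a $C^{4+\alpha,1+\alpha/4}$ solution, before reinstating lower-order terms, inhomogeneities and general initial data by perturbation. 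Without this (or some substitute such as a continuity method with an explicitly solvable comparison operator, which is not available off the shelf for this coupled junction problem), your Step 3 is an assertion rather than a proof; the remaining steps (LS verification by the ODE energy method, and the fixed-point argument with uniform time and radius, including the Lunardi-type treatment of the fully nonlinear angle conditions) do match the paper once the linear isomorphism is in hand.
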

We want to note that our results refers to an analytic framework over a reference geometry, which we will introduce later. Due to this reason we call our existence result analytic existence, i.e., existence in the used analytic framework. Whereas we speak about geometric existence and uniqueness when one can actually prove these results for the original geometric problem. In the literature, most existence results for geometric flows of higher dimensional surfaces only give analytic results. For example, Depner, Garcke and Kohsaka proved in \cite{depner2014mean} analytic existence for triple junction clusters evolving due to mean curvature flow. Indeed, the strategy of our proof is based on this work. Abels, Garcke and M\"uller proved analytic existence of volume-preserving mean curvature flow and Wilmore flow with line tension of connected hypersurfaces in $\R^3$ with a free boundary, cf. \cite{abelsgarckemuller2016localwellposednesswilmoreflowlinetension}. In \cite{abelsbutzexistencecurvefidd}, Abels and Butz proved analytic existence for the evolution of open curves with respect to curve diffusion flow, which is the one dimensional pendant to the surface diffusion flow, with a contact angle and rough initial data. Indeed, for curves there are already methods to prove geometric existence and uniqueness. E.g., Spener proved in \cite{spener2017shorttimeexistenceclampedcurves} such an result for clamped curves evolving due to the elastic flow. Garcke and Novick-Cohen proved the same for the one dimensional version of our problem, cf. \cite{garcke2000singular}. Using their ideas, Garcke, Menzel and Pluda proved in \cite{garckemenzelpluda2019willmore} geometric existence and uniqueness for classes of planar networks evolving with respect to Willmore flow. The last two works both work in a setting with parabolic H\"older spaces and use partly the same methods like we do. Due to the higher space dimensions we have to include global weak solution theory and a localization procedure.\\
The article is organized as follows. In Section \ref{SectionPrelim} we will sum up the notation we use and then give an overview of important results for parabolic H\"older and Sobolev spaces on manifolds, especially taking the triple junction cluster into account. In Section \ref{SectionParametrizationCom} we will parametrize our problem over a fixed reference configuration and derive a corresponding analytic problem from that. Additionally, we will state the needed compatibility conditions and the main result of this article. In Section \ref{SectionLinearization} we linearize our equations in the reference configuration and then show in Section \ref{SectionLinearAnalysis} existence of continuous inverse of the linear operator. In order to do so we first find a weak formulation and derive existence of weak solution (Section \ref{SubsectionWeakTheory}), then prove H\"older regularity for the localized problem and connect the localization with the original problem using compactness arguments (Section \ref{SubsectionSchauderEstimates}) and at the end (Section \ref{SubsectionCompleteLinearProblem}) include lower order terms, inhomogeneities and initial data, which we had to omit in the beginning. Finally, we use a fixed point argument to derive our existence result for the original problem from the linear theory.

\section{Preliminaries}\label{SectionPrelim}
\noindent
\subsection{Notation}

In the whole paper $\Gamma(t)$ will refer to the evolving triple junction cluster at time $t$. The triple junction cluster consist of three compact, embedded, connected, orientable hypersurfaces $\Gamma^1(t), \Gamma^2(t), \Gamma^3(t)$ in $\R^{n+1}$ with a common boundary, the triple junction $\Sigma(t)$. Two of the hypersurfaces will always form a volume containing the third hypersurface, which we will choose to be $\Gamma^1$. By $\Omega_{12}$ and $\Omega_{13}$ we denote the volume enclosed by $\Gamma^1$ and $\Gamma^2$ resp. $\Gamma^1$ and $\Gamma^3$. We choose the unit normal fields $N^i$ of the $\Gamma^i$ such that $N^1$ points in the interior of $\Omega_{12}$, $N^2$ outside of $\Omega_{12}$ and $N^3$ into the inside of $\Omega_{13}$. The outer unit conormal of $\Gamma^i$ will be denoted by $\nu^i$. \\
Furthermore, we use the standard notation for quantities of differential geometry, see for example \cite[Chapter 6]{kuehneldifferentialgeometry2015}. That includes the canonical basis $\{\partial_i\}_{i=1,...,n}$ of the tangent space $T_p\Gamma$ at a point $p\in\Gamma$ induced by a parametrization $\varphi$, the entries $g_{ij}$ of the metrical tensor $g$, the entries $g^{ij}$ of the inverse metric tensor $g^{-1}$, the Christoffel symbols $\Gamma^{i}_{jk}$, the second fundamental form $II$, its squared norm $|II|^2$ and the entries $h_{ij}$ of the shape operator. We use the usual differential operators on a manifold $\Gamma$, which are the surface gradient $\nabla_{\Gamma}$, the surface divergence $\divergenz_{\Gamma}$ and the Laplace-Beltrami operator $\Delta_{\Gamma}$.\\
By $\brho=(\rho^1,\rho^2, \rho^3)$ we will denote the evolution in normal direction and by $\bmu=(\mu^1, \mu^2, \mu^2)$ the evolution in tangential direction, which we will use to track the evolution of $\Gamma(t)$ over a reference frame $\Gammaast$ via a direct mapping approach. $\Gamma_{\brho,\bmu}$ will denote the triple junction cluster that is given as graph over $\Gammaast$. We will normally write $\Gamma_{\brho}$ instead of $\Gamma_{\brho,\bmu}$ as $\bmu$ will always be given as function in $\brho$. Details about this parametrization will be explained in Section \ref{SectionParametrizationCom}. Sub- and superscripts $\brho$ resp. $\bmu$ on a quantity will indicate that the quantity refers to the manifold $\Gamma_{\brho}$ resp. $\Gamma_{\brho,\bmu}$. An asterisk will denote an evaluation in the reference geometry. Both conventions are also used for differential operators. For example, we will write $\nabla_{\rho}$ for $\nabla_{\Gamma_{\rho}}$ and $\gast$ for $\nabla_{\Gammaast}$. We will denote by $J_{\rho}$ the transformation of the surface measure, that is,
\begin{align*}
\dH^n(\Gamma_{\rho})=J_{\rho}\dH^n(\Gammaast).
\end{align*} 
If we index a domain or a submanifold in $\R^n$ with a $T$ or $\delta$ in the subscript, this indicated that the corresponding parabolic set, e.g., $\Gamma_T=\Gamma\times[0,T]$. With an abuse of notation, in most parts of the work we will not differ between quantities on $\Gamma_{\rho}$ and the pullback of them on $\Gammaast$. An index $i$ will be used to indicate that a quantity refers to the hypersurface $\Gamma^i(t)$ resp. $\Gammaast^i$. A quantity in bold characters will refer to the vector consisting of the quantity on the three hypersurfaces of a triple junction cluster.\\
The subscript TJ in a function space will indicate that the function space has to be read as product space on each hypersurface. For example, we write
\begin{align*}
L^2_{TJ}(\Gamma):=L^2(\Gamma^1)\times L^2(\Gamma^2)\times L^2(\Gamma^3).
\end{align*} 
We will denote by $\sigma_i$ the order of the $i$-th boundary condition of our problem, i.e.,
\begin{align}\label{EquationDefinitionofSigma}
\sigma_1=0,\quad \sigma_2=\sigma_3=1,\quad \sigma_4=2,\quad \sigma_5=\sigma_6=3.
\end{align}
Finally, we will always use the convention of dynamical constants. This will also be used for dynamical coefficient functions of lower order terms, which will be introduced in Section \ref{SectionLinearization} 

\subsection{Function Spaces}

In this section we want to introduce the two most important function spaces on manifolds we will use, which are Sobolev and parabolic H\"older spaces. Here, $(\Gamma,\mathcal{A})$ will always be a compact, orientable, embedded submanifold $\Gamma$ of $\R^{n+1}$ together with a maximal atlas $\mathcal{A}$. 
\begin{definition}[Sobolev spaces on manifolds]
	Let $\Gamma$ be of class $C^j, j\in\N$. Then we define for $k\in\N, k<j, 1\le p\le \infty$ the Sobolev space $W^{k,p}(\Gamma)$ as the set of all functions $f: \Gamma\to\R$, such that for any chart $\varphi\in\mathcal{A}, \varphi: V\to U$ with $V\subset\Gamma, U\subset \R^n$ the map $f\circ \varphi^{-1}$ is in $W^{k,p}(U)$. Hereby, $W^{k,p}(U)$ denotes the usual Sobolev space. We define a norm on $W^{k,p}(\Gamma)$ by
	\begin{align}\label{EquationNormSobolevSpace}
	\|f\|_{W^{k,p}(\Gamma)}:=\sum_{i=1}^s \|f\circ \varphi_i^{-1}\|_{W^{k,p}(U_i)},
	\end{align}
	where $\{\varphi_i: V_i\to U_i\}_{i=1,...,s}\subset\mathcal{A}$ is a family of charts that covers $\Gamma$. 
\end{definition}
\begin{remark}[Equivalent norms on $W^{k,p}(\Gamma)$]\ \\\vspace{-0,6cm}
	\begin{itemize}
		\item[i.)] The norm on $W^{k,p}(\Gamma)$ depends on the choice of the $\varphi_i$ but for a different choice we will get an equivalent norm as the transitions maps are $C^j$. \vspace{-0,2cm}
		\item[ii.)] For the space $W^{1,p}(\Gamma)$ we will use the norm
		\begin{align}
		\|f\|_{W^{1,p}(\Gamma)}=\left(\int_{\Gamma}|\nabla_{\Gamma}f|^p+|f|^p\dH^n\right)^{\frac{1}{p}}.
		\end{align}
		Equivalence to (\ref{EquationNormSobolevSpace}) follows directly from the representation of the surface gradient in local coordinates. \vspace{-0,2cm}
		\item[iii.)] As usual, we will write $H^k(\Gamma)$ for $W^{k,2}(\Gamma)$.
	\end{itemize}
\end{remark}
As we want to use these spaces in our weak analysis, we will need a version of the Ehrling lemma for triple junction clusters.
\begin{proposition}[Ehrling-type lemma on triple junction]\label{LemmaHilfslemmafurEnergieabschatzungschwacheLt}\ \\
	For every $\varepsilon>0$ there exists a $C_{\varepsilon}>0$ only depending on $\varepsilon$ such that for all $\boldsymbol{u}\in H^{1}_{TJ}(\Gamma)$ it holds
	\begin{align}
	\|\boldsymbol{u}\|_{L^2(\Sigma)^3}\le \varepsilon \|\nabla_{\Gamma}\boldsymbol{u}\|_{L^2_{TJ}(\Gamma)}+C_{\varepsilon}\|\boldsymbol{u}\|_{L^2_{TJ}(\Gamma)}.
	\end{align}
\end{proposition}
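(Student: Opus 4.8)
The plan is to argue exactly as in the classical Ehrling lemma, by a compactness--contradiction argument. The two analytic ingredients are: (a) the compactness of the embedding $H^1_{TJ}(\Gamma)\hookrightarrow L^2_{TJ}(\Gamma)$, and (b) the compactness of the trace operator $\gamma\colon H^1_{TJ}(\Gamma)\to L^2(\Sigma)^3$, $\boldsymbol{u}\mapsto\boldsymbol{u}|_{\Sigma}$. Both are obtained hypersurface by hypersurface; the triple-junction structure enters only through the fact that $\partial\Gamma^i=\Sigma$ for all $i$, so that the three traces land in the common space $L^2(\Sigma)$. For (a) I would cover each compact manifold-with-boundary $\Gamma^i$ by finitely many charts and invoke the Euclidean Rellich--Kondrachov theorem on bounded Lipschitz domains, using the equivalence of the chart-wise norm with the intrinsic $H^1$-norm noted above. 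For (b) I would work in finitely many boundary charts flattening $\Sigma$, apply the classical half-space trace theorem $H^1(\R^{n}_+)\to H^{1/2}(\R^{n-1})$ in each chart to see that $\gamma$ is bounded from $H^1_{TJ}(\Gamma)$ into $H^{1/2}(\Sigma)^3$, and then compose with the compact embedding $H^{1/2}(\Sigma)\hookrightarrow L^2(\Sigma)$ (Rellich on the closed $(n-1)$-dimensional manifold $\Sigma$).

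Granting (a) and (b), fix $\varepsilon>0$ and suppose no admissible constant exists. Then there are $\boldsymbol{u}_k\in H^1_{TJ}(\Gamma)$ with
\[
\|\boldsymbol{u}_k\|_{L^2(\Sigma)^3}>\varepsilon\|\nabla_{\Gamma}\boldsymbol{u}_k\|_{L^2_{TJ}(\Gamma)}+k\,\|\boldsymbol{u}_k\|_{L^2_{TJ}(\Gamma)},
\]
and after normalising $\|\boldsymbol{u}_k\|_{L^2(\Sigma)^3}=1$ we read off $\|\nabla_{\Gamma}\boldsymbol{u}_k\|_{L^2_{TJ}(\Gamma)}<\varepsilon^{-1}$ and $\|\boldsymbol{u}_k\|_{L^2_{TJ}(\Gamma)}<1/k$. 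Hence $(\boldsymbol{u}_k)$ is bounded in $H^1_{TJ}(\Gamma)$, so along a subsequence $\boldsymbol{u}_k\rightharpoonup\boldsymbol{u}$ in $H^1_{TJ}(\Gamma)$; by (a) the convergence is strong in $L^2_{TJ}(\Gamma)$, and since $\|\boldsymbol{u}_k\|_{L^2_{TJ}(\Gamma)}\to0$ we get $\boldsymbol{u}=0$. By (b), $\boldsymbol{u}_k|_{\Sigma}\to\boldsymbol{u}|_{\Sigma}=0$ in $L^2(\Sigma)^3$, contradicting $\|\boldsymbol{u}_k\|_{L^2(\Sigma)^3}=1$. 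This gives the inequality for this $\varepsilon$ with the constant produced by the argument, and as $\varepsilon>0$ was arbitrary the proposition follows. Note that the bound is stated with $\|\nabla_{\Gamma}\boldsymbol{u}\|_{L^2_{TJ}(\Gamma)}$ rather than the full $H^1$-norm on the right; this is already what the contradiction delivers, but it also follows from the usual Ehrling inequality $\|\boldsymbol{u}\|_{L^2(\Sigma)^3}\le\varepsilon\|\boldsymbol{u}\|_{H^1_{TJ}(\Gamma)}+C_{\varepsilon}\|\boldsymbol{u}\|_{L^2_{TJ}(\Gamma)}$ together with $\|\boldsymbol{u}\|_{H^1_{TJ}(\Gamma)}\le\|\nabla_{\Gamma}\boldsymbol{u}\|_{L^2_{TJ}(\Gamma)}+\|\boldsymbol{u}\|_{L^2_{TJ}(\Gamma)}$ after renaming constants.

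The one genuinely non-routine point is the compactness of the trace in (b); the boundedness $H^1\to H^{1/2}$ is classical, but one must choose the flattening charts near $\Sigma$ consistently for all three $\Gamma^i$ simultaneously, which is possible since $\Gamma$ is a fixed configuration of sufficient regularity. If one prefers to avoid fractional-order spaces, the compactness can instead be obtained by localising near $\Sigma$ and combining the non-compact trace bound $\|\boldsymbol{u}_k\|_{L^2(\Sigma)^3}\le C\|\boldsymbol{u}_k\|_{H^1_{TJ}(\Gamma)}$ with a normal-direction difference-quotient estimate after the Rellich selection in (a); for the present $L^2$-statement, however, the $H^{1/2}$-route is the shortest.
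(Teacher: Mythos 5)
Your argument is correct: the compactness--contradiction scheme, with Rellich on each compact $\Gamma^i$ and compactness of the trace $H^1(\Gamma^i)\to L^2(\Sigma)$ (via boundedness into $H^{1/2}(\Sigma)$ composed with the compact embedding $H^{1/2}(\Sigma)\hookrightarrow L^2(\Sigma)$), delivers exactly the stated inequality, and the normalisation step is legitimate because the inequality is homogeneous of degree one. Note that the paper itself gives no proof here but defers to \cite[Proposition 2.11]{goesswein2019Dissertation}, so there is no in-text argument to compare against; your proof is a perfectly acceptable self-contained substitute. Two small remarks. First, the point you flag as ``genuinely non-routine'' --- choosing flattening charts near $\Sigma$ consistently for all three $\Gamma^i$ --- is actually not needed: the inequality decouples, since $\|\boldsymbol{u}\|_{L^2(\Sigma)^3}$ is just built from the three individual trace norms $\|u^i\|_{L^2(\partial\Gamma^i)}$, so one proves the scalar estimate on each $\Gamma^i$ separately and sums; no coupling between the hypersurfaces enters. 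Second, if one wants an argument with explicit (constructive) constants, the usual alternative is the trace-interpolation inequality $\|u\|_{L^2(\partial\Gamma^i)}^2\le C\,\|u\|_{L^2(\Gamma^i)}\,\|u\|_{H^1(\Gamma^i)}$ (obtained by integrating $\divergenz_{\Gamma^i}(u^2 X)$ for a tangential field $X$ with $X\cdot\nu^i=1$ on $\Sigma$) followed by Young's inequality; your compactness route is shorter but non-constructive in $C_\varepsilon$. Neither remark affects correctness.
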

\begin{proof}
See \cite[Proposition 2.11]{goesswein2019Dissertation}.	
\end{proof}

The other important class of function spaces on manifolds are parabolic H\"older spaces. We will first define them on a domain $\Omega\subset\R^{n}$ with smooth boundary $\partial\Omega$. For $\alpha\in(0,1), a,b\in\R$ consider the two semi-norms for a function $f: \bar{\Omega}\times[a,b]\to\R$ given by
\begin{align*}
\langle f\rangle_{x,\alpha}&:=\sup_{x_1,x_2\in\bar{\Omega},t\in[a,b]}\frac{|f(x_1,t)-f(x_2,t)|}{|x_1-x_2|^{\alpha}},\\
\langle f\rangle_{t,\alpha}&:=\sup_{x\in\bar{\Omega},t_1,t_2\in[a,b]}\frac{|f(x,t_1)-f(x,t_2)|}{|t_1-t_2|^{\alpha}}.
\end{align*}
Now, we define for $k, k'\in\N, \alpha\in (0,1), m\in\N$ the spaces
\begin{align*}
C^{\alpha,0}(\bar{\Omega}\times[a,b])&:=\{f\in C(\bar{\Omega}\times[a,b])|\langle f\rangle_{x,\alpha}<\infty\},\\
\|f\|_{C^{\alpha,0}(\bar{\Omega}\times[a,b])}&:=\|f\|_{\infty}+\langle f\rangle_{x,\alpha},\\
C^{0,\alpha}(\bar{\Omega}\times[a,b])&:=\{f\in C(\bar{\Omega}\times[a,b])| \langle f\rangle_{t,\alpha}<\infty\},\\
\|f\|_{C^{0,\alpha}(\bar{\Omega}\times[a,b])}&:=\|f\|_{\infty}+\langle f\rangle_{t,\alpha}, \\
C^{k+\alpha, 0}(\bar{\Omega}\times[a,b])&:=\{f\in C(\bar{\Omega}\times[a,b])|\forall t\in[a,b]: f\in C^k(\bar{\Omega}),\\
&\phantom{:=\{ }  \forall\beta\in \N^n_0, |\beta|\le k: \partial^x_{\beta}f\in C^{\alpha,0}(\bar{\Omega}\times[a,b])\},\\
\|f\|_{C^{k+\alpha, 0}(\bar{\Omega}\times[a,b])}&:=\sum_{|\beta|\le k}\|\partial_{\beta}^xf\|_{\infty}+\sum_{|\beta|=k}\langle \partial_{\beta}^xf \rangle_{x,\alpha}, \\
C^{k+\alpha,\frac{k+\alpha}{m}}(\bar{\Omega}\times[a,b])&:=\{f\in C(\bar{\Omega}\times[a,b])|\forall\beta\in\N_0^n,i\in\N_0,mi+|\beta|\le k:\ \\ &\phantom{:=\{ }\partial_t^i\partial_{\beta}^xf\in C^{\alpha,0}(\bar{\Omega}\times[a,b])\cap C^{0,\frac{k+\alpha-mi-|\beta|}{m}}(\bar{\Omega}\times[a,b])  \},\\
\|f\|_{C^{k+\alpha,\frac{k+\alpha}{m}}(\bar{\Omega}\times[a,b])}&:=\sum_{0\le mi+|\beta|\le k}\left(\|\partial_t^i\partial_{\beta}^xf\|_{\infty}+\|\partial_t^i\partial_{\beta}^xf\|_{C^{0,\frac{k+\alpha-mi-|\beta|}{m}}(\bar{\Omega}\times[a,b])}\right)\\
&+\sum_{mi+|\beta|=k}\|\partial_t^i\partial_{\beta}^xf\|_{C^{\alpha,0}(\bar{\Omega}\times[a,b])}.
\end{align*}
Hereby, we denote by $\partial^x_{\beta}$ a partial derivative in space with respect to the multi-index $\beta$ and $\partial^i_t$ the $i$-th partial derivative in time. The parameter $m$ corresponds to the order of the differential equation one is considering and in our work it will always be four. Parabolic H\"older spaces on submanifolds can now be defined in local coordinates.
\begin{definition}[Parabolic H\"older spaces on submanifolds]\label{DefinitionParabolichHspacesmfd}\ \\
	Let $\Gamma$ be a $C^{r}$-submanifold of $\R^n$. Then we define for $k\in\N_0, k<r, \alpha\in(0,1), a,b\in\R,m \in\N$ the space $C^{k+\alpha,\frac{k+\alpha}{m}}(\Gamma\times[a,b])$ as the set of all functions $f:\Gamma\to\R$ such that for any parametrization $\varphi:\Omega\to V\subset\Gamma$ we have that $f\circ\varphi\in C^{k+\alpha,\frac{k+\alpha}{m}}(\bar{\Omega}\times[a,b])$. 
\end{definition}
\begin{remark}[Traces of parabolic H\"older spaces]\label{RemarkTracesParabolicHspaces}\ \\
	On the boundary $\Sigma$ of $\Gamma$ we may choose $\varphi$ to be a parametrization that flattens the boundary. From this we see that
	\begin{align*}
	f\in C^{k+\alpha,k'+\alpha'}(\Gamma\times[a,b])\Rightarrow f\big|_{\Sigma\times[a,b]}\in C^{k+\alpha,k'+\alpha'}(\Sigma\times[a,b]).
	\end{align*}
\end{remark}
For the non-linear analysis we will the need the following product estimate and the contractivity property of lower order terms.
\begin{lemma}[Product estimates in parabolic H\"older spaces]\label{LemmaProductEstimatesParabolichHspace}\ \\
	Let $k,m\in\N,\alpha\in(0,1)$ and $f,g\in C^{k+\alpha,\frac{k+\alpha}{m}}(\overline{\Omega}\times[0,T])$. Then we have 
	\begin{align}\label{EquationProductareClosedinHolderSpaces}
	fg\in C^{k+\alpha,\frac{k+\alpha}{m}}(\overline{\Omega}\times[0,T]),
	\end{align}
	and furthermore we have that
	\begin{align}
	\|fg\|_{C^{k+\alpha,\frac{k+\alpha}{m}}(\overline{\Omega}\times[0,T])}&\le C \|f\|_{C^{k+\alpha,\frac{k+\alpha}{m}}(\overline{\Omega}\times[0,T])}\|g\|_{C^{k+\alpha,\frac{k+\alpha}{m}}(\overline{\Omega}\times[0,T])},\label{EquationProductEstimatesHolderSpaces1}\\
	\|fg\|_{C^{k+\alpha,\frac{k+\alpha}{m}}(\overline{\Omega}\times[0,T])}&\le C\left(\|f\|_{C^{k+\alpha,\frac{k+\alpha}{m}}(\overline{\Omega}\times[0,T])}\|g\|_{C^{k,0}}+\|f\|_{C^{k,0}}\|g\|_{C^{k+\alpha,\frac{k+\alpha}{m}}(\overline{\Omega}\times[0,T])}\right).\label{EquationProductEstimatesHolderSpaces2}
	\end{align}
\end{lemma}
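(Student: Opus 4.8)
The plan is to reduce the statement to the well-known scalar product estimates for isotropic parabolic Hölder spaces, since all the function-space norms here are defined via finitely many coordinate charts and the parabolic Hölder spaces on submanifolds are defined chartwise. So the whole lemma really concerns functions on a cylinder $\overline{\Omega}\times[0,T]$, and once we have the estimates there, the submanifold versions follow by summing over the covering charts, using that the transition maps are sufficiently regular (they are $C^r$ with $r>k$) so that composition and the chain rule preserve the relevant Hölder classes.

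First I would recall the characterization of $C^{k+\alpha,\frac{k+\alpha}{m}}(\overline{\Omega}\times[0,T])$ as the space of functions $f$ such that each mixed derivative $\partial_t^i\partial_x^\beta f$ with $mi+|\beta|\le k$ lies in $C^{\alpha,0}\cap C^{0,\frac{k+\alpha-mi-|\beta|}{m}}$, and collect the three elementary facts that drive everything: (a) $C^{k+\alpha,\frac{k+\alpha}{m}}$ is a Banach algebra under pointwise multiplication on a bounded cylinder, i.e. $\|fg\|\le C\|f\|\|g\|$; (b) the Leibniz rule distributes any admissible mixed derivative $\partial_t^i\partial_x^\beta$ of a product into a finite sum of products of lower-order derivatives of the factors; (c) the classical interpolation/product inequalities for scalar Hölder seminorms, namely that for $0<\sigma<1$ one has $\langle uv\rangle_\sigma \le \|u\|_\infty\langle v\rangle_\sigma + \langle u\rangle_\sigma\|v\|_\infty$ and the corresponding parabolic-time analogue, together with the interpolation inequality controlling intermediate derivatives by $\|\cdot\|_{C^{k,0}}$ and the top seminorm. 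Fact (a) already gives \eqref{EquationProductareClosedinHolderSpaces} and \eqref{EquationProductEstimatesHolderSpaces1}; the content of \eqref{EquationProductEstimatesHolderSpaces2} is the sharper splitting where one factor is measured only in the lower-order norm $\|\cdot\|_{C^{k,0}}$.

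The main step is therefore \eqref{EquationProductEstimatesHolderSpaces2}. Here I would expand $\partial_t^i\partial_x^\beta(fg)$ by Leibniz into $\sum \binom{\cdots}{\cdots}(\partial_t^{i_1}\partial_x^{\beta_1}f)(\partial_t^{i_2}\partial_x^{\beta_2}g)$ with $mi_1+|\beta_1|+mi_2+|\beta_2|=mi+|\beta|$, and in each term estimate the sup-norm and the two parabolic seminorms. For a term where the derivative falls mostly on $f$, one bounds $\partial_t^{i_1}\partial_x^{\beta_1}f$ by $\|f\|_{C^{k+\alpha,\frac{k+\alpha}{m}}}$ and the complementary derivative of $g$ by $\|g\|_{C^{k,0}}$ — but to make the Hölder (fractional) part of the norm close up, one needs an interpolation inequality that trades a bit of regularity: a factor sitting strictly below order $k$ can be estimated in the fractional norm by $\|g\|_{C^{k,0}}$ only after invoking that intermediate derivatives of $g$ interpolate between $\|g\|_{C^{k,0}}$ and $\langle D^k g\rangle_\alpha$, which are both controlled by the two norms appearing on the right-hand side. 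Symmetrically for terms where the derivative falls mostly on $g$. Summing the finitely many Leibniz terms gives \eqref{EquationProductEstimatesHolderSpaces2}. The main obstacle — really the only non-bookkeeping point — is handling the ``balanced'' terms in the Leibniz expansion, where neither factor carries a full $k$ derivatives: there one must distribute the available fractional regularity $\alpha$ and the available order between the two factors via parabolic interpolation so that the product of the two resulting quantities is bounded by one of the two admissible products on the right; this is where one uses that the cylinder is bounded (so lower-order norms dominate even-lower-order ones) and the standard convexity/interpolation inequalities for parabolic Hölder spaces.

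Finally I would transfer the result to the submanifold setting: given $f,g\in C^{k+\alpha,\frac{k+\alpha}{m}}(\Gamma\times[a,b])$ and a parametrization $\varphi:\Omega\to V\subset\Gamma$, the compositions $f\circ\varphi$ and $g\circ\varphi$ lie in $C^{k+\alpha,\frac{k+\alpha}{m}}(\overline{\Omega}\times[a,b])$ by definition, their product $(fg)\circ\varphi=(f\circ\varphi)(g\circ\varphi)$ is again in that space by the cylinder version just proved, hence $fg\in C^{k+\alpha,\frac{k+\alpha}{m}}(\Gamma\times[a,b])$; summing over a finite covering family of charts and using equivalence of the resulting norms (as in Remark on equivalent norms, the transitions being $C^r$) yields \eqref{EquationProductEstimatesHolderSpaces1} and \eqref{EquationProductEstimatesHolderSpaces2} on $\Gamma$ as well. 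I would remark that the constant $C$ depends on $k,m,\alpha$, on $T$ (through the diameter of the time interval), and, in the manifold case, on the chosen atlas and the $C^r$-norms of the transition maps, but not on $f,g$.
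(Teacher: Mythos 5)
Your argument is correct and is the standard one: the paper itself gives no proof of this lemma (it only cites Lemma 2.16 of the second author's dissertation), and the route you describe --- Leibniz expansion of $\partial_t^i\partial^x_\beta(fg)$, the elementary seminorm product rule $\langle uv\rangle_\sigma\le\|u\|_\infty\langle v\rangle_\sigma+\langle u\rangle_\sigma\|v\|_\infty$ in space and in time, and parabolic interpolation to handle the balanced Leibniz terms in which neither factor carries top order (the only genuinely non-trivial point for \eqref{EquationProductEstimatesHolderSpaces2}, and the one you correctly single out) --- is exactly the expected proof. Note only that the lemma is stated on the cylinder $\overline{\Omega}\times[0,T]$, so your final chartwise transfer to submanifolds is not needed for the statement itself, though it is harmless and is indeed how the result is used on $\Gamma_{\ast}$ later.
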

\begin{proof}
See \cite[Lemma 2.16]{goesswein2019Dissertation}
\end{proof}
\begin{lemma}[Contractivity property of lower order terms in parabolic H\"older spaces]\label{LemmaContractivityLowerOrderTerms}\ \\
	Let $\Omega\subset\R^n$ be a bounded domain with smooth boundary and $$k,k'\in\{0,1,2,3,4\},k'<k,\alpha\in(0,1),a,b\in\R.$$ Then, we have for any $f\in C^{k+\alpha,\frac{k+\alpha}{4}}(\bar{\Omega}\times[a,b])$ that
	\begin{align}
	\|f\|_{C^{k'+\alpha,\frac{k'+\alpha}{4}}(\bar{\Omega}\times[a,b])}\le \|f\big|_{t=a}\|_{C^{k'+\alpha}(\bar{\Omega})}+C(b-a)^{\bar{\alpha}}\|f\|_{C^{k+\alpha,\frac{k+\alpha}{4}}(\bar{\Omega}\times[a,b])}.
	\end{align}
	Hereby, the constants $C$ and $\bar{\alpha}$ depend on $\alpha,k,k'$ and $\bar{\Omega}$. Especially, if $f\big|_{t=a}\equiv 0$, we have
	\begin{align}\label{EquationContractivityofLowerOrderTerms}
	\|f\|_{C^{k'+\alpha,\frac{k'+\alpha}{4}}(\bar{\Omega}\times[a,b])}\le C(b-a)^{\bar{\alpha}}\|f\|_{C^{k+\alpha,\frac{k+\alpha}{4}}(\bar{\Omega}\times[a,b])}.
	\end{align}
\end{lemma}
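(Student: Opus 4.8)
The plan is to reduce the estimate to its two constituent pieces: the H\"older seminorm in space (of order $k'+\alpha$) and the H\"older seminorm in time (of order $\frac{k'+\alpha}{4}$), together with the sup-norms of the relevant derivatives. The underlying principle is that whenever one interpolates a derivative or a difference quotient that is \emph{strictly lower} than the top order $k+\alpha$ available in $f$, one gains a positive power of the time-interval length $(b-a)$. Concretely, for a function $g$ on $\bar\Omega\times[a,b]$ that vanishes at $t=a$ one has the elementary estimate $\|g\|_{\infty}\le (b-a)^{\beta}\langle g\rangle_{t,\beta}$ for any $\beta\in(0,1]$, and more generally $\langle g\rangle_{t,\beta'}\le (b-a)^{\beta-\beta'}\langle g\rangle_{t,\beta}$ for $\beta'<\beta$; the spatial seminorms are controlled similarly after writing $\langle\partial_\beta^x g\rangle_{x,\alpha}$ in terms of a lower-order spatial derivative of $g-g|_{t=a}$ whose time-modulus of continuity we then exploit. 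The general case $f|_{t=a}\not\equiv 0$ follows by applying the vanishing case to $f - f|_{t=a}$ (extended constantly in time) and noting that $f|_{t=a}$, viewed as a time-independent function, contributes exactly $\|f|_{t=a}\|_{C^{k'+\alpha}(\bar\Omega)}$ to the left-hand side and nothing to the gained term.

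First I would set $g:=f-f|_{t=a}$, so $g|_{t=a}\equiv 0$ and $g\in C^{k+\alpha,\frac{k+\alpha}{4}}(\bar\Omega\times[a,b])$ with norm bounded by $C\|f\|_{C^{k+\alpha,\frac{k+\alpha}{4}}}$. It suffices to prove \eqref{EquationContractivityofLowerOrderTerms} for $g$. I would then go through the defining terms of $\|g\|_{C^{k'+\alpha,\frac{k'+\alpha}{4}}}$ one by one. For the sup-norms $\|\partial_t^i\partial_\beta^x g\|_\infty$ with $4i+|\beta|\le k'$: since $4i+|\beta|\le k'<k$, the derivative $\partial_t^i\partial_\beta^x g$ still has a nontrivial H\"older modulus in time of exponent $\frac{k+\alpha-4i-|\beta|}{4}>\frac{k-k'}{4}>0$ coming from membership in $C^{0,\frac{k+\alpha-4i-|\beta|}{4}}$, and it vanishes at $t=a$ (derivatives of $g$ in $x$ and in $t$ — using the equation only through the H\"older class, not differentiating the PDE — still vanish at the initial time since $g$ does); hence $\|\partial_t^i\partial_\beta^x g\|_\infty\le (b-a)^{\frac{k+\alpha-4i-|\beta|}{4}}\langle\partial_t^i\partial_\beta^x g\rangle_{t,\cdot}\le (b-a)^{\bar\alpha}\|g\|_{C^{k+\alpha,\frac{k+\alpha}{4}}}$ with $\bar\alpha:=\frac{k-k'}{4}$ (or any smaller positive number). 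For the time-H\"older seminorms $\|\partial_t^i\partial_\beta^x g\|_{C^{0,\frac{k'+\alpha-4i-|\beta|}{4}}}$ with $4i+|\beta|\le k'$: here I interpolate between the available exponent $\frac{k+\alpha-4i-|\beta|}{4}$ and the smaller target exponent $\frac{k'+\alpha-4i-|\beta|}{4}$, gaining the difference $\frac{k-k'}{4}=\bar\alpha$ as a power of $(b-a)$. For the top spatial seminorm $\langle\partial_\beta^x g\rangle_{x,\alpha}$ with $|\beta|=k'$: I write this as the spatial $C^\alpha$-seminorm of $\partial_\beta^x g$, which since $k'<k$ can be bounded by (a fixed power of) the $C^{1+\alpha}$-norm in space of a $k'$-th order spatial derivative — equivalently by $\|g\|_{C^{k'+1+\alpha,0}}$ — times a gained $(b-a)^{\bar\alpha}$ obtained exactly as above because $\partial_\beta^x g$ vanishes at $t=a$ and lies in a time-H\"older class with positive exponent; alternatively one bounds $\langle\partial^x_\beta g\rangle_{x,\alpha}\le \|\partial^x_\beta g\|_\infty^{1-\theta}\langle\partial^x_\beta g\rangle_{x,\alpha'}^{\theta}$ for suitable $\theta$ against a higher spatial seminorm and reuses the sup-norm gain. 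Summing the finitely many terms gives \eqref{EquationContractivityofLowerOrderTerms}, and adding back $f|_{t=a}$ gives the general inequality.

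The main obstacle — really the only nonroutine point — is bookkeeping the \emph{mixed} parabolic exponents correctly: one must verify in every case that when passing from order $k+\alpha$ down to $k'+\alpha$ the \emph{parabolic} distance $\frac{k-k'}{4}$ is genuinely available as a spare exponent, i.e. that no single term in the definition of the lower-order norm already saturates the regularity of $f$. This is where the strict inequality $k'<k$ is essential, and where one has to be careful that the spatial seminorm term $\langle\partial^x_\beta g\rangle_{x,\alpha}$ at $|\beta|=k'$ does not have an obvious time-decay (its $x$-regularity is at the top of the $k'$-scale), so the gain for that term must be routed through time-continuity of $g$ rather than through an unused $x$-derivative; the uniform-over-$\bar\Omega$ nature of all the constants is then immediate since $\bar\Omega$ is fixed and all estimates are pointwise in $x$ or use only the finitely many reference charts. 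I expect to phrase the time-gain lemma ($g|_{t=a}\equiv0\Rightarrow\|g\|_{C^{0,\beta'}}\le C(b-a)^{\beta-\beta'}\|g\|_{C^{0,\beta}}$ for $\beta'<\beta$) as a short standalone computation and then apply it mechanically to each of the finitely many seminorm summands, which is why no lengthy calculation is needed.
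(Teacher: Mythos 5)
Your proposal is correct and takes essentially the same route as the paper's proof (which is deferred to \cite[Lemma 2.17]{goesswein2019Dissertation}): subtract $f|_{t=a}$, gain the factor $(b-a)^{\bar{\alpha}}$ for the sup-norms and the time H\"older seminorms from the vanishing at $t=a$ together with the spare parabolic exponent $\frac{k-k'}{4}$, and handle the top spatial seminorm $\langle\partial^x_{\beta}g\rangle_{x,\alpha}$, $|\beta|=k'$, by interpolating it between the sup-norm (which carries the time gain) and the higher spatial regularity available since $k'<k$. Two harmless slips: the parenthetical claim that time derivatives of $g$ vanish at $t=a$ because $g$ does is false in general, but moot since $k'\le 3$ forces $i=0$ in every term of the $C^{k'+\alpha,\frac{k'+\alpha}{4}}$-norm, and the final $\bar{\alpha}$ must be taken as the minimum over all terms, since the interpolation step for the spatial seminorm only yields a gain of order $(1-\alpha)$ times the time exponent, which can be smaller than $\frac{k-k'}{4}$ -- consistent with the lemma, which lets $\bar{\alpha}$ depend on $\alpha$, $k$ and $k'$.
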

\begin{proof}
See \cite[Lemma 2.17]{goesswein2019Dissertation}
\end{proof}

As a final remark we want to mention that we sometimes identify  Sobolev and H\"older spaces (in local coordinates) with Besov spaces, especially to use interpolation and composition results. As we do not need them on manifolds we will not introduce them here but they can be found, e.g., in \cite[Section 2.3]{Triebel1994TheoryofFunctionSpace}.

\section{Parametrization, Compatibility Conditions, Main Result}\label{SectionParametrizationCom}
Similar to the classic approach for geometric flows of closed hypersurfaces with higher space dimensions, we want to write the evolution of the triple junction clusters as normal graphs over a fixed reference triple junction cluster $\Gamma_{\ast}:=\Gamma^1_{\ast}\cup\Gamma^2_{\ast}\cup\Gamma^3_{\ast}\cup\Sigma_{\ast}$. The classical approach is to write the evolution as a graph in normal direction over $\Gammaast$. Additionally, we need to allow a tangential part in a neighborhood of the triple junction to describe general motions of the geometric object. So, we want to describe the evolving hypersurface $\Gamma$ as image of the diffeomorphism
\begin{align}
\Phi^{i}_{\boldsymbol{\rho},\boldsymbol{\mu}}: \Gamma^i_{\ast}\times [0,T]&\to \R^{n+1},\notag\\
(\sigma,t)&\mapsto \sigma+\rho^i(\sigma,  t)\Nast^i(\sigma)+\mu^i(\sigma,t)\tauast^i(\sigma), \label{EquationNormalGraphTJ}
\end{align}
where $\tau^i_{\ast}$ are fixed, smooth tangential vector fields on $\Gamma^i_{\ast}$ that equal $\nuast^i$ on $\Sigmaast$ and have a support in a neighborhood of $\Sigmaast$ in $\Gamma^i_{\ast}$. The tuple $(\boldsymbol{\rho},\boldsymbol{\mu})$ consists of the unknown functions for which we want to derive a PDE system. Hereby, $\bmu$ has to be given as function in the normal part as otherwise the resulting PDE problem will be degenerated. We know from the work of \cite{depner2014mean} that the condition
\begin{align}\label{EquationConditonForConcurrencyofTJ}
\Phi^1_{\boldsymbol{\rho},\boldsymbol{\mu}}(\sigma,t)=\Phi^2_{\boldsymbol{\rho},\boldsymbol{\mu}}(\sigma,t)=\Phi^3_{\boldsymbol{\rho},\boldsymbol{\mu}}(\sigma,t) & &\text{for }\sigma\in\Sigma_{\ast}, t\ge 0,
\end{align}
which guarantees concurrency of the triple junction, is equivalent to
\begin{align}\label{EquationEquivalentCondtionforTripleJunctionConservation}
\begin{cases}
\gamma^1\rho^1+\gamma^2\rho^2+\gamma^3\rho^3=0 &\text{on }\Sigma_{\ast},\\
\boldsymbol{\mu}=\mathcal{T}\boldsymbol{\rho} &\text{on }\Sigma_{\ast}.
\end{cases}
\end{align}
Hereby, the matrix $\mathcal{T}$ is given by
\begin{align*}
\mathcal{T}=\begin{pmatrix}0 & \frac{c^2}{s^1} & -\frac{c^3}{s^1} \\
-\frac{c^1}{s^2} & 0 & \frac{c^3}{s^2} \\ \frac{c^1}{s^3} & -\frac{c^2}{s^3} & 0 \end{pmatrix},
\end{align*}
with $s^i=\sin(\theta^i)$ and $c^i=\cos(\theta^i)$. The second line in (\ref{EquationEquivalentCondtionforTripleJunctionConservation}) implies that the tangential part $\boldsymbol{\mu}$ is uniquely determined on $\Sigmaast$ by the values of $\boldsymbol{\rho}$. This motivates to get rid of the degenerated degrees of freedom of $\boldsymbol{\mu}$ by setting 
\begin{align}
\mu^i(\sigma):=\mu(\pr^i_{\Sigma}(\sigma)),
\end{align} where $\pr^i_{\Sigma}$ denotes the projection from a point on $\Gamma^i_{\ast}$ to the nearest point on $\Sigma_{\ast}$. Note that this map is only well-defined on a neighborhood of $\Sigmaast$ in $\Gammaast^i$. But we only need $\mu^i$ to be defined on the support of $\tau^i_{\ast}$, which can be assumed to be arbitrary small. Therefore, this tangential part can be used to find a well-defined PDE formulation.
\begin{remark}[Analytic aspects of the non-local term]\ \\
	One might expect that the included non-local term will have a huge impact on the existence analysis. Indeed, as we linearize our problem only in the reference geometry, the non-localities will be of lower order in the linearization and can thus be treated with perturbation theory. For the contraction estimates we can use the same arguments as for all other quasi-linear terms. In total, for the goal of this paper the non-localities will not cause any problems.
\end{remark}

Now, we want to find a suitable PDE-setting for $\boldsymbol{\rho}$. To begin with, we retract the equations from $\Gamma^i(t)$ on $\Gamma_{\ast}^i$. From here on, we will write $\Gamma_{\boldsymbol{\rho}}$ resp. $\Sigma_{\boldsymbol{\rho}}$ when referring to the triple junction cluster and the triple junction given as image of $\Phi_{\boldsymbol{\rho},\boldsymbol{\mu}}$. Also, we will use sub- and superscripts $i$ and $\boldsymbol{\rho}$ to denote pull-backs of quantities of the hypersurface $\Gamma_{\boldsymbol{\rho}}^i$ or of the triple junction $\Sigma_{\boldsymbol{\rho}}$.
This will also be applied on differential operators. So, for example, we will write for $(\sigma,t)\in\Gammaast^i\times[0,T], i=1,2,3,$
\begin{align*}
H^i_{\boldsymbol{\rho}}(\sigma,t)&:=H_{\Gamma^i_{\boldsymbol{\rho}}}(\Phi^i_{\boldsymbol{\rho},\boldsymbol{\mu}}(\sigma,t)),\\
\Delta_{\boldsymbol{\rho}}H_{\boldsymbol{\rho}}(\sigma,t)&:=\left(\Delta_{\Gamma^i_{\boldsymbol{\rho}}}H_{\Gamma^i_{\boldsymbol{\rho}}}\right)(\Phi^i_{\boldsymbol{\rho},\boldsymbol{\mu}}(\sigma,t)).
\end{align*}
Later, we will also sometimes use this notation when we consider the quantities on $\Gamma_{\boldsymbol{\rho}}$ but it will always be clear what is meant.\\
The evolution law \eqref{EquationMotionDuetoSurfaceDiffusion} together with the boundary conditions (CC), (AC), (CCP) and (FB) writes now as the following problem on $\Gammaast$.
\begin{align}(SDFTJ)\label{EquationSurfaceDiffusionTripleJunctionGeometricVersiononRrenceFrame}
\begin{cases}
V^i_{\boldsymbol{\rho}}=-\Delta_{\brho}H^i_{\brho} &\text{on }\Gamma^i_{\ast}, t\in[0,T], i=1,2,3,\\
\gamma^1\rho^1+\gamma^2\rho^2+\gamma^3\rho^3=0 &\text{on }\Sigma_{\ast}, t\in[0,T],\\
\langle N^1_{\brho},N_{\brho}^2\rangle=\cos(\theta^3) &\text{on }\Sigma_{\ast}, t\in [0,T],\\
\langle N^2_{\brho}, N^3_{\brho}\rangle=\cos(\theta^1) &\text{on }\Sigma_{\ast}, t\in[0,T], \\
\gamma^1H^1_{\brho}+\gamma^2H^2_{\brho}+\gamma^3H^3_{\brho}=0 &\text{on }\Sigma_{\ast}, t\in[0,T],\\
\nabla_{\brho}H^1_{\brho}\cdot\nu_{\brho}^1=\nabla_{\brho}H^2_{\brho}\cdot\nu^2_{\brho} &\text{on }\Sigma_{\ast}, t\in[0,T], \\
\nabla_{\brho}H^2_{\brho}\cdot\nu^2_{\brho}=\nabla_{\brho}H^3_{\brho}\cdot\nu_{\brho}^3 &\text{on }\Sigma_{\ast}, t\in[0,T],\\
(\rho^i(\sigma,0),\mu^i(\sigma,0))=(\rho_0^i, \mu_0^i) &\text{on }\Gamma_{\ast}^i\times \Sigma_{\ast}, i=1,2,3.
\end{cases}
\end{align}
Here, we assume that the initial surfaces are given as $\Gamma^i_0=\Gamma^i_{\rho_0^i,\mu_0^i}, i=1,2,3$ for $\boldsymbol{\rho}_0$ small enough in the $C^{4+\alpha}$-norm and $\boldsymbol{\mu}_0=\mathcal{T}\boldsymbol{\rho}_0$. This will then guarantee that the $\Gamma_0^i$ are indeed embedded hypersurfaces, cf. \cite[Remark 1]{depner2014mean} .\\
We want to mention that one can rewrite $\eqref{EquationSurfaceDiffusionTripleJunctionGeometricVersiononRrenceFrame}_1$ as a parabolic equation
\begin{align}
\partial_t\rho^i=\mathcal{K}^i(\rho^i, \boldsymbol{\rho}|_{\Sigma_{\ast}}),
\end{align}
where the operator $$\mathcal{K}^i:C^{4+\alpha,1+\alpha/4}(\Gamma^i_{\ast}\times[0,T])\times C^{4+\alpha,1+\alpha/4}(\Sigma_{\ast}\times[0,T])\to C^{\alpha,\alpha/4}(\Gammaast\times[0,T]) $$ contains both derivatives of $\rho^i$ and $\boldsymbol{\rho}|_{\Sigma_{\ast}}$ of up to order four. A detailed calculation can be found in \cite[Page 38f.]{goesswein2019Dissertation}.\\
The boundary condition can be rewritten as operators $$\mathcal{G}^i: C^{4+\alpha,1+\alpha/4}_{TJ}(\Gammaast\times[0,T])\to C^{4-\sigma_i+\alpha,\frac{4-\sigma_i+\alpha}{4}}(\Sigmaast\times[0,T])$$ given by
\begin{align*}
\mathcal{G}^1(\boldsymbol{\rho})&:=\gamma^1\rho^1+\gamma^2\rho^2+\gamma^3\rho^3=0   &\text{ on }\Sigma_{\ast}\times[0,T],\\
\mathcal{G}^2(\boldsymbol{\rho})&:=\langle N^1_{\brho}, N^2_{\brho}\rangle-\cos(\theta^3)=0  &\text{ on }\Sigma_{\ast}\times[0,T],\\
\mathcal{G}^3(\boldsymbol{\rho})&:=\langle N^2_{\brho},N^3_{\brho}\rangle-\cos(\theta^1)=0  &\text{ on }\Sigma_{\ast}\times[0,T],\\
\mathcal{G}^4(\boldsymbol{\rho})&:=\gamma^1H^1_{\brho}+\gamma^2H^2_{\brho}+\gamma^3H^3_{\brho}=0   &\text{ on }\Sigma_{\ast}\times[0,T],\\
\mathcal{G}^5(\boldsymbol{\rho})&:=\nabla_{\brho}H^1_{\brho}\cdot\nu^1_{\brho}-\nabla_{\brho}H^2_{\brho}\cdot \nu^2_{\brho}=0    &\text{ on }\Sigma_{\ast}\times[0,T],\\
\mathcal{G}^6(\boldsymbol{\rho})&:=\nabla_{\brho}H^2_{\brho}\cdot\nu^2_{\brho}-\nabla_{\brho}H^3_{\brho}\cdot \nu^4_{\brho}=0    &\text{ on }\Sigma_{\ast}\times[0,T].
\end{align*}
With $\mathcal{G}(\boldsymbol{\rho}):=\big(\mathcal{G}^i(\boldsymbol{\rho})\big)_{i=1,...,6}$, \eqref{EquationSurfaceDiffusionTripleJunctionGeometricVersiononRrenceFrame} rewrites to the following  problem for $(\rho^1, \rho^2, \rho^3)$:
\begin{align}\label{EquationAnalyticFormulationofSDFTJ}
\begin{cases}
\partial_t\rho^i=\mathcal{K}^i(\rho^i, \boldsymbol{\rho}|_{\Sigma_{\ast}}) &\text{ on }\Gamma^i_{\ast}\times[0,T], i=1,2,3,\\
\mathcal{G}(\boldsymbol{\rho})=0 &\text{ on }\Sigma_{\ast}\times[0,T],\\
\rho^i(\cdot,0)=\rho^i_0 &\text{ on }\Sigma_{\ast}.
\end{cases}
\end{align}

As we seek for solutions smooth up to $t=0$, we will require some compatibility conditions for our initial data. Clearly, the boundary conditions given by $\mathcal{G}$ have to be fulfilled by the initial data. Additionally, for a smooth solution the conditions $\eqref{EquationSurfaceDiffusionTripleJunctionGeometricVersiononRrenceFrame}_1$ resp $\mathcal{G}^1(\brho)=0$ are differentiable in time at $t=0$ and so will also require a compatibility condition. Together, we get by considering \eqref{EquationSurfaceDiffusionTripleJunctionGeometricVersiononRrenceFrame} resp. \eqref{EquationAnalyticFormulationofSDFTJ} the corresponding compatibility conditions

 \begin{align}
 \label{EquationGeometricCompabilityConditionforSDFTJ}
 (GCC)&\begin{cases}
 \Gamma_0\text{ fulfils }(CC), (AC), (CCP), (FB) &\text{on } \Sigma_{0},\\
 \sum_{i=1}^3\gamma^i\Delta_{\Gamma_0^i} H_{\Gamma_0^i}=0 &\text{on }\Sigma_0.
 \end{cases}\\
 \label{EquationCompabilityConditionsAnalyticVersionofSFDTJ}
(ACC)&\begin{cases}\mathcal{G}(\boldsymbol{\rho}_0)=0 & \text{on }\Sigma_{\ast}, \\ \mathcal{G}_0(\boldsymbol{\rho}_0):=\sum_{i=1}^3\gamma^i\mathcal{K}^i(\rho_0^i, \boldsymbol{\rho}_0\big|_{\Sigma_{\ast}})=0 & \text{on }\Sigma_{\ast}.
\end{cases} 
\end{align}
Indeed, one can prove that $(ACC)$ and $(GCC)$ are equivalent provided that $\brho_0$ is small enough in the $C^{4+\alpha}(\Gammaast)$-norm, cf. \cite[Lemma 4.1]{goesswein2019Dissertation}. With these we can now state the main results of this article. 
\begin{theorem}[Short time existence for surface diffusion flow with triple junctions] \label{TheoremSTETripleJunctions}\ \\
	Let $\Gammaast$ be a $C^{5+\alpha}$-reference cluster. Then there exists an $\varepsilon_0>0$ and a $T>0$ such that for all initial data $\brho_0\in C^{4+\alpha}_{TJ}(\Gammaast)$ with $\|\brho_0\|_{C^{4+\alpha}_{TJ}(\Gammaast)}<\varepsilon_0$, which fulfill the analytic compatibility conditions (\ref{EquationCompabilityConditionsAnalyticVersionofSFDTJ}), there exists a unique solution $\boldsymbol{\rho}\in C^{4+\alpha,1+\frac{\alpha}{4}}_{TJ}(\GammaastT)$ of (\ref{EquationAnalyticFormulationofSDFTJ}). Additionally, the norm of $\brho$ is bounded by a constant $R$ uniformly in $\brho_0$.
\end{theorem}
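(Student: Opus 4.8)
The plan is the classical linearize--invert--contract scheme, adapted to the fourth order system with the nonlocal, mixed order boundary conditions at the triple junction. First I would freeze coefficients at the reference cluster, i.e.\ expand $\mathcal{K}^i$ and $\mathcal{G}$ around $\brho=0$. The principal part of $\mathcal{K}^i$ is (up to a positive factor $a^i$) the operator $-\Delta_{\ast}^2$, since $H^i_{\brho}$ linearizes to $\Delta_{\ast}\rho^i$ plus lower order terms; the linearized boundary operator $\mathcal{G}'(0)$ retains the mixed orders $\sigma_1,\dots,\sigma_6=0,1,1,2,3,3$ and couples the three hypersurfaces through $\Sigmaast$; and everything else --- lower order parts, curvature terms, and the genuinely nonlocal contributions coming from $\bmu=\mathcal{T}\brho$ --- goes into inhomogeneities $f^i$ and boundary data $\boldsymbol{g}$. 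The linear statement to be proven is that
\[
\brho \;\longmapsto\; \bigl( (\partial_t\rho^i + a^i\Delta_{\ast}^2\rho^i)_{i},\; \mathcal{G}'(0)\brho,\; \brho(\cdot,0)\bigr)
\]
is an isomorphism from $C^{4+\alpha,1+\alpha/4}_{TJ}(\GammaastT)$ onto the subspace of $C^{\alpha,\alpha/4}_{TJ}(\GammaastT)\times\prod_{i=1}^{6}C^{4-\sigma_i+\alpha,\frac{4-\sigma_i+\alpha}{4}}(\SigmaastT)\times C^{4+\alpha}_{TJ}(\Gammaast)$ cut out by the compatibility conditions.

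Second, to establish this isomorphism I would follow the three-step programme announced in the introduction, because the coupled boundary conditions prevent a direct appeal to Solonnikov-type parabolic Schauder theory. (a) Pass to a weak, $H^{-1}$-type formulation of the homogeneous problem and prove existence and uniqueness of weak solutions by a Galerkin/Lax--Milgram argument together with energy estimates, the boundary terms on $\Sigmaast$ being absorbed via the Ehrling-type inequality in Proposition~\ref{LemmaHilfslemmafurEnergieabschatzungschwacheLt}. (b) Localize near interior points, near the triple junction, and in the transition region; on each localized model problem --- a fourth order equation on a half space or a wedge with the model boundary operator --- prove parabolic Schauder estimates, after checking by hand that the model boundary value problem satisfies the Lopatinskii--Shapiro complementing condition (this is where the explicit form of $\mathcal{T}$ and Young's law enter). (c) Glue the localized estimates with the weak solvability through a partition of unity and a compactness argument to obtain the global a priori Schauder estimate, hence the isomorphism for the principal part; then reinsert the lower order terms, the inhomogeneities and nonzero initial data by a Neumann series / perturbation argument. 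It is at this stage that the analytic compatibility conditions \eqref{EquationCompabilityConditionsAnalyticVersionofSFDTJ} are needed, so that the data actually lie in the correct subspace.

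Third, with the linear isomorphism $\mathcal{L}^{-1}$ in hand I would recast \eqref{EquationAnalyticFormulationofSDFTJ} as a fixed point equation. Write $\mathcal{K}^i=(\mathcal{K}^i)'(0)+\mathcal{N}_K^i$ and $\mathcal{G}=\mathcal{G}'(0)+\mathcal{N}_G$, so that $\brho$ solves \eqref{EquationAnalyticFormulationofSDFTJ} iff $\brho=\mathcal{L}^{-1}\bigl(\mathcal{N}_K(\brho),\,-\mathcal{N}_G(\brho),\,\brho_0\bigr)=:\mathcal{F}(\brho)$. On the ball $\{\|\brho\|_{C^{4+\alpha,1+\alpha/4}_{TJ}(\GammaastT)}\le R\}$ with $\|\brho_0\|_{C^{4+\alpha}_{TJ}(\Gammaast)}<\varepsilon_0$ one shows $\mathcal{F}$ maps the ball into itself and is a contraction: the remainders $\mathcal{N}_K,\mathcal{N}_G$ are of lower order relative to the principal part and, by \eqref{EquationCompabilityConditionsAnalyticVersionofSFDTJ}, their time traces at $t=0$ vanish, so Lemma~\ref{LemmaContractivityLowerOrderTerms} gains a factor $T^{\bar{\alpha}}$, while Lemma~\ref{LemmaProductEstimatesParabolichHspace} together with standard Hölder composition estimates controls the nonlinear (and nonlocal) dependence on $\brho$. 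Choosing first $R$, then $\varepsilon_0$, then $T$ sufficiently small makes both constants admissible; Banach's fixed point theorem yields the unique $\brho\in C^{4+\alpha,1+\alpha/4}_{TJ}(\GammaastT)$ in the ball with $\|\brho\|\le R$ uniformly in $\brho_0$, and $T,\varepsilon_0$ depend only on $\Gammaast$. Uniqueness in the full space follows because for $T$ small any solution with the prescribed small initial datum is automatically contained in the ball (by continuity in time and the contraction estimate).

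\textbf{Main obstacle.} The crux is step (b)--(c): establishing the parabolic Schauder theory, and in particular the isomorphism property of $\mathcal{L}$, for the linearized problem with the coupled, mixed order, nonlocal boundary conditions at the triple junction. There is no off-the-shelf result; one must verify the complementing condition for the model wedge problem directly and then upgrade weak solvability to Hölder regularity via localization and compactness. By contrast the nonlocal term is harmless --- after linearizing only at the reference configuration it is of lower order --- and the fixed point step is routine given the product and contractivity lemmas.
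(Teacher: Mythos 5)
Your overall scheme is the same as the paper's: linearize at $\Gammaast$, obtain the linear solvability by a Galerkin weak theory with the Ehrling inequality, localization at the triple junction with an explicit verification of the Lopatinskii--Shapiro condition, Schauder estimates from the Solonnikov/Ladyzhenskaya theory glued to the weak solution by compactness, then restore lower order terms, inhomogeneities and general initial data by perturbation, and finally a Banach fixed point on a ball of radius $R$ with the parameters chosen in the order $R$, $\varepsilon_0$, $T$; you also correctly identify the linear theory as the main obstacle. One small structural point: a direct $H^{-1}$-type weak formulation of the fourth order problem cannot accommodate all six boundary conditions, because the sum condition on $\Sigmaast$ removes a degree of freedom in the testing procedure; the paper therefore first splits the equation into the coupled second order system \eqref{EquationLinearisedSDFTJWeakPart} via the auxiliary variable $v^i=-\Dast u^i+C_vu^i$ and runs the Galerkin argument for the pair $(\bu,\bv)$.

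The genuine gap is in your contraction step. You assert that the remainders $\mathcal{N}_K,\mathcal{N}_G$ are of lower order relative to the principal part and that, since their time traces vanish at $t=0$, Lemma \ref{LemmaContractivityLowerOrderTerms} yields a factor $T^{\bar\alpha}$. For the angle conditions $\mathcal{G}^2,\mathcal{G}^3$ the remainder is \emph{not} of lower order: these conditions are fully nonlinear in the first derivatives, so the difference $\mathcal{N}_G(\bu)-\mathcal{N}_G(\bw)$ contains first order derivatives of $\bu-\bw$, i.e.\ terms of exactly the critical order for the norm $C^{3+\alpha,\frac{3+\alpha}{4}}(\Sigmaastdelta)$; similarly, in $\mathcal{K}^i$ the quasilinear coefficient differences multiply fourth derivatives of $\bu-\bw$. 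In both cases the coefficients at $t=0$ equal their values at $\brho_0$, which are only $O(\varepsilon)$ and not zero, so no choice of $T$ alone produces the required smallness, and the contractivity lemma cannot be applied in the same norm in which the remainder must be measured. This is precisely where the hypothesis $\|\brho_0\|_{C^{4+\alpha}_{TJ}(\Gammaast)}<\varepsilon_0$ is used: the paper writes the difference of the angle operators through the representation $N^{12}$ of Lemma \ref{LemmaRepresentationN12} and splits the mean-value coefficients into a part vanishing at $t=0$ (contributing $C(\Gammaast,R)\delta^{\alpha/4}$) and a part bounded by $C(\Gammaast)\varepsilon$ -- a Lunardi-type argument for fully nonlinear boundary conditions -- and treats the quasilinear bulk terms analogously, arriving at contraction constants of the form $C(\Gammaast,R)\dalpha+C(\Gammaast)\varepsilon$ (Lemma \ref{LemmaContractionEstimatesTripleJunction}). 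Since you do choose $R$, then $\varepsilon_0$, then $T$, the conclusion can be salvaged, but the justification as written (``lower order plus $T^{\bar\alpha}$'') would fail for exactly these terms and must be replaced by this coefficient-splitting argument.
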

\begin{remark}[Results for the geometric problem]\ \\ If the initial data are of class $C^{5+\alpha}$ we can use them itself as a reference cluster. In particular, Theorem \ref{TheoremSTETripleJunctions} implies existence of the original geometric problem given by \ref{EquationMotionDuetoSurfaceDiffusion} together with $(CC), (AC), (CCP), (FB)$ for such initial data. For initial data of class $C^{4+\alpha}$ one needs to verify that one can construct close enough reference frames. For the closed case, this was proven in \cite{prusssimmonett2012manifoldofclosedhypersurfaces}. The uniqueness of solutions of the geometric problem remains an open question.
\end{remark}
\section{Linearization} 
\label{SectionLinearization}
Now we want to linearize problem \eqref{EquationSurfaceDiffusionTripleJunctionGeometricVersiononRrenceFrame} in an arbitrary $C^{5+\alpha}$-reference frame $\Gammaast$. This is done pointwise in every $p\in\Gamma_{\ast}\cup\Sigma_{\ast}$. We want to explain what we mean precisely with our linearization procedure. For any fixed point $\sigma\in\Gamma^i_{\ast},i=1,2,3$ or $\sigma\in\Sigma_{\ast}$, any term in $\eqref{EquationSurfaceDiffusionTripleJunctionGeometricVersiononRrenceFrame}$, which are all given as functions in $\sigma, \boldsymbol{\rho}$ and $\boldsymbol{\mu}$, and any tuple 
\begin{align*}
(\boldsymbol{u}, \boldsymbol{\phi})\in \left(C^{4+\alpha}(\Gamma^1_{\ast})\times C^{4+\alpha}(\Gamma^2_{\ast})\times C^{4+\alpha}(\Gamma^3_{\ast})\right)\times \left(C^{4+\alpha}(\Sigma_{\ast})\right)^3,
\end{align*}
fulfilling (\ref{EquationEquivalentCondtionforTripleJunctionConservation}), we replace $\rho^i$ with $\varepsilon u^i$ and $\mu^i$ with $\varepsilon\phi^i$, differentiate the new expression in $\varepsilon$ and evaluate this for $\varepsilon=0$. Hereby, we observe that (\ref{EquationEquivalentCondtionforTripleJunctionConservation}) is also fulfilled for $(\varepsilon \boldsymbol{u}, \varepsilon\boldsymbol{\phi})$ due to its linear structure. Additionally, all terms are well defined for $(\boldsymbol{u},\boldsymbol{\phi})$ small enough in the $C^{4+\alpha}$-norm for every $t$, so at least for $\varepsilon$ small enough.
\begin{remark}[Independence of local coordinates]\label{RemarkOnLinearisationofSDFTJ}\ \\
	The goal of this process is to derive a linear equation on $\Gamma_{\ast}$. Our procedure, though, will lead to equations in local coordinates. But as we linearize geometric quantities that are itself independent of local coordinates we conclude that the equations we derive have to represent a global equation on $\Gammaast$. We will get this form for the terms of highest order and all other terms will be dealt with by using perturbation arguments.
\end{remark}
In the following,  we will index a geometric quantity with $\varepsilon$ to denote the quantity on $\Gamma_{\varepsilon u,\varepsilon \phi}$ at the point $\Phi_{\varepsilon u,\varepsilon \phi}(\sigma,t)$. We will omit the fixed time and space variable $(\sigma,t)$ and also the projection in the $\boldsymbol{\phi}$-terms. For the analysis we will do later it is not important to know the lower order terms precisely. Thus, we will denote them only in qualitative form using dynamical coefficient functions $a^{k+s}$, which denotes some function on the corresponding hypersurface $\Gamma^i_{\ast}$ that has $C^{k+s}$-regularity on this surface. Also, like dynamical constants the $a^{k+s}$ may adapt from line to line. \\
Luckily for us the most critical part was already done in \cite[Section 3]{depner2013linearized} and \cite[Section 3]{depner2014mean}, where the authors proved that
\begin{align}
\frac{d}{d\varepsilon}V^i_{\varepsilon}\big|_{\varepsilon=0}&=\partial_t u^i,\label{EquationLinearizV}\\
\frac{d}{d\varepsilon}H^i_{\varepsilon}\big|_{\varepsilon=0}&=\Dast u^i+|\Pi_{\ast}^i|^2u^i+\langle\gast H^i,\tauast^i\rangle\phi^i, \label{EquationLinearizH}\\
\frac{d}{d\varepsilon}\langle N^i_{\varepsilon}, N^j_{\varepsilon}\rangle\big|_{\varepsilon=0}&=\partialnuasta{i}u^i+\Pi^i_{\ast}(\nuast^i,\nuast^i)\phi^i-\partialnuasta{j}u^j-\Pi_{\ast}^j(\nuast^j,\nuast^j)\phi^j.  \label{EquationLinearizAC}
\end{align} 
Indeed, with \eqref{EquationLinearizV}-\eqref{EquationLinearizAC} we can already determine all terms of highest order in the linearization. As all others terms will be dealt with using perturbation arguments, we will only need qualitative results for their structure, especially the regularity of their coefficients. For this we need the following result on the linearization of the basic geometric quantities. 
\begin{lemma}[Linearization of basic geometric quantities]\label{LemmaLinearisierungSDFTJErstesHilfslemma}\label{LemmaSurfaceDiffusionTriplePointsLinearisationGeometricQuantities}\vspace{-0,2cm}
	\begin{align}
	\frac{d}{d\varepsilon}\partial_k^{i,\varepsilon}\big|_{\varepsilon=0}&=a^{3+\alpha}u^i+a^{3+\alpha}\phi^i+a^{4+\alpha}\partial_k u^i+a^{4+\alpha}\partial_k \phi^i, \\
	\frac{d}{d\varepsilon}g_{kl}^{i,\varepsilon}\big|_{\varepsilon=0}&=a^{3+\alpha}u^i+a^{3+\alpha}\phi^i+a^{4+\alpha}\partial_k \phi^i+a^{4+\alpha}\partial_l \phi^i,\\
	\frac{d}{d\varepsilon}g^{kl}_{i,\varepsilon}\big|_{\varepsilon=0}&=a^{3+\alpha}u^i+a^{3+\alpha}\phi^i+\sum_{j=1}^na^{4+\alpha}\partial_j\phi^i,\\
	\frac{d}{d\varepsilon}(\Gamma_{kl}^{m})^{i,\varepsilon}\big|_{\varepsilon=0}&=a^{2+\alpha}u^i+a^{2+\alpha}\phi^i+\sum_ja^{3+\alpha}\partial_j u^i+\sum_{j}a^{3+\alpha}\partial_j \phi^i+\sum_{j,j'}a^{4+\alpha}\partial_{jj'}\phi^i. \\
		\frac{d}{d\varepsilon}N^i_{\varepsilon}\big|_{\varepsilon=0}   &=\sum_{l=1}^n\big(a^{4+\alpha}\partial_l u^i+a^{4+\alpha}\partial_l\phi^i\big)+a^{3+\alpha}u^i+a^{3+\alpha}\phi^i,\\
	\frac{d}{d\varepsilon}\nu^i_{\varepsilon}\big|_{\varepsilon=0} &=\sum_{l=1}^n\left(a^{4+\alpha}\partial_lu^i+a^{4+\alpha}\partial_{l}\phi^i\right)+a^{3+\alpha}u^i+a^{3+\alpha}\phi^i.
	\end{align}
\end{lemma}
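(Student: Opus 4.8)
The plan is to compute each linearization directly in local coordinates by differentiating the coordinate expressions of the geometric quantities with respect to $\varepsilon$ at $\varepsilon = 0$, starting from the parametrization $\Phi^i_{\varepsilon\bu,\varepsilon\bphi}(\sigma,t) = \sigma + \varepsilon u^i(\sigma,t)\Nast^i(\sigma) + \varepsilon\phi^i(\sigma,t)\tauast^i(\sigma)$, and then keeping track only of the regularity of the coefficient functions (hence the dynamical symbols $a^{k+s}$) rather than their precise form. The key observation that makes the bookkeeping work is that $\Gammaast$ is of class $C^{5+\alpha}$, so quantities built from $\Phi_\ast$ and up to one derivative of it (such as $\partial_k^\ast$, $g_{kl}^\ast$, $\Nast^i$) are of class $C^{4+\alpha}$, Christoffel symbols involve a further derivative and are of class $C^{3+\alpha}$, etc. Each $\varepsilon$-derivative produces a term linear in $(u^i,\phi^i)$ and their spatial derivatives, with coefficients that are polynomial expressions in the reference-geometry data; the worst-regularity factor in each such expression determines the $a^{k+s}$ that is displayed.

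The steps, in order, would be: (i) Differentiate $\partial_k^{i,\varepsilon} = \partial_k\Phi^i_{\varepsilon\bu,\varepsilon\bphi}$; since $\partial_k\Phi^i_{\varepsilon\bu,\varepsilon\bphi} = \partial_k\sigma + \varepsilon(\partial_k u^i\,\Nast^i + u^i\,\partial_k\Nast^i + \partial_k\phi^i\,\tauast^i + \phi^i\,\partial_k\tauast^i)$, differentiation in $\varepsilon$ is immediate and gives exactly the claimed form, with the $\partial_k u^i$, $\partial_k\phi^i$ terms carrying the smooth $C^{4+\alpha}$ factors $\Nast^i,\tauast^i$ and the $u^i,\phi^i$ terms carrying the $C^{3+\alpha}$ factors $\partial_k\Nast^i,\partial_k\tauast^i$. (ii) Since $g_{kl}^{i,\varepsilon} = \langle \partial_k^{i,\varepsilon},\partial_l^{i,\varepsilon}\rangle$, apply the product rule to step (i); the first-order derivative terms $\partial_k u^i$ cancel because $\partial_k\sigma \perp \Nast^i$ would need $u^i$-terms — more carefully, $\langle\partial_k\sigma,\Nast^i\rangle = 0$, which kills the $\partial_k u^i$-contribution to $g_{kl}$ and leaves only $\partial_k\phi^i,\partial_l\phi^i$ at top order (since $\langle\partial_k\sigma,\tauast^i\rangle \neq 0$ in general), matching the stated formula. (iii) For $g^{kl}_{i,\varepsilon}$ use $\frac{d}{d\varepsilon}g^{kl} = -g^{km}g^{ln}\frac{d}{d\varepsilon}g_{mn}$ together with step (ii) and the smoothness of $g^{\ast,km}$; the contraction over indices produces the sum $\sum_j a^{4+\alpha}\partial_j\phi^i$. (iv) For the Christoffel symbols, differentiate the standard formula $\Gamma^m_{kl} = \tfrac12 g^{mp}(\partial_k g_{lp} + \partial_l g_{kp} - \partial_p g_{kl})$; this introduces one more spatial derivative than step (ii), hence second derivatives $\partial_{jj'}\phi^i$ at top order and a drop in coefficient regularity to $C^{2+\alpha}$ (from differentiating the $C^{3+\alpha}$ coefficients in $\frac{d}{d\varepsilon}g_{kl}$), and also $\partial_j u^i$-terms now survive because $\partial_k g_{lp}$ reintroduces them. (v) For $N^i_\varepsilon$, use that the unit normal is characterized by $\langle N^i_\varepsilon,\partial_k^{i,\varepsilon}\rangle = 0$ and $|N^i_\varepsilon| = 1$; differentiating these in $\varepsilon$ gives a linear system for $\frac{d}{d\varepsilon}N^i_\varepsilon$ whose solution is a combination of $\frac{d}{d\varepsilon}\partial_k^{i,\varepsilon}$ contracted with smooth reference data, producing top-order terms $a^{4+\alpha}\partial_l u^i, a^{4+\alpha}\partial_l\phi^i$. (vi) Finally $\nu^i_\varepsilon$, the outer conormal, is the unit tangent vector orthogonal to $T\Sigma_\varepsilon$ inside $T\Gamma^i_\varepsilon$; express it via $N^i_\varepsilon$ and the tangent to the triple junction and differentiate, reducing to steps (i) and (v).

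The main obstacle I anticipate is not any single computation but the careful tracking of \emph{which derivative terms cancel} at top order — specifically identifying that $\partial_k u^i$ drops out of $\frac{d}{d\varepsilon}g_{kl}^{i,\varepsilon}$ (because the reference tangent vectors are orthogonal to $\Nast^i$) while it reappears in $\frac{d}{d\varepsilon}(\Gamma^m_{kl})^{i,\varepsilon}$ (after a further differentiation). Getting these cancellations right is what distinguishes the correct statement from a naive one, and it requires being attentive to the algebraic structure rather than merely the regularity count. The regularity bookkeeping itself, by contrast, is routine once one fixes the convention that a $p$-th order spatial derivative of $C^{5+\alpha}$ reference data lies in $C^{5+\alpha-p}$ and that products of such are controlled by the least regular factor (using Lemma \ref{LemmaProductEstimatesParabolichHspace} in the form that $C^{k+\alpha}$ is a Banach algebra). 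Since the underlying linearizations \eqref{EquationLinearizV}--\eqref{EquationLinearizAC} are already established in \cite{depner2013linearized,depner2014mean}, the present lemma is really a refinement recording the coefficient regularity needed for the perturbation arguments of Section \ref{SectionLinearization}, and the proof can legitimately be kept at the level of indicating the differentiation formulas and the resulting worst-case regularities.
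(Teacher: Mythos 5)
Your proposal is correct and follows the expected route: the paper itself only cites \cite[Lemma 4.5, 4.6]{goesswein2019Dissertation} for this lemma, and the argument there is precisely this direct differentiation of $\Phi^i_{\varepsilon\boldsymbol{u},\varepsilon\boldsymbol{\phi}}$ in local coordinates, followed by the product/inverse/Christoffel formulas and the orthogonality characterizations of $N^i_\varepsilon$ and $\nu^i_\varepsilon$, with the regularity bookkeeping driven by the $C^{5+\alpha}$ reference cluster. One small correction of wording: in the Christoffel computation the $\partial_j u^i$ terms arise from $\partial_k$ hitting the $a^{3+\alpha}u^i$ part of $\tfrac{d}{d\varepsilon}g^{i,\varepsilon}_{lp}\big|_{\varepsilon=0}$ rather than from the reference term $\partial_k g^\ast_{lp}$, but this does not affect the stated conclusion.
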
 
\begin{proof}
Cf. \cite[Lemma 4.5 und 4.6]{goesswein2019Dissertation}.	
\end{proof}
Now we can use that for our linearization procedure we have a product rule both for the geometric quantities and the differential operators, which can be seen by writing the latter ones in local coordinates. Thus, we conclude for the linearization of the right-hand side of $\eqref{EquationSurfaceDiffusionTripleJunctionGeometricVersiononRrenceFrame}_1$ that
\begin{align*}
\frac{d}{d\varepsilon}\left(\Delta_{\Gamma^i_{\varepsilon}}H^i_{\varepsilon}\right)\big|_{\varepsilon=0}&=\ddeps\left(\Delta_{\Gamma^i_{\varepsilon}}\right)\big|_{\epsilon=0}H^i_{\ast}+\Dast\ddeps\left(H^i_{\varepsilon}\right)\Stricheps=-\Dast\Dast u^i+loT,
\end{align*}  
where we used in the last equality \eqref{EquationLinearizH} and the fact that $H^i_{\ast}$ does not contain any derivative of $\bu$ and consequently the first summand is of lower order. Combined with \eqref{EquationLinearizV} we get for the linearization of $\eqref{EquationSurfaceDiffusionTripleJunctionGeometricVersiononRrenceFrame}_1$
\begin{align*}
\partial_t u^i=-\Dast\Dast u^i+\widetilde{\mathcal{A}}^i_{LOT}u^i+\bar{\mathcal{A}}^i_{LOT}\phi^i. 
\end{align*}
Hereby, the $\widetilde{\mathcal{A}}^i_{LOT}u^i$ denote the lower order terms in $u^i$ and $\bar{\mathcal{A}}^i_{LOT}\phi^i$ the lower order terms in $\phi^i$, so the non-local part in the lower order terms. Observe here that the terms of highest order are purely local expressions. Also, in the following linearizations of the boundary conditions no non-local terms will arise as the tangential part is only non-local away from the triple junction.\\
Condition $\eqref{EquationSurfaceDiffusionTripleJunctionGeometricVersiononRrenceFrame}_2$ is already linear. The result for the linearization of the angle conditions can be found in \cite[Section 3]{depner2014mean} and follows directly from \eqref{EquationLinearizAC}. For $\eqref{EquationSurfaceDiffusionTripleJunctionGeometricVersiononRrenceFrame}_5$ we apply \eqref{EquationLinearizH} on all three mean curvature operators to get
\begin{align}
\gamma^1\Dast u^1+\gamma^2\Dast u^2+\gamma^3\Dast u^3+\mathcal{B}_{LOT}^{4}\boldsymbol{u}=0,
\end{align}
where $\mathcal{B}_{LOT}^4\boldsymbol{u}$ denotes the lower order terms. Finally, for the linearization of $\eqref{EquationSurfaceDiffusionTripleJunctionGeometricVersiononRrenceFrame}_5$ and $\eqref{EquationSurfaceDiffusionTripleJunctionGeometricVersiononRrenceFrame}_6$ we first note that we have
\begin{align}
\ddeps\left(\nabla_{\Gamma^i_{\varepsilon}}H^i_{\varepsilon}\right)\Stricheps=\ddeps\left(\nabla_{\Gamma^i_{\varepsilon}}\right)\Stricheps H^i_{\ast}+\gast \ddeps\left(H^i_{\varepsilon}\right)\Stricheps=\gast\left(\Dast u^i\right)+loT, \label{EquationLinearizationAux}
\end{align}
where we again used that $H^i_{\ast}$ does not contain any derivatives of $\bu$ and so the first summand only contributes lower order terms. With \eqref{EquationLinearizationAux} and the same argumentation we get that
\begin{align}
\ddeps\left(\nabla_{\Gamma^i_{\varepsilon}}H^i_{\varepsilon}\cdot \nu^i_{\varepsilon}\right)\Stricheps=\ddeps\left(\nabla_{\Gamma^i_{\varepsilon}}H^i_{\varepsilon} \right)\Stricheps\cdot\nuast^i+\gast H^i_{\ast}\cdot\ddeps\left(\nu^i_{\varepsilon}\right)\Stricheps=\gast\left(\Dast u^i\right)+loT.
\end{align}
Summing up we get for the linearizations of $\eqref{EquationSurfaceDiffusionTripleJunctionGeometricVersiononRrenceFrame}_5$ and $\eqref{EquationSurfaceDiffusionTripleJunctionGeometricVersiononRrenceFrame}_6$ 
\begin{align*}
\gast(\Dast u^1)\cdot\nu^1_{\ast}-\gast
(\Dast u^2)\cdot\nu^2_{\ast}+\mathcal{B}^{5}_{LOT}\boldsymbol{u}&=0,\\
\gast(\Dast u^2)\cdot\nu^2_{\ast}-\gast
(\Dast u^3)\cdot\nu^3_{\ast}+\mathcal{B}^{6}_{LOT}\boldsymbol{u}&=0.
\end{align*}
where $\mathcal{B}^5_{LOT}\bu$ and $\mathcal{B}^6_{LOT}\bu$ denote the lower order terms and we directly rewrote the tangential terms using \eqref{EquationEquivalentCondtionforTripleJunctionConservation}. Summing up all our results we get the following linearized problem.
\begin{align}
(LSDFTJ)\begin{cases}
\partial_t u^i=-\Dast\Dast u^i+\widetilde{\mathcal{A}}^i_{LOT}u^i+\bar{\mathcal{A}}^i_{LOT}\trsigma{\bu} &\text{on }\Gamma^i_{\ast}\times[0,T],\ i=1,2,3, \\
\gamma^iu^i+\gamma^2u^2+\gamma^3u^3=0 &\text{on }\Sigma_{\ast}\times[0,T], \\
\partial_{\nu^1_{\ast}}u^1-\partial_{\nu^2_{\ast}}u^2+\mathcal{B}^2_{LOT}\boldsymbol{u}=0&\text{on }\Sigma_{\ast}\times[0,T],\\
\partial_{\nu^2_{\ast}}u^2-\partial_{\nu^3_{\ast}}u^3+\mathcal{B}^3_{LOT}\boldsymbol{u}=0 &\text{on }\Sigma_{\ast}\times[0,T],\\
\gamma^1\Delta_{\ast}u^1+\gamma^2\Delta_{\ast}u^2+\gamma^3\Delta_{\ast}u^3+\mathcal{B}^4_{LOT}\bu=0 &\text{on }\Sigma^{\ast}\times[0,T],\\
\gast(\Dast u^1)\cdot\nu^1_{\ast}-\gast
(\Dast u^2)\cdot\nu^2_{\ast}+\mathcal{B}^{5}_{LOT}\boldsymbol{u}=0 &\text{on }\Sigma^{\ast}\times[0,T],\\
\gast(\Dast u^2)\cdot\nu^2_{\ast}-\gast
(\Dast u^3)\cdot\nu^3_{\ast}+\mathcal{B}^{6}_{LOT}\boldsymbol{u}=0 &\text{on }\Sigma_{\ast}\times[0,T],\\
u^i\big|_{t=0}=\rho^i_{0}  &\text{on }\Gamma^i_{\ast},\ i=1,2,3.
\end{cases}
\end{align}
Here, $\mathcal{B}^2_{LOT}\bu$ and $\mathcal{B}^3_{LOT}\bu$ refer to the arising lower order terms. In an analytic version this writes now as
\begin{align}\label{EquationLinearisedSurfaceDiffusionTripleJunctionAbstractFormulaton}
\begin{cases}
\partial_t u^i = \mathcal{A}^i(u^i, \boldsymbol{u}\big|_{\Sigma_{\ast}})\big)+\mathfrak{f}^i &\text{on }\Gamma^i_{\ast}\times[0,T], i=1,2,3,\\
\mathcal{B}\boldsymbol{u}=\mathfrak{b} &\text{on }\Sigma_{\ast}\times[0,T],\\
u^i\big|_{t=0}=u_0^i &\text{on }\Gamma_{\ast}^i, i=1,2,3,
\end{cases}
\end{align}
where we used the notation
\begin{align*}
\mathcal{A}^i(u^i,\boldsymbol{u}\big|_{\Sigma_{\ast}})&=\mathcal{A}^i_{HOT}u^i+\mathcal{A}^i_{LOT}(u^i,\trsigma{\bu}),\\
\mathcal{A}_{HOT}^iu^i&=-\Dast\Dast u^i,\\
\mathcal{A}_{LOT}^iu^i&=\widetilde{\mathcal{A}}^i_{LOT}u^i+\bar{\mathcal{A}}^i_{LOT}\trsigma{\bu},\\
\mathcal{B}\bu&=\mathcal{B}_{HOT}\bu+\mathcal{B}_{LOT}\bu,\\
\mathcal{B}^1_{HOT}\boldsymbol{u}&=\gamma^1u^1+\gamma^2u^2+\gamma^3u^3,\\
\mathcal{B}^2_{HOT}\boldsymbol{u}&=\partialnuasta{1}u^1-\partialnuasta{2}u^2,\\
\mathcal{B}^3_{HOT}\boldsymbol{u}&=\partialnuasta{2} u^2-\partialnuasta{3}u^3,\\
\mathcal{B}^4_{HOT}\boldsymbol{u}&=\gamma^1\Dast u^1+\gamma^2\Dast u^2+\gamma^3\Dast u^3,\\
\mathcal{B}^5_{HOT}\boldsymbol{u}&=\partial_{\nuast^1}\big(\Dast u^1\big)-\partial_{\nuast^2}\big(\Dast u^2\big),\\
\mathcal{B}^6_{HOT}\boldsymbol{u}&=\partial_{\nuast^2}\big(\Dast u^2\big)-\partial_{\nuast^3}\big(\Dast u^3\big).
\end{align*}
Additionally, the $\mathfrak{f}^i$ and $\mathfrak{b}^i$ denote possible inhomogeneities, which we will will need for the fixed-point argument in the non-linear analysis.

\section{Linear Analysis}\label{SectionLinearAnalysis}
In this section we want to derive an existence result for the linearized problem \eqref{EquationLinearisedSurfaceDiffusionTripleJunctionAbstractFormulaton}. We will first study the system with the terms of highest order, zero initial data and without inhomogeneities in the lower order boundary conditions. That is, we are considering
\begin{align}\label{EquationReducedLinearProblem}
\begin{cases}
\partial_t u^i =-\Dast\Dast u^i+\mathfrak{f}^i &\text{on }\Gamma^i_{\ast}\times[0,T], i=1,2,3,\\
\mathcal{B}_{HOT}\boldsymbol{u}=\mathfrak{b} &\text{on }\Sigma_{\ast}\times[0,T],\\
u^i\big|_{t=0}=0 &\text{on }\Gamma_{\ast}^i, i=1,2,3.
\end{cases},
\end{align} 
with $\mathfrak{b}^1=\mathfrak{b}^2=\mathfrak{b}^3=\mathfrak{b}^4\equiv 0$. For this problem, we will show existence of weak solutions in Section \ref{SubsectionWeakTheory} and then H\"older-regularity for the found weak solutions in Section \ref{SubsectionSchauderEstimates}. In Section \ref{SubsectionCompleteLinearProblem} we will return to the original problem by including the missing terms via perturbations arguments. Altogether, we will prove the following result for \eqref{EquationLinearisedSurfaceDiffusionTripleJunctionAbstractFormulaton}.
\begin{theorem}[Short time existence for (LSDFTJ)]\label{theoremShorttimeexistenceLinearisedSDFTJGeneralInitialdata}\ \\
	Let $\sigma_i, i=1,...,6 $ be defined as in \eqref{EquationDefinitionofSigma}. For any $\alpha\in(0,1)$ and initial data $\bu_0\in C^{4+\alpha}_{TJ}(\Gamma_{\ast})$ there exists a $\delta_0>0$ such that for every $$\mathfrak{f}\in C^{\alpha,\frac{\alpha}{4}}_{TJ}(\Gamma_{\ast,\delta_0}),\quad \mathfrak{b}^i\in C^{4+\alpha-\sigma_i,\frac{4+\alpha-\sigma_i}{4}}(\Sigma_{\ast,\delta_0}),\ i=1,...,6,$$ fulfilling the inhomogeneous compatibility conditions
	\begin{align}\label{EquationCompatibilityConditionsLinearisedTJ}
	(CLP)\begin{cases}
	(\gamma^1\mathfrak{f}^1+\gamma^2\mathfrak{f}^2+\gamma^3\mathfrak{f}^3)\big|_{t=0}=\mathcal{B}_0(\boldsymbol{u}_0):=-\sum_{i=1}^3\gamma^i \mathcal{A}^i_{\text{all}}u_0^i &\text{on }\Sigmaast,\\
	\mathfrak{b}^i\big|_{t=0}=-\mathcal{B}^iu_0^i &\text{on }\Sigmaast, i=1,...,6,
	\end{cases}
	\end{align}
	 the problem (\ref{EquationLinearisedSurfaceDiffusionTripleJunctionAbstractFormulaton}) with initial data $\boldsymbol{u}_0$ has a unique solution $\boldsymbol{u}\in C^{4+\alpha,1+\frac{\alpha}{4}}(\Gamma_{\ast,\delta_0})$. Moreover, there exists a $C>0$ with 
	\begin{align}\label{EquationEnergieabschatzungenLineareGleichungVollTJ}
	\|\boldsymbol{u}\|_{C^{4+\alpha, \frac{1+\alpha}{4}}_{TJ}(\Gamma_{\ast,\delta_0})}\le C\left( \|\boldsymbol{\mathfrak{f}}\|_{C^{\alpha,\frac{\alpha}{4}}_{TJ}(\Gamma_{\ast,\delta_0})}+\|\boldsymbol{u}_0\|_{C^{4+\alpha}_{TJ}(\Gamma_{\ast})}+\sum_{i=1}^6\|\mathfrak{b}^i\|_{C^{4+\alpha-\sigma_i,\frac{4+\alpha-\sigma_i}{4}}(\Sigma_{\ast,\delta_0})}\right).
	\end{align} 
\end{theorem}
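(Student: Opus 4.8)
The strategy follows the outline stated before the theorem: decompose the problem into a simplified core problem (\ref{EquationReducedLinearProblem}) and then recover the full statement by perturbation and lifting arguments. First I would reduce to the case of zero initial data. Given $\bu_0\in C^{4+\alpha}_{TJ}(\Gamma_\ast)$, one constructs a suitable extension $\bu_\ast\in C^{4+\alpha,1+\frac{\alpha}{4}}_{TJ}(\Gamma_{\ast,\delta})$ with $\bu_\ast|_{t=0}=\bu_0$ --- for instance by solving an auxiliary parabolic problem or by a standard Seeley-type extension in local coordinates, using that $\Gammaast$ is $C^{5+\alpha}$. Setting $\bv:=\bu-\bu_\ast$, the function $\bv$ solves a problem of the form (\ref{EquationLinearisedSurfaceDiffusionTripleJunctionAbstractFormulaton}) with zero initial data, modified right-hand sides $\tilde{\mathfrak{f}}:=\mathfrak{f}-(\partial_t-\mathcal{A})\bu_\ast$ and $\tilde{\mathfrak{b}}:=\mathfrak{b}-\mathcal{B}\bu_\ast$. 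The inhomogeneous compatibility conditions (\ref{EquationCompatibilityConditionsLinearisedTJ}) are exactly what is needed so that $\tilde{\mathfrak{f}}|_{t=0}$ and $\tilde{\mathfrak{b}}^i|_{t=0}$ satisfy the \emph{homogeneous} compatibility relations (the traces at $t=0$ of the corresponding operators vanish); one must check this carefully, tracking the definition of $\mathcal{B}_0$ and $\sigma_i$.

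Second, I would dispose of the lower-order terms and the remaining boundary inhomogeneities $\mathfrak{b}^5,\mathfrak{b}^6$ (and the non-trivial $\mathfrak{b}^2,\mathfrak{b}^3$) by a further lifting plus a fixed-point/Neumann-series argument. Concretely: lift each $\mathfrak{b}^i$ to a function on $\Gamma_{\ast,\delta}$ in the right parabolic Hölder class (using Remark \ref{RemarkTracesParabolicHspaces} on traces, now in reverse), subtract it off, and absorb the resulting commutator terms into $\tilde{\mathfrak{f}}$. The lower-order operators $\mathcal{A}^i_{LOT}$, $\bar{\mathcal{A}}^i_{LOT}$, $\mathcal{B}^j_{LOT}$ all involve derivatives of order strictly less than $4$ (resp. strictly less than $4-\sigma_j$), with coefficients of the appropriate Hölder regularity by Lemma \ref{LemmaLinearisierungSDFTJErstesHilfslemma}. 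By the contractivity estimate (\ref{EquationContractivityofLowerOrderTerms}) of Lemma \ref{LemmaContractivityLowerOrderTerms}, composed with the solution operator for the reduced problem (\ref{EquationReducedLinearProblem}), these terms have operator norm $\le C\delta^{\bar\alpha}$ on $\Gamma_{\ast,\delta}$; choosing $\delta_0$ small enough makes this a contraction, so the full problem (\ref{EquationLinearisedSurfaceDiffusionTripleJunctionAbstractFormulaton}) is solvable with the estimate (\ref{EquationEnergieabschatzungenLineareGleichungVollTJ}) following from the Neumann series. The non-local term $\bar{\mathcal{A}}^i_{LOT}\trsigma{\bu}$ is handled in the same way since it too is lower order (as remarked after the linearization).

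Third --- and this is where the real work sits --- one needs the well-posedness and maximal regularity statement for the reduced problem (\ref{EquationReducedLinearProblem}) with zero initial data and $\mathfrak{b}^1=\dots=\mathfrak{b}^4\equiv 0$, i.e. Theorem \ref{theoremShorttimeexistenceLinearisedSDFTJGeneralInitialdata} in that special case. That is precisely the content of Sections \ref{SubsectionWeakTheory} and \ref{SubsectionSchauderEstimates}: existence of weak solutions via a Galerkin/Lax--Milgram scheme in the $H^1_{TJ}$-based energy space (using the Ehrling-type Proposition \ref{LemmaHilfslemmafurEnergieabschatzungschwacheLt} to control the boundary terms at $\Sigma_\ast$ and obtain coercivity of the bilinear form associated with $\Dast\Dast$ under the natural boundary conditions), followed by localization near $\Sigmaast$ and interior points, parabolic Schauder estimates for the localized constant-coefficient fourth-order systems, and a covering/compactness argument to globalize. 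The main obstacle is exactly this step: verifying that the boundary conditions $\mathcal{B}^1_{HOT},\dots,\mathcal{B}^6_{HOT}$ form a \emph{compatible, parameter-elliptic} (Lopatinskii--Shapiro) system for the operator $\mathrm{diag}(-\Dast\Dast)$ at the triple junction, so that both the coercive weak formulation and the localized Schauder theory apply. Once the reduced problem is settled, uniqueness for the full problem follows from the same contraction argument (or directly from the energy estimate applied to the difference of two solutions), and the uniform bound $R$ in the hypothesis is read off from (\ref{EquationEnergieabschatzungenLineareGleichungVollTJ}).
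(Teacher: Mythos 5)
Your overall architecture coincides with the paper's: reduce to the core problem \eqref{EquationReducedLinearProblem}, establish weak existence plus localized Schauder estimates via the Lopatinskii-Shapiro conditions for it, and then recover \eqref{EquationLinearisedSurfaceDiffusionTripleJunctionAbstractFormulaton} by lifting the boundary inhomogeneities, absorbing the lower-order terms through a small-time contraction (Lemma \ref{LemmaContractivityLowerOrderTerms}), and shifting the initial data; only the order differs (you shift the initial data first, the paper does it last by the time-constant extension $\bu_0$), which is immaterial since (CLP) plays the same role in both versions.

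Two of your intermediate steps, however, would not go through as written, and they are precisely where the paper invests specific constructions. First, there is no coercive bilinear form ``associated with $\Dast\Dast$'' on an $H^1_{TJ}$-based space: a fourth-order form requires $H^2$, and the paper notes explicitly that a direct weak formulation of \eqref{EquationReducedLinearProblem} cannot encode all six boundary conditions because the sum condition on $\Sigmaast$ removes one degree of freedom in the testing procedure. The actual device is the auxiliary variable \eqref{EquationAuxilaryVariable}, i.e.\ the splitting into the coupled second-order system \eqref{EquationLinearisedSDFTJWeakPart} with the zeroth-order terms $C_u,C_v$ inserted for coercivity, a Galerkin basis of eigenfunctions from \cite{depner2014mean}, and only then the $H^3$-regularity (Corollary \ref{CorollaryH3regularityWeakSolutionTJ}), the carefully built cut-off with $\partial_{\nu^i_{\ast}}\eta^i=0$, the smoothing of the inhomogeneities, and the Arzela--Ascoli limit leading to Proposition \ref{PropositionExistenceTheoryReducedLinearProbelm}. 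Second, your plan to ``lift each $\mathfrak{b}^i$ by an inverse trace and absorb the resulting commutator terms into $\tilde{\mathfrak{f}}$'' is too coarse for $\mathfrak{b}^1,\dots,\mathfrak{b}^4$: the reduced problem tolerates inhomogeneities only in the two flux conditions, so the lift must reproduce $\mathfrak{b}^1,\dots,\mathfrak{b}^4$ exactly while its remainders land in $\mathfrak{f}$, $\mathfrak{b}^5$, $\mathfrak{b}^6$ (boundary remainders cannot be pushed into $\tilde{\mathfrak{f}}$), and it must respect the compatibility conditions at $t=0$. The paper achieves this in Corollary \ref{CorollaryLinearisedAnalysisFullInhomo} by solving an auxiliary fourth-order parabolic problem for $\bar b$ with boundary datum $\mathfrak{b}^4$ and multiplying by $\tfrac{1}{2}\dist_{\Sigma_{\ast}}^2$ times a cut-off, so that the shift leaves the lower-order boundary conditions untouched; some such construction has to replace your appeal to Remark \ref{RemarkTracesParabolicHspaces} ``in reverse''. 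With these two points repaired, your outline matches the proof carried out in Sections \ref{SubsectionWeakTheory}--\ref{SubsectionCompleteLinearProblem}.
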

\subsection{Existence of Weak Solutions}
\label{SubsectionWeakTheory}
In this section we will discuss the theory of weak solution for \eqref{EquationReducedLinearProblem}. Unfortunately, we cannot directly use \eqref{EquationReducedLinearProblem} for a weak formulation as the sum condition for the $u^i$ on the triple junction reduces one degree of freedom in the testing procedure. Thus, we will not be able to fit in all boundary conditions into a weak formulation. To overcome this obstacle we need to split the fourth order parabolic problem in a coupled system of a second order parabolic and a second order elliptic equation. For this we introduce the auxiliary function
\begin{align}\label{EquationAuxilaryVariable}
v^i:=-\Dast u^i+C_vu^i,\quad i=1,2,3,
\end{align}
which equals the linearization of the mean curvature operator up to lower order terms. Using \eqref{EquationAuxilaryVariable} we can rewrite \eqref{EquationReducedLinearProblem} to
\begin{align}\label{EquationLinearisedSDFTJWeakPart}
(LSDFTJ)_P\begin{cases}
\partial_t u^i= \Dast v^i-C_uv^i+\mathfrak{f}^i &\text{on }\Gamma_{\ast}^i\times[0,T], i=1,2,3,\\
v_i=-\Delta_{\ast}u_i+C_vu^i
&\text{on }\Gamma_{\ast}^i\times[0,T], i=1,2,3,\\
\gamma^1u^1+\gamma^2u^2+\gamma^3u^3=0 &\text{on }\Sigma_{\ast}\times[0,T],\\
\partial_{\nu_{\ast}^1}u^1-\partial_{\nu^{\ast}}u^2=0 &\text{on }\Sigma_{\ast}\times[0,T],\\
\partial_{\nu_{\ast}^2}u^2-\partial_{\nu_{\ast}^3}u^3=0 &\text{on }\Sigma_{\ast}\times[0,T],\\
\gamma^1v^1+\gamma^2v^2+\gamma^3v^3=0 &\text{on }\Sigma_{\ast}\times[0,T], \\
\partial_{\nu^1_{\ast}}v^1-\partial_{\nu^2_{\ast}}v^2=\mathfrak{b}^5 &\text{on }\Sigma_{\ast}\times[0,T],\\
\partial_{\nu_{\ast}^2}v^2-\partial_{\nu_{\ast}^3}v^3=\mathfrak{b}^6 &\text{on }\Sigma_{\ast}\times[0,T],\\
u\big|_{t=0}=0 &\text{on }\Gamma^i_{\ast}, i=1,2,3.
\end{cases} 
\end{align}
Hereby, the terms $-C_uv^i$ and $C_vu^i$ are included to guarantee coercivity for the operators induced by the weak formulation. The constants $C_u$ and $C_v$ are chosen large enough such that these hold. During the proof of existence of weak solutions we will see that they only depend on the system. Once we haven proven H\"older regularity for this system they will disappear in the perturbation argument. \\
On this we apply the usual testing procedure to get the following weak formulation. We consider the function spaces
\begin{align*}
\mathcal{L}&:=L^2(\Gamma^1_{\ast})\times L^2(\Gamma^2_{\ast})\times L^2(\Gamma^3_{\ast}),\\
\mathcal{L}_b&:=L^2(\Sigma_{\ast})^3,\\
\mathcal{H}^1&:=H^{1}(\Gamma^1_{\ast})\times H^1(\Gamma^2_{\ast})\times H^1(\Gamma^3_{\ast}),\\
\mathcal{H}^{-1}&:=H^{-1}(\Gamma_{\ast}^1)\times H^{-1}(\Gamma_{\ast}^2)\times H^{-1}(\Gamma_{\ast}^3),\\
\mathcal{E}&:=\{\bu\in\mathcal{H}^1|\gamma^1u^1+\gamma^2u^2+\gamma^3u^3=0\text{ a.e. on }\Sigma^{\ast}\},
\end{align*}
and the continuous operators given by 
\begin{align*}
\langle\partial_t \boldsymbol{u},\boldsymbol{\zeta}\rangle_{dual}&:=\sum_{i=1}^3\gamma^i\langle\partial_t u^i, \zeta^i\rangle_{dual} & &\partial_t\boldsymbol{u}\in\mathcal{E}^{-1}, \boldsymbol{\zeta}\in\mathcal{E},\\
B_u[\boldsymbol{v},\boldsymbol{\zeta}]&:=-\sum_{i=1}^3\gamma^i\int_{\Gamma_{\ast}^i}\gast v^i\cdot\gast\zeta^i+C_uv^i\zeta^i\dH^n & &\boldsymbol{v},\boldsymbol{\zeta}\in\mathcal{E},\\
B_v[\boldsymbol{u},\boldsymbol{\psi}]&:=\sum_{i=1}^3\gamma^i\int_{\Gamma_{\ast}^i}\gast u^i\cdot \gast\psi^i+C_vu^i\psi^i\dH^n & &\boldsymbol{u},\boldsymbol{\psi}\in\mathcal{E},\\
b_u(\boldsymbol{\zeta};t)&:=\int_{\Sigma_{\ast}}\gamma^1\mathfrak{b}^5\zeta^1-\gamma^3\mathfrak{b}^6\zeta^3\dH^n& &\boldsymbol{\zeta}\in\mathcal{E},\\
f_u(\boldsymbol{\zeta}; t)&=\sum_{i=1}^3\int_{\Gamma_{\ast}^i}\gamma^i\mathfrak{f}^i\zeta^i\dH^n & &\boldsymbol{\zeta}\in\mathcal{E}.
\end{align*}
For clarification we note that $\langle\cdot,\cdot\rangle_{\text{dual}}$ in the first line on the right hand side is the usual duality pairing between $\mathcal{E}^{-1}$ and $\mathcal{E}$. With this we can introduce our weak solution concept.
\begin{definition}[Weak Solution of $(LSDFTJ)_P$]\ \\
	We call a tuple $(\boldsymbol{u},\boldsymbol{v})\in L^2(0,T;\mathcal{E})\times L^2(0,T;\mathcal{E})$ with $\partial_t \bu\in L^2(0,T; \mathcal{E}^{-1})$ a weak solution of $(\ref{EquationLinearisedSDFTJWeakPart})$ if for all $\boldsymbol{\zeta},\boldsymbol{\psi}\in \mathcal{E}$ and almost all $t\in[0,T]$ it holds
	\begin{align}\label{EquationWeakformulationofLinearSDFTJ}
	\begin{cases}\langle\partial_t \boldsymbol{u},\boldsymbol{\zeta}\rangle_{\text{dual}}=B_u[\boldsymbol{v},\boldsymbol{\zeta}]+b_u(\boldsymbol{\zeta};t)+f_u(\boldsymbol{\zeta};t),\\
	(\boldsymbol{\gamma}\boldsymbol{v},\boldsymbol{\psi})_{\mathcal{L}}=B_v[\boldsymbol{u},\boldsymbol{\psi}],
	\end{cases}
	\end{align}
	and additionally it holds 
	\begin{align}
	\bu\big|_{t=0}=0.
	\end{align}
\end{definition}
Reversing the test procedure one can show that weak solutions with $C^{4,1}$-regularityare indeed classical solutions of $(LSDFTJ)_P$, cf. \cite[Lemma 4.11]{goesswein2019Dissertation}. Now we show existence of a unique weak solution of $(LSDFTJ)_P$.
\begin{proposition}[Existence of weak solutions of $(LSDFTJ)_P$]\label{PropostionsExistenceofWeakSolutionsSurfaceDiffsuionTripleJunction}\ \\ 
	For all $\mathfrak{f}\in L^2(0,T; \mathcal{L})$ and $\mathfrak{b}^5, \mathfrak{b}^6\in L^2(0,T; L^2(\Sigma_{\ast}))$ there exists a unique weak solution $(\boldsymbol{u}, \boldsymbol{v})$ of (\ref{EquationLinearisedSDFTJWeakPart}) and we have the energy estimates
	\begin{align}
	\max_{0\le t\le T}\|\boldsymbol{u}(t) \|_{\mathcal{E}}+\|\boldsymbol{u}\|_{L^2(0,T;\mathcal{E})}&+\|\boldsymbol{u'}\|_{L^2(0,T;\mathcal{E}^{-1})}+\|\boldsymbol{v}\|_{L^2(0,T; \mathcal{E})}\label{EquationEnergyEstiamteWeakSolutionTJ}\\
	&\le C\left(\|\mathfrak{f}\|_{L^2(0,T; \mathcal{L})}+\sum_{i=5}^6\|\mathfrak{b}^i\|_{L^2(0,T; L^2(\Sigma_{\ast}))}\right)\notag.
	\end{align}
\end{proposition}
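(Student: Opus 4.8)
The plan is to set up the coupled system $(LSDFTJ)_P$ as an abstract parabolic problem for $\bu$ alone, using the second (elliptic) equation to express $\bv$ as a function of $\bu$, and then apply the standard Galerkin method together with energy estimates. First I would note that the second equation $(\boldsymbol{\gamma}\bv,\boldsymbol{\psi})_{\mathcal L}=B_v[\bu,\boldsymbol{\psi}]$, for fixed $\bu\in\mathcal E$, is uniquely solvable for $\bv\in\mathcal E$: the bilinear form $\boldsymbol{\psi}\mapsto(\boldsymbol{\gamma}\bv,\boldsymbol{\psi})_{\mathcal L}$ together with the constraint defining $\mathcal E$ gives a coercive problem on $\mathcal E$ once $C_v$ is chosen large enough (here one uses the Ehrling-type Proposition \ref{LemmaHilfslemmafurEnergieabschatzungschwacheLt} to absorb the boundary contributions of the $\gamma^i$-weighted inner product and to control $\|\bu\|_{\mathcal L}$ by $\varepsilon\|\nabla_\Gamma\bu\|+C_\varepsilon\|\bu\|$), so Lax–Milgram yields a bounded linear solution operator $S:\mathcal E\to\mathcal E$, $\bv=S\bu$, with $\|S\bu\|_{\mathcal E}\le C\|\bu\|_{\mathcal E}$. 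Substituting this into the first equation turns the weak formulation into
\begin{align*}
\langle\partial_t\bu,\boldsymbol{\zeta}\rangle_{\mathrm{dual}}=B_u[S\bu,\boldsymbol{\zeta}]+b_u(\boldsymbol{\zeta};t)+f_u(\boldsymbol{\zeta};t)\quad\text{for all }\boldsymbol{\zeta}\in\mathcal E,
\end{align*}
which is a linear parabolic evolution equation in the Gelfand triple $\mathcal E\hookrightarrow\mathcal L\hookrightarrow\mathcal E^{-1}$ (identifying $\mathcal L$ with its dual via the $\boldsymbol{\gamma}$-weighted inner product, so that $\langle\partial_t\bu,\boldsymbol{\zeta}\rangle_{\mathrm{dual}}$ is the correct pairing).

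Next I would carry out the Galerkin approximation. Choose a countable basis $\{\boldsymbol{w}_k\}$ of $\mathcal E$ (dense also in $\mathcal L$), let $\bu_N=\sum_{k=1}^N d_N^k(t)\boldsymbol{w}_k$ solve the finite-dimensional ODE system obtained by testing with $\boldsymbol{w}_1,\dots,\boldsymbol{w}_N$ and $\bv_N$ the corresponding solution of the discrete elliptic equation, with $\bu_N(0)=0$. The ODE system has a unique solution on $[0,T]$ by Picard–Lindelöf once one checks that the discrete elliptic solve is well-posed (again via the coercivity of $B_v$ on the finite-dimensional subspace). The heart of the matter is the a priori estimate: test the first discrete equation with $\boldsymbol{\zeta}=\bv_N$ and the second with $\boldsymbol{\psi}=\partial_t\bu_N$. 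Adding the two (after differentiating the elliptic identity in time, or more cleanly using the identity $B_v[\bu_N,\partial_t\bu_N]=\tfrac12\tfrac{d}{dt}B_v[\bu_N,\bu_N]$, which is legitimate since $B_v$ is symmetric) produces
\begin{align*}
\tfrac12\tfrac{d}{dt}B_v[\bu_N,\bu_N]=B_u[\bv_N,\bv_N]+b_u(\bv_N;t)+f_u(\bv_N;t),
\end{align*}
where $B_v[\bu_N,\bu_N]\ge c\|\bu_N\|_{\mathcal E}^2$ by coercivity and $-B_u[\bv_N,\bv_N]\ge c\|\bv_N\|_{\mathcal E}^2$ by coercivity of $-B_u$ (this is where $C_u$ large enters). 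Estimating the two linear functionals by $b_u(\bv_N;t)\le \varepsilon\|\bv_N\|_{\mathcal E}^2+C_\varepsilon(\|\mathfrak b^5\|^2_{L^2(\Sigma_\ast)}+\|\mathfrak b^6\|^2_{L^2(\Sigma_\ast)})$ — using Remark \ref{RemarkTracesParabolicHspaces} / the trace inequality, concretely again Proposition \ref{LemmaHilfslemmafurEnergieabschatzungschwacheLt} — and $f_u(\bv_N;t)\le\varepsilon\|\bv_N\|_{\mathcal E}^2+C_\varepsilon\|\mathfrak f\|_{\mathcal L}^2$, absorbing the $\varepsilon$-terms, and integrating in time with $\bu_N(0)=0$ gives
\begin{align*}
\max_{0\le t\le T}\|\bu_N(t)\|_{\mathcal E}^2+\int_0^T\|\bv_N\|_{\mathcal E}^2\,dt\le C\Big(\|\mathfrak f\|_{L^2(0,T;\mathcal L)}^2+\textstyle\sum_{i=5}^6\|\mathfrak b^i\|_{L^2(0,T;L^2(\Sigma_\ast))}^2\Big).
\end{align*}
From the equation for $\partial_t\bu_N$ and the bound on $\bv_N$ one then reads off $\|\partial_t\bu_N\|_{L^2(0,T;\mathcal E^{-1})}$ by the same right-hand side, using that $B_u[\bv_N,\cdot]$ and the linear functionals are bounded on $\mathcal E$ uniformly.

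Finally I would pass to the limit: the uniform bounds give, along a subsequence, $\bu_N\rightharpoonup\bu$ in $L^2(0,T;\mathcal E)$ with $\partial_t\bu_N\rightharpoonup\partial_t\bu$ in $L^2(0,T;\mathcal E^{-1})$, $\bv_N\rightharpoonup\bv$ in $L^2(0,T;\mathcal E)$, and by Aubin–Lions $\bu_N\to\bu$ strongly in $L^2(0,T;\mathcal L)$ and in $C([0,T];\mathcal L)$ up to a further subsequence, so that $\bu(0)=0$ is preserved and the weak formulation holds in the limit for each fixed basis element and hence, by density, for all $\boldsymbol{\zeta},\boldsymbol{\psi}\in\mathcal E$; the energy estimate \eqref{EquationEnergyEstiamteWeakSolutionTJ} is inherited from the Galerkin bound by weak lower semicontinuity of the norms. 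Uniqueness follows by linearity: the difference of two weak solutions solves the homogeneous problem, and the energy estimate above (with zero data) forces it to vanish. The main obstacle, and the only genuinely problem-specific point, is the coercivity of $-B_u$ and of $B_v$ on the constrained space $\mathcal E$: the $\gamma^i$-weighted structure and the triple-junction coupling mean the usual Poincaré/Korn-type argument must be run through the Ehrling-type inequality of Proposition \ref{LemmaHilfslemmafurEnergieabschatzungschwacheLt}, and one must verify that the constants $C_u,C_v$ can be chosen depending only on the fixed geometry $\Gamma_\ast$ and the $\gamma^i$, $\theta^i$ — everything else is the textbook Galerkin scheme for linear parabolic systems.
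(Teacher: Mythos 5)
Your proposal is correct in substance and follows the same basic route as the paper: a Galerkin scheme, the energy identity obtained by testing the first equation with $\boldsymbol{v}$ and the second with $\partial_t\boldsymbol{u}$ (equivalently your symmetric identity $B_v[\boldsymbol{u},\partial_t\boldsymbol{u}]=\tfrac12\tfrac{d}{dt}B_v[\boldsymbol{u},\boldsymbol{u}]$), absorption of the boundary inhomogeneities via the Ehrling-type Proposition \ref{LemmaHilfslemmafurEnergieabschatzungschwacheLt} and a large choice of $C_u$, the dual estimate for $\partial_t\boldsymbol{u}_N$, weak limits, and uniqueness by linearity. The differences are organizational: the paper does not pre-reduce to a single evolution equation via a Lax--Milgram solution operator, but keeps the coupled system and uses as Galerkin basis the eigenfunctions of the auxiliary eigenvalue problem from \cite{depner2014mean}, which are simultaneously orthogonal for the weighted $\mathcal{L}$-product, $B_u$ and $B_v$; this diagonalizes the discrete system into scalar ODEs and also simplifies the $\mathcal{E}^{-1}$-estimate for $\partial_t\boldsymbol{u}_m$ (the paper splits an arbitrary test function into its span-part and its orthogonal complement in a weighted $\mathcal{E}$-product), whereas your generic basis works too but leaves you with a coupled linear ODE system and a projection argument you would have to spell out. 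Two small corrections: since $\mathfrak{f},\mathfrak{b}^5,\mathfrak{b}^6$ are only $L^2$ in time, the right-hand side of the discrete ODE system is merely integrable in $t$, so Picard--Lindel\"of does not apply directly; the paper invokes Carath\'eodory's theorem and obtains absolutely continuous coefficient functions, and you should do the same. Also, coercivity of $B_v$ on $\mathcal{E}$ needs no Ehrling argument at all (it contains no boundary terms and is coercive for any $C_v>0$); the Ehrling inequality is needed only to estimate the boundary functional $b_u(\boldsymbol{v}_N;t)$, whose $\|\boldsymbol{v}_N\|_{L^2(\Sigma_\ast)}$-contributions are then absorbed by choosing $C_u$ large, exactly as you do later in the estimate. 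With these adjustments your argument matches the paper's proof.
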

\begin{proof}
	We want to apply the Galerkin scheme. As in \cite[Section 4]{depner2014mean} we can choose
	the solutions $(\boldsymbol{z}_j)_{j\in\N}$ of the eigenvalue problem
	\begin{align}\label{EquationProofofExistenceofWeakSolution}
	\begin{cases}
	-\gamma^i \Dast z^i=\lambda\gamma^iz^i & \text{on }\Gamma^i_{\ast}, i=1,2,3,\\
	\gamma^1z^1+\gamma^2z^2+\gamma^3z^3=0 & \text{on }\Sigma_{\ast},\\
	\partial_{\nu_{\ast}^1}z^1=\partial_{\nu_{\ast}^2}z^2=
	\partial_{\nu_{\ast}^3}z^3 &\text{on }\Sigma_{\ast},
	\end{cases}
	\end{align}  
	as orthonormal basis of $\mathcal{L}$ equipped with the inner product
	\begin{align}\label{EquationInnerProduct}
	(\boldsymbol{f},\boldsymbol{g})_{\gamma\mathcal{L}}:=\sum_{i=1}^3\gamma^i\int_{\Gamma_{\ast}^i}f^ig^i\dH^n.
	\end{align}
	In \cite{depner2014mean} , the authors proved also smoothness of these functions. Also note that due to the fact that the $\boldsymbol{z}_j$ are weak solutions of this eigenvalue problem, they are also orthogonal with respect to the products $B_u$ and $B_v$. We now search for weak solutions $(\boldsymbol{u}_m, \boldsymbol{v}_m)$ of the problem projected to the $m$-dimensional subspace $Z_m:=\langle z_j\rangle_{j=1,...,m}$ of $\mathcal{E}$, i.e. we search for solutions of
	\begin{align}\notag
	\langle\partial_t \boldsymbol{u}_m, \boldsymbol{z}_j\rangle_{dual}&=B_u[\boldsymbol{v}_m,\boldsymbol{z}_j]+b_u(\boldsymbol{z}_j,t)+f_u(\boldsymbol{z}_j, t ) & j&=1,...,m, \\\label{EquationWeakFormulationFiniteDimensionalProblem}
	(\boldsymbol{v}_m, \boldsymbol{z}_j)_{\gamma\mathcal{L}}&=B_v[\boldsymbol{u}_m,\boldsymbol{z}_j]
	 & j&=1,...,m, \\
	u(\cdot, 0)&=0.\notag
	\end{align}
	We will index $(\ref{EquationWeakFormulationFiniteDimensionalProblem})$ with an $m$ to clarify on which space we are projecting. Solutions of $\eqref{EquationWeakFormulationFiniteDimensionalProblem}_m$ are of the form
	\begin{align*}
	\boldsymbol{u}_m(x,t)&=\sum_{j=1}^m a_{mj}(t)\boldsymbol{z}_j(x),\\
	\boldsymbol{v}_m(x,t)&=\sum_{j=1}^mb_{mj}(t)\boldsymbol{z}_j(x).
	\end{align*} 
	Due to the orthonormality of the $\boldsymbol{z}_j$ with respect to the product (\ref{EquationInnerProduct}) we get the following ODE-System for the coefficient functions $a_{mj}$ and $b_{mj}$, where we use the orthogonality properties with respect to $B_u$ and $B_v$ discussed above.
	\begin{align*}
	a_{mj}'(t)&=b_{mj}(t)B_u[\boldsymbol{z}_j,\boldsymbol{z}_j]+b_u(\boldsymbol{z}_j;t)+f_u(\boldsymbol{z}_j;t) & &j=1,..., m,\\
	b_{mj}(t)&= a_{mj}(t)B_u[\boldsymbol{z}_j, \boldsymbol{z}_j] & &j=1,..., m,\\
	a_{mj}(0)&=0 & &j=1,..., m.
	\end{align*}
	Inserting the equations for $b_{mj}(t)$ in those for $a'_{mj}(t)$ we get a standard system of ODEs for which we can apply the Theorem of Caratheodory to show existence of absolute continuous solutions $a_{mj}$. Inserting this solution in the second equation we also get the $b_{mj}$ as functions in $L^2(0,T; \mb{R})$.\\
	The next thing to do is verifying an energy estimate for our problem. We notice that for the solution $(u_m,v_m)$ of $(\ref{EquationWeakFormulationFiniteDimensionalProblem})_m$, the functions $\partial_t u_m$ and $v_m$ are valid testfunctions. Thus, we can test the first equation in $(\ref{EquationWeakFormulationFiniteDimensionalProblem})_m$ with $v_m$ and the second equation with $\partial_t u_m$ to get
	\begin{align}
	\sum_{i=1}^3\gamma^i\int_{\Gamma_{\ast}^i}\partial_t u_m^iv_m^i\dH^n&=-\sum_{i=1}^3\gamma^i\int_{\Gamma_{\ast}^i}|\nabla_{\ast}v_m^i|^2\dH^n-\gamma^iC_u\sum_{i=1}^3\int_{\Gamma_{\ast}^i}|v_m^i|^2\dH^n\notag\\
	&+\sum_{i=1}^3\gamma^i\int_{\Gamma_{\ast}^i}\mathfrak{f}^iv_m^i\dH^n+\int_{\Sigma_{\ast}}\gamma^1\mathfrak{b}^5v_m^1-\gamma^3\mathfrak{b}^6v_m^3\dH^{n-1},\\
	\sum_{i=1}^3\gamma^i\int_{\Gamma_{\ast}^i}v_m^i\partial_t u_m^i\dH^n&=\sum_{i=1}^3\gamma^i\partial_t\left(\frac{1}{2}\int_{\Gamma_{\ast}^i}|\nabla_{\ast}u^i|^2\dH^n+\frac{1}{2}C_v\int_{\Gamma_{\ast}^i}|u_m^i|^2\dH^n\right).\notag
	\end{align}
	Subtracting the first equation from the second and rearranging the terms leads to 
	\begin{align}
	\sum_{i=1}^3\gamma^i\Bigg(\int_{\Gamma_{\ast}^i}|\nabla_{\ast}v_m^i|^2\dH^n&+C_u\int_{\Gamma_{\ast}^i}|v_m^i|^2\dH^n\Bigg)+\gamma^i\partial_t\left(\frac{1}{2}\int_{\Gamma_{\ast}^i}|\nabla_{\ast} u_m^i|^2+C_v|u_m^i|^2\dH^n\right)\label{Equation31028}\\
	&= \sum_{i=1}^3\left(\gamma^i\int_{\Gamma_{\ast}^i}\mathfrak{f}^iv_m^i\dH^n\right)+\int_{\Sigma_{\ast}}\gamma^1\mathfrak{b}^5v_m^1-\gamma^3\mathfrak{b}^6v_m^3\dH^{n-1}.\notag
	\end{align}
	Applying the weighted Young-inequality gives us
	\begin{align}\label{EquationFUCKTHISSHIT}
	\sum_{i=1}^3\gamma^i\int_{\Gamma_{\ast}^i}\mathfrak{f}^iv_m^i\dH^n&\le\sum_{i=1}^3\frac{\gamma^i}{2\varepsilon}\int_{\Gamma_{\ast}^i}|\mathfrak{f}^i|^2\dH^n+\frac{\varepsilon\gamma^i}{2}\int_{\Gamma_{\ast}^i}|v_m^i|^2\dH^n,\\
	\int_{\Sigma_{\ast}}\gamma^1\mathfrak{b}^5v_m^1\dH^{n-1}&\le\frac{1}{2\varepsilon'}\|\gamma^1\mathfrak{b}^5\|^2_{L^2(\Sigma_{\ast})}+\frac{\varepsilon'}{2}\|v_m^1\|^2_{L^2(\Sigmaast)},  \\
	\int_{\Sigmaast}-\gamma^3\mathfrak{b}^6v_m^3\dH^{n-1}&\le \frac{1}{2\varepsilon'}\|\gamma^3\mathfrak{b}^6\|^2_{L^2(\Sigma_{\ast})}+\frac{\varepsilon'}{2}\|v_m^3\|^2_{L^2(\Sigmaast)}.
	\end{align}
	Note now that Lemma \ref{LemmaHilfslemmafurEnergieabschatzungschwacheLt} implies for any $\boldsymbol{g}\in\mathcal{H}^1$ that
	\begin{align*}
	\|g^i\|_{L^2(\Sigmaast)}^2\le \overline{\varepsilon}^2\|\gast\boldsymbol{g}\|_{\mathcal{L}}^2+2\overline{\varepsilon}C_{\overline{\varepsilon}}\|\gast\boldsymbol{g}\|_{\mathcal{L}}\|\boldsymbol{g}\|_{\mathcal{L}}+C_{\overline{\varepsilon}}^2\|\boldsymbol{g}\|_{\mathcal{L}}^2\le 2\overline{\varepsilon}^2\|\gast\boldsymbol{g}\|_{\mathcal{L}}^2+2C_{\overline{\varepsilon}}^2\|\boldsymbol{g}\|_{\mathcal{L}}^2.
	\end{align*}
	Therefore, we conclude 
	\begin{align}\label{EquationFUCKTHISSHIT2}
	\frac{\varepsilon'}{2}\left(\|v_m^1\|_{L^2(\Sigmaast)}^2+\|v_m^3\|_{L^2(\Sigmaast)}^2 \right)\le \overline{\varepsilon}^2\varepsilon'\|\gast\boldsymbol{w}_m\|_{\mathcal{L}}^2+C_{\overline{\varepsilon}}^2\varepsilon'\|\boldsymbol{w}_m\|_{\mathcal{L}}^2.
	\end{align}
	Applying (\ref{EquationFUCKTHISSHIT})-(\ref{EquationFUCKTHISSHIT2}) on (\ref{Equation31028}) we get
	\begin{align}
	\sum_{i=1}^3&\gamma^i\Bigg(\int_{\Gamma_{\ast}^i}|\nabla_{\ast}v_m^i|^2\dH^n+C_u\int_{\Gamma_{\ast}^i}|v_m^i|^2\dH^n\Bigg)+\partial_t\left(\gamma^i\frac{1}{2}\int_{\Gamma_{\ast}^i}|\nabla_{\ast} u_m^i|^2+C_v|u_m^i|^2\dH^n\right)\notag\\
	&\le\sum_{i=1}^3\frac{\gamma^i}{2\varepsilon}\int_{\Gamma_{\ast}^i}|\mathfrak{f}^i|^2\dH^n+\frac{1}{2\varepsilon'}\left(\|\gamma^1\mathfrak{b}^5\|^2_{L^2(\Sigmaast)}+\|\gamma^3\mathfrak{b}^6\|_{L^2(\Sigmaast)}^2\right)\\
	&\phantom{=}+\sum_{i=1}^3\frac{\varepsilon\gamma^i}{2}\int_{\Gamma_{\ast}^i}|v_m^i|^2\dH^n+\overline{\varepsilon}^2\varepsilon'\|\gast\boldsymbol{w}_m\|_{\mathcal{L}}^2+C_{\overline{\varepsilon}}^2\varepsilon'\|\boldsymbol{w}_m\|_{\mathcal{L}}^2.\notag
	\end{align}
	Now choosing $\overline{\varepsilon}=\frac{1}{2}, \varepsilon'=\min(\gamma_1,\gamma_2,\gamma_3), \varepsilon=1$ and  then $C_u$ large enough we can absorb the $\boldsymbol{w}_m$- and $\nabla_{\ast}\boldsymbol{w}_m$-terms on the right-hand side by the ones on the left-hand side to get
	\begin{align}\label{EquationEnergieabschaetzungSchwacheExistenzTripleJunction}
	&\partial_t (\|\boldsymbol{u}_m\|_{\gamma\mathcal{L}}^2+\|\nabla_{\ast}\boldsymbol{u}_m\|_{\gamma\mathcal{L}}^2)+C(\|\boldsymbol{w}_m\|_{\mathcal{L}}^2+\|\nabla_{\ast}\boldsymbol{w}_m\|_{\mathcal{L}}^2)\le C\big(\|\mathfrak{b}^5\|_{L^2(\Sigma_{\ast})}^2+\|\mathfrak{b}^6\|_{L^2(\Sigma_{\ast})}^2+\|\mathfrak{f}\|_{\mathcal{L}}^2 \big).
	\end{align}
	In particular, we get for all $t\in [0,T]$ that
	\begin{align}
	\partial_t(\|\boldsymbol{u}_m\|_{\gamma\mathcal{L}}^2+\|\nabla_{\ast}\boldsymbol{u}_m\|_{\gamma\mathcal{L}}^2)\le C\big(\|\mathfrak{b}^5\|_{L^2(\Sigma_{\ast})}^2+\|\mathfrak{b}^6\|_{L^2(\Sigma_{\ast})}^2+\|\mathfrak{f}\|_{\mathcal{L}}^2 \big)
	\end{align}
	Integrating this in time using $u(0)\equiv 0$ leads to
	\begin{align}
	\|\boldsymbol{u}_m(t)\|_{\gamma\mathcal{L}}^2+\|\nabla_{\ast}\boldsymbol{u}_m(t)\|_{\gamma\mathcal{L}}^2&\le C\int_0^t\left(\|\mathfrak{b}^5(t)\|_{L^2(\Sigma_{\ast})}^2+\|\mathfrak{b}^6(t)\|_{L^2(\Sigma_{\ast})}^2+\|\mathfrak{f}(t)\|_{\mathcal{L}}^2 \right)dt\\
	&\le C \left(\|\mathfrak{b}^5\|_{L^2(0,T; L^2(\Sigma_{\ast}))}^2+\|\mathfrak{b}^6\|_{L^2(0,T; L^2(\Sigma_{\ast}))}^2+\|\mathfrak{f}\|_{L^2(0,T; \mathcal{L})}^2\right) \notag
	\end{align}
	for all $t\in [0,T]$. 
	Integrating (\ref{EquationEnergieabschaetzungSchwacheExistenzTripleJunction}) from $0$ to $T$ implies for $\boldsymbol{v}_m$ that
	\begin{align}\label{EquationEnergieabschaetzungSchwachesExistenzTripleJunctionBoundforwm}
	\|\boldsymbol{v}_m\|_{L^2(0,T; \mathcal{E})}^2\le C\big(\|\mathfrak{b}^5\|_{L^2(0,T; L^2(\Sigma_{\ast}))}^2+\|\mathfrak{b}^6\|_{L^2(0,T; L^2(\Sigma_{\ast}))}^2+\|\mathfrak{f}\|_{L^2(0,T; \mathcal{L})}^2\big).
	\end{align}
	It remains to study the norm of $\partial_t \boldsymbol{u}_m$ which can be carried out with a standard argument. We first introduce the space $\gamma\mathcal{E}$ that contains all elements of $\mathcal{E}$ and is equipped with the inner product
	\begin{align}
	(\boldsymbol{v}, \boldsymbol{w})_{\gamma\mathcal{E}}:=(\nabla_{\ast}\boldsymbol{v},\gast\boldsymbol{w})_{\gamma\mathcal{L}}+(\boldsymbol{v},\boldsymbol{w})_{\gamma\mathcal{L}}.
	\end{align}
	Note that as the $\boldsymbol{z}_j$ are classical solutions of the eigenvalue problem $\eqref{EquationProofofExistenceofWeakSolution}$ they are also orthogonal sytem in $\gamma\mathcal{E}$. Also, the norm induced by the $\gamma\mathcal{E}$-product is equivalent to the norm on $\mathcal{E}$. Now, choosing any $\boldsymbol{v}\in\mathcal{E}$ with $\|\boldsymbol{v}\|_{\mathcal{E}}\le 1$ we write $\boldsymbol{v}=\boldsymbol{v}_1+\boldsymbol{v}_2$ with 
	\begin{align}\boldsymbol{v}_1\in \text{span}\{\boldsymbol{z}_j\}_{j=1,...,m}, \quad (\boldsymbol{v}_2, \boldsymbol{z}_j)_{\gamma\mathcal{L}}=0, j=1,..., m.
	\end{align} 
	Due to equivalence of norms we get $\|\boldsymbol{v}\|\le C'$ for a constant independent of $\boldsymbol{v}$ and
	\begin{align}
	\|\boldsymbol{v}_1\|_{\gamma\mathcal{E}}\le \|\boldsymbol{v}\|_{\gamma\mathcal{E}}\le C,
	\end{align} due to the orthogonality of the $\boldsymbol{z}_j$ in $\mathcal{E}_{\gamma}$. Then, again using equivalence of the norms on $\mathcal{E}$ and $\gamma\mathcal{E}$, we conclude $\|\boldsymbol{v}_1\|_{\mathcal{E}}\le C$ for a $C$ independent of $\boldsymbol{v}$. With this in mind we get the estimate
	\begin{align*}
	|\langle \partial_t \boldsymbol{u}_m,\boldsymbol{v}\rangle_{dual}|&=|( \partial_t\boldsymbol{u}_m, \boldsymbol{v}_1)_{\gamma\mathcal{L}}|=\big(|B(\boldsymbol{v}_m, \boldsymbol{v}_1)|+|f_u(\boldsymbol{v}_1))|+|b_u(\boldsymbol{v}_1)|\big)\\
	&\le C\big (\|\boldsymbol{v}_m\|_{\mathcal{E}}+\|\mathfrak{f}\|_{\mathcal{L}}+\|\mathfrak{b}^5\|_{L^2(\Sigma_{\ast})}+\|\mathfrak{b}^6\|_{L^2(\Sigma_{\ast})}\big)\\
	&\le C\big (\|\mathfrak{f}\|_{\mathcal{L}}+\|\mathfrak{b}^5\|_{L^2(\Sigma_{\ast})}+\|\mathfrak{b}^6\|_{L^2(\Sigma_{\ast})}\big).
	\end{align*}
	Here, we used (\ref{EquationEnergieabschaetzungSchwacheExistenzTripleJunction}) in the last inequality and for $b_u(\boldsymbol{v}_1;t)$ we used the Cauchy-Schwarz inequality and continuity of the trace operator to derive
	\begin{align*}
	|b_u(\boldsymbol{v}_1;t)|&\le C\big(\|\mathfrak{b}^5(t)\|_{L^2(\Sigma_{\ast})}\|v_1^1\|_{L^2(\Sigma_{\ast})}+\|\mathfrak{b}^6(t)\|_{L^2(\Sigma_{\ast})}\|v^3_1\|_{L^2(\Sigma_{\ast})}\big)\\
	&\le C\big(\|\mathfrak{b}^5(t)\|_{L^2(\Sigma_{\ast})}+\|\mathfrak{b}^6(t)\|_{L^2(\Sigma_{\ast})}\big)\|\boldsymbol{v}_1\|_{\mathcal{E}}\\
	&\le C\big(\|\mathfrak{b}^5(t)\|_{L^2(\Sigma_{\ast})}+\|\mathfrak{b}^6(t)\|_{L^2(\Sigma_{\ast})}\big).
	\end{align*}
	As this holds for all $\boldsymbol{v}\in\mathcal{E}$ with $\|\boldsymbol{v}\|_{\mathcal{E}}\le 1$ we deduce for almost all $t\in[0,T]$ that
	\begin{align}
	\|\partial_t \boldsymbol{u}_m(t)\|_{\mathcal{E}^{-1}}\le C\big(\|\mathfrak{f}\|_{\mathcal{L}}+\|\mathfrak{b}^5(t)\|_{L^2(\Sigma_{\ast})}+\|\mathfrak{b}^6(t)\|_{L^2(\Sigma_{\ast})}\big),
	\end{align}
	and thus
	\begin{align}
	\|\partial_t \boldsymbol{u}_m\|_{L^2(0,T, \mathcal{E}^{-1})}^2\le C\big(\|\mathfrak{f}\|_{L^2(0,T; \mathcal{L})}^2+\|\mathfrak{b}^5\|_{L^2(0,T,L^2(\Sigma_{\ast}))}^2+\|\mathfrak{b}^6\|_{L^2(0,T,L^2(\Sigma_{\ast}))}^2\big).
	\end{align}
	So in total we get the energy estimate
	\begin{align}
	\underset{t\in[0,T]}{\max}\left(\|\boldsymbol{u}_m(t)\|_{\mathcal{E}}^2\right)&+\|\boldsymbol{u}_m\|_{L^2(0,T, \mathcal{E})}^2+\|\boldsymbol{v}_m\|_{L^2(0,T, \mathcal{E})}^2+\|\partial_t \boldsymbol{u}_m\|_{L^2(0,T, \mathcal{E}^{-1})}^2\\ \notag
	&\le C\big(\|\mathfrak{f}\|_{L^2(0,T; \mathcal{L})}^2+\|\mathfrak{b}^5\|_{L^2(0,T,L^2(\Sigma_{\ast}))}^2+\|\mathfrak{b}^6\|_{L^2(0,T,L^2(\Sigma_{\ast}))}^2\big). 
	\end{align}
	These energy estimates imply that there are $\boldsymbol{u}\in L^2(0,T; \mathcal{E}), \partial_t\boldsymbol{u}\in L^2(0,T; \mathcal{E}^{-1})$ and $\boldsymbol{v} \in L^2(0,T; \mathcal{E}))$ together with subsequences (which we will identify with the original sequence) of $u_m, u_m'$ and $v_m$ such that 
	\begin{align*}
	\begin{cases}
	\boldsymbol{u}_m\rightharpoonup \boldsymbol{u} & \text{in }L^2(0,T; \mathcal{E}),\\
	\partial_t \boldsymbol{u_m}\rightharpoonup \partial_t\boldsymbol{u} &\text{in }L^2(0,T; \mathcal{E}^{-1}),\\
	\boldsymbol{v}_m\rightharpoonup \boldsymbol{v}&\text{in }L^2(0,T; \mathcal{E}).
	\end{cases}
	\end{align*}
	We note that like in the standard case we get that the weak limit of $\partial_t \boldsymbol{u}_m$ indeed corresponds with the weak time derivative of $\boldsymbol{u}$ by using the definition of a weak time derivative and weak convergence. \\
	Next we want to see that $(\boldsymbol{u}, \boldsymbol{v})$ is the sought weak solution. For any fixed $N\in\N$ and smooth functions $\{d_k\}_{k=1,..., N}$ we get for any $m>N$ that
	\begin{align*}
	\int_0^T\langle \partial_t \boldsymbol{u}_m, \boldsymbol{\zeta}\rangle_{dual,\gamma}dt&= \int_0^T B_u[\boldsymbol{v}_m, \boldsymbol{\zeta};t]+b_u(\boldsymbol{\zeta}; t)+f_u(\boldsymbol{\zeta},t)dt,\\
	\int_0^T(\boldsymbol{v}_m, \boldsymbol{\zeta})_{\mathcal{L}_{\gamma}}dt&=
	\int_0^TB_v[\boldsymbol{u}_m, \boldsymbol{\zeta}; t],
	\end{align*}
	for $\boldsymbol{\zeta}=\sum_{j=1}^Nd_j\boldsymbol{z}_j$. Note that for fixed $\boldsymbol{\zeta}$ each term gives an element of $\left(L(0,T; \mathcal{E})\right)'$ or $\left(L(0,T, \mathcal{E}^{-1})\right)'$ and so we can - using the definition of weak convergence - pass to the limit to get
	\begin{align*}
	\int_0^T\langle \partial_t \boldsymbol{u}, \boldsymbol{\zeta}\rangle_{dual,\gamma}dt&= \int_0^T B_u[\boldsymbol{v}, \boldsymbol{\zeta};t]+b_u(\boldsymbol{\zeta}; t)+f_u(\boldsymbol{\zeta},t)dt,\\
	\int_0^T(\boldsymbol{v}, \boldsymbol{\zeta})_{\mathcal{L}_{\gamma}}dt&=
	\int_0^TB_v[\boldsymbol{u}, \boldsymbol{\zeta}; t].
	\end{align*}
	As the considered test functions $\boldsymbol{\zeta}$ are dense in $L^2(0,T; \mathcal{E})$ this holds for all such functions implying that 
	(\ref{EquationWeakformulationofLinearSDFTJ}) holds for almost all $t\in [0,T]$. Thus, it remains to show that $u(0)=0$, which follows as in the proof of \cite[Theorem 3, p.378]{evansPDE}. Finally, to derive uniqueness of the weak solution we observe that for two solutions $(\boldsymbol{u}_1, \boldsymbol{v}_1), (\boldsymbol{u}_2, \boldsymbol{v}_2)$ the difference $(\bar{\boldsymbol{u}},\bar{\boldsymbol{v}})$ solves (\ref{EquationWeakformulationofLinearSDFTJ}) with $\mathfrak{f}\equiv 0, \mathfrak{b}^5\equiv\mathfrak{b}^6\equiv 0$ and then the energy estimates (\ref{EquationEnergieabschaetzungSchwacheExistenzTripleJunction}) imply $(\bar{\boldsymbol{u}}, \bar{\boldsymbol{v}})=(0,0)$. This finishes the proof.
\end{proof}
For technical reasons in the localization argument we will need a result on higher Sobolev regularity.
\begin{corollary}[$H^3$-regularity of $\boldsymbol{u}$]\label{CorollaryH3regularityWeakSolutionTJ}\ \\
	The solution $\boldsymbol{u}$ found in Proposition \ref{PropostionsExistenceofWeakSolutionsSurfaceDiffsuionTripleJunction} is in $L^2(0,T; H^3_{TJ}(\Gammaast))$ and we have the estimate
	\begin{align}\label{EquationEnhacedRegularityforweaksolutionTripleJunction}
	\|\boldsymbol{u}\|_{L^2(0,T; H^3_{TJ}(\Gammaast))}\le C\left(\sum_{i=1}^3\|\mathfrak{f}^i\|_{L^2(0,T; \mathcal{L})}+\sum_{i=5}^6\|\mathfrak{b}^i\|_{L^2(0,T; L^2(\Sigma_{\ast}))}\right).
	\end{align}	
\end{corollary}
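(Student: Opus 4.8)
The plan is to read the elliptic half of the weak formulation \eqref{EquationWeakformulationofLinearSDFTJ} as a genuine elliptic boundary value problem for $\boldsymbol{u}(t)$ at almost every fixed time $t$, apply elliptic regularity for the triple-junction Laplacian to gain two derivatives, and then integrate the resulting pointwise-in-time estimate over $[0,T]$, using the energy estimate \eqref{EquationEnergyEstiamteWeakSolutionTJ} already established in Proposition \ref{PropostionsExistenceofWeakSolutionsSurfaceDiffsuionTripleJunction}. So first I would fix a $t\in[0,T]$ with $\boldsymbol{u}(t),\boldsymbol{v}(t)\in\mathcal{E}$; this holds for a.e.\ $t$ by definition of the weak solution. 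The identity $(\boldsymbol{\gamma}\boldsymbol{v},\boldsymbol{\psi})_{\mathcal{L}}=B_v[\boldsymbol{u},\boldsymbol{\psi}]$ for all $\boldsymbol{\psi}\in\mathcal{E}$ says exactly that $\boldsymbol{u}(t)$ is a weak solution of the second order elliptic system $-\Dast u^i+C_vu^i=v^i$ on $\Gamma^i_{\ast}$, $i=1,2,3$, subject to $\gamma^1u^1+\gamma^2u^2+\gamma^3u^3=0$ on $\Sigmaast$ (encoded in $\mathcal{E}$) together with the conormal-matching conditions $\partialnuasta{1}u^1=\partialnuasta{2}u^2=\partialnuasta{3}u^3$ on $\Sigmaast$, which appear as the natural boundary conditions because $\boldsymbol{\psi}$ ranges over all of $\mathcal{E}$. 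This is the same coupled problem whose eigenvalue version occurs in \eqref{EquationProofofExistenceofWeakSolution}, so its regularity theory is available (and can be cited from \cite{goesswein2019Dissertation} or \cite{depner2014mean}).

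The core step is the elliptic regularity estimate itself. Away from $\Sigmaast$ each $\Gamma^i_{\ast}$ is a compact manifold without boundary, so the interior $L^2$-theory applied chart by chart already yields $H^3$-bounds on compact subsets. Near $\Sigmaast$ I would flatten the triple junction in local coordinates and run the standard difference-quotient argument in directions tangential to $\Sigmaast$: the three sheets are coupled only through the $\gamma^i$-sum condition and the conormal-matching conditions, and these together with the three interior second order equations constitute a system satisfying the Agmon--Douglis--Nirenberg complementing condition; equivalently, testing with tangential difference quotients controls every second derivative except the purely normal one on each sheet, and that last one is recovered from the equation $-\Dast u^i=v^i-C_vu^i$. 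Bootstrapping this once (from $\boldsymbol{v}(t)\in\mathcal{L}$ to $\boldsymbol{u}(t)\in H^2_{TJ}(\Gammaast)$) and once more (from $\boldsymbol{v}(t)\in\mathcal{H}^1$ to $\boldsymbol{u}(t)\in H^3_{TJ}(\Gammaast)$), and noting that $C_v>0$ rules out a kernel (the homogeneous problem has only the trivial solution, as seen by testing with $\boldsymbol{u}$ and using that the boundary term vanishes on $\mathcal{E}$), gives the a priori bound
\begin{align*}
\|\boldsymbol{u}(t)\|_{H^3_{TJ}(\Gammaast)}\le C\,\|\boldsymbol{v}(t)\|_{H^1_{TJ}(\Gammaast)}\le C\,\|\boldsymbol{v}(t)\|_{\mathcal{E}},
\end{align*}
with $C$ independent of $t$ since the coefficients of $\Dast$ and the constant $C_v$ are time-independent.

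Finally I would square this estimate, integrate over $[0,T]$, and invoke \eqref{EquationEnergyEstiamteWeakSolutionTJ}:
\begin{align*}
\|\boldsymbol{u}\|_{L^2(0,T;H^3_{TJ}(\Gammaast))}\le C\,\|\boldsymbol{v}\|_{L^2(0,T;\mathcal{E})}\le C\Big(\|\mathfrak{f}\|_{L^2(0,T;\mathcal{L})}+\sum_{i=5}^6\|\mathfrak{b}^i\|_{L^2(0,T;L^2(\Sigmaast))}\Big),
\end{align*}
which is \eqref{EquationEnhacedRegularityforweaksolutionTripleJunction}; measurability of $t\mapsto\boldsymbol{u}(t)\in H^3_{TJ}(\Gammaast)$ follows because it is the composition of the measurable map $t\mapsto\boldsymbol{v}(t)$ with the bounded linear solution operator of the elliptic problem. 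The main obstacle is the elliptic regularity step at the triple junction: one has to make sure that the mixed sum-plus-conormal boundary conditions really define an elliptic (Lopatinskii--Shapiro) problem, so that the tangential difference-quotient argument closes and two derivatives are genuinely gained; everything else is bookkeeping plus an application of the energy estimate already proved in Proposition \ref{PropostionsExistenceofWeakSolutionsSurfaceDiffsuionTripleJunction}.
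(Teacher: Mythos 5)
Your proposal follows essentially the same route as the paper: for a.e.\ fixed $t$ one reads $\boldsymbol{u}(t)$ as a weak solution of the second order elliptic system $-\Dast u^i+C_vu^i=v^i$ with the sum and conormal-matching conditions on $\Sigmaast$, gains two derivatives by elliptic regularity (the paper cites Agmon--Douglis--Nirenberg together with the Lopatinskii--Shapiro verification from \cite{depner2014mean} and a localization argument, where you sketch a difference-quotient proof of the same fact), and then integrates $\|\boldsymbol{u}(t)\|_{H^3_{TJ}(\Gammaast)}\le C\|\boldsymbol{v}(t)\|_{\mathcal{E}}$ in time and applies the energy estimate \eqref{EquationEnergyEstiamteWeakSolutionTJ}. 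This matches the paper's proof, so the argument is correct and not genuinely different.
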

\begin{proof}
	For every $t\in[0,T]$, the function $\boldsymbol{u}$ is a weak solution of the elliptic problem
	\begin{align*}
	-\Dast u^i+C_vu^i&=v^i &   &\text{on }\Gamma^i_{\ast}, i=1,2,3,\\
	\gamma^1u^1+\gamma^2u^2+\gamma^3u^3&=0   &  &\text{on }\Sigma_{\ast},\\
	\partial_{\nu^1_{\ast}}u^1-\partial_{\nu^2_{\ast}}u^2&=0 & &\text{on }\Sigma_{\ast},\\
	\partial_{\nu^2_{\ast}}u^2-\partial_{\nu^3_{\ast}}u^3&=0 & &\text{on }\Sigma_{\ast}.
	\end{align*}
	As we have $\boldsymbol{v}\in W^{1,2}_{TJ}(\Gamma)$ and it was shown in \cite[Lemma 3]{depner2014mean} that the Lopatinskii-Shapiro conditions for these boundary conditions are satisfied, we may apply elliptic regularity theory from \cite{agmondouglisNi1959estimates} together with a localization argument like in \cite[Section 4]{depner2014mean} to get $\boldsymbol{u}(t)\in W^{3,2}_{TJ}(\Gamma)$ and
	\begin{align*}
	\|\boldsymbol{u}(t)\|_{H^3_{TJ}(\Gammaast)}\le \|\boldsymbol{v}(t)\|_{\mathcal{E}}.
	\end{align*}
	Then, we can use (\ref{EquationEnergyEstiamteWeakSolutionTJ}) to conclude
	\begin{align*}
	\|\boldsymbol{u}\|_{L^2(0,T; H^3(\Gammaast))}\le\|\boldsymbol{v}\|_{L^2(0,T;\mathcal{E})}\le C\left(\sum_{i=1}^3\|\mathfrak{f}^i\|_{L^2(0,T; \mathcal{L})}+\sum_{i=5}^6\|\mathfrak{b}^i\|_{L^2(0,T; L^2(\Sigma_{\ast}))}\right).
	\end{align*}
	This shows the desired estimate.
\end{proof}

\subsection{Local Schauder Estimates}\label{SubsectionSchauderEstimates}
\label{SubsectionLocalSchauderregularity}
Now we want to show that the weak solution found in the last section is actually in $C^{4+\alpha,1+\alpha/4}_{TJ}(\Gammaast\times[0,T])$ for every $T>0$. This splits into two parts. First we will localize the problem and verify that the localization has a unique solution in $C^{4+\alpha,1+\alpha/4}$. Then, we will connect the localization with our weak solution from Proposition \ref{PropostionsExistenceofWeakSolutionsSurfaceDiffsuionTripleJunction} using a compactness argument. We will discuss only the situation locally around points on the triple junction. In the interior of any of the $\Gamma^i$ the problem reduces to an equation solely for $u^i$ with vanishing boundary conditions. Details for this can be found in \cite[Proposition 3.8]{goesswein2019Dissertation}.
\\
Fix any $\sigma\in\Sigmaast$. We construct a special parametrization as follows to make the localized problem more comfortable to deal with. We choose $\delta>0$ sufficiently small such that for $i=1,2,3$ the projection on $\Sigma_{\ast}$ is well defined on $V^i=B_{\delta}(\sigma)\subset \Gamma_{\ast}^i$. For $V_{\Sigma}:=B_{\delta}(\sigma)\subset \Sigma_{\ast}$ we choose $R>0$ together with $U=B_R(0)\cap \R^n_+$ and any parametrization $\varphi: U\cap \{x\in\R^n|x_n=0\}\to V_{\Sigma}$. Now we extend this $\varphi$ to diffeomorphisms $\varphi^i: U\to V^i$ by the distance function. To be precise this induces diffeomorphisms
\begin{align}\label{EquationParmaetrizationnearboundary}
\varphi^i: U\mapsto V_i, (x, d)\mapsto \gamma_{-\nu_{\ast}^i}(\varphi(x), d),
\end{align} 
where $(x,0)\in U\cap \{x\in\R^n|x_n=0\}, (x,d)\in U$ and $\gamma_{-\nu_{\ast}^i}(\sigma, d)$ denotes the evaluation of the geodesic through a point $\sigma\in\Sigmaast$ in direction $-\nu_{\ast}^i$ at distance $d$ . Note that for these parametrizations it holds that
\begin{align}\label{EquationPropertiesofCoordinatesforLopatinski} 
g^i_{nn}=1,\quad g^i_{nl}=0\text{    for } l\neq n,
\end{align}
and the same holds for the inverse metric tensor
due to to the inverse matrix formula for matrices in block form. We now want to study problem (\ref{EquationLinearisedSDFTJWeakPart}) localized on $B_R(0)$. 
Using the notation
\begin{align*}
C:=\partial U\cap\{x_n=0\},\quad S:=\partial U \backslash C,
\end{align*} 
we see that for the localization of (\ref{EquationLinearisedSDFTJWeakPart}) we only have boundary conditions on $C$. In order to get a well-posed problem we have to do a cut-off away from $S$ so we choose $\varepsilon<\frac{R}{2}$ together with a cut-off function $\eta$ with
\begin{align}\label{EquationFirstDefinitionCutOffEta}
\eta\in C^{\infty}(U),\quad \supp(\eta)\subset U\cap B_{R-\varepsilon}(0),\quad \eta\equiv 1\text{ on }U\cap B_{\varepsilon}(0).
\end{align}
 In the following, we will write for this cut-off of the parametrized function $\boldsymbol{u}$ again to keep notation simple. Now, we want to consider the problem induced by $(LSDFTJ)_P$, that is
\begin{align*}
(LLP)\begin{cases}
\partial_t u^i+ \sum\limits_{j,k,l,m=1}^ng_i^{jk}g_i^{lm}\partial_{jklm}u^i+\mathcal{A}^i_L(u_i)=f^i & \text{ on }U\times[0,T],\quad i=1,2,3,\\
\gamma^1u^1+\gamma^2u^2+\gamma^3u^3=0 & \text{ on }C\times[0,T],\\
\partial_n u^1-\partial_n u^2+\mathcal{B}_{2}(u^1,u^2)=0 &\text{ on }C\times [0,T],\\
\partial_n u^2-\partial_n u^3+\mathcal{B}_{3}(u^2,u^3)=0 &\text{ on }C\times [0,T],\\
\sum\limits_{i=1}^3\ \sum\limits_{j,k=1}^n\gamma^ig^{jk}\partial_{jk} u^i+\mathcal{B}_4(u^1, u^2, u^3)=0 & \text{ on }C\times[0,T],\\
\sum\limits_{j,k=1}^n\big(g^{jk}_1\partial_{njk}u^1-g^{jk}_2\partial_{njk}u^2\big)+\mathcal{B}_5(u^1, u^2)=\mathfrak{b}^5 &\text{ on }C\times[0,T],\\
\sum\limits_{j,k=1}^n\big(g^{jk}_2\partial_{njk}u^2-g^{jk}_3\partial_{njk}u^3\big)+\mathcal{B}_6(u^2, u^3)=\mathfrak{b}^6 &\text{ on }C\times[0,T],\\
u^i=0 & \text{ on }S\times[0,T],\quad i=1,2,3,\\
\Delta u^i=0 & \text{ on }S\times[0,T],\quad i=1,2,3,\\
u^i\big|_{t=0}=u_0 &\text{ on }U, i=1,2,3.
\end{cases}
\end{align*}
Here, we used that due to the choice of our coordinates the derivative in direction of $\nu_{\ast}^i$ is given by $-\partial_n$ on $C$. Also, we only need the highest order terms for the following discussion. All other terms are written in $\mathcal{A}_L$ and $\mathcal{B}_j$. We do not allow inhomogeneities for the first four boundary conditions as we also cannot have them in our system yet. Actually, the following analysis for $(LLP)$ would also admit these inhomogeneities. The boundary conditions on $S$ are relatively arbitrary as $\boldsymbol{u}$ vanishes near $S$ and so fulfils any linear boundary condition. Note that the boundary condition on $S$ and $C$ are compatible as $\bu$ solves both near $\partial S\cap\partial C$ due to the cut-off procedure. As a final remark we want to mention that we will need better properties for $\eta$ later to guarantee that no inhomogeneities in the lower order boundary conditions of $(LLP)$ will arise.  But we will discuss this in the second half of this section, when we connect the localization with the weak solution. Now, we want to show that $(LLP)$ fulfills the prerequisites to apply \cite[Theorem 4.9]{ladyzhenskaia1968linear}.
\begin{remark}[Regularity of $\partial U$]\ \\
	To be precise we will need smoothness of $\partial U$ for the analysis now. But as we cut off the problem away from $S$ we may assume w.l.o.g. that the problem is indeed defined on such a domain.
\end{remark} 
Firstly, we want to see that for the system above the basic requirements described on page 8 of \cite{ladyzhenskaia1968linear} hold. Setting $s_i=0$ and $t_i=4$ for $i=1,2,3$, the degree conditions for the differential operators are fulfilled. For the parabolicity condition we see that $b=2$ and
\begin{align*}
\mathcal{L}_0(x,t, \zeta,p)&=\text{diag}(p+ |\zeta|_{g_{1}}^4, p+|\zeta|_{g_2}^4, p+|\zeta|_{g_3}^4),\\
L(x,t,\zeta,p)             &=\prod_{i=1}^3(p+|\zeta|_{g_i}^4),
\end{align*}
for any $p\in\C, \zeta\in\R^n$, where $|\cdot|_{g_i}$ denotes the norm induced by the inverse metric tensor $g_i^{\ast}$. Thus, the roots of $L$ with respect to $p$ are precisely $p_i=-|\zeta|_{g_i}^4$, for which one has
\begin{align*}
p_i\le-c^4|\zeta|^{2b},
\end{align*}
for a suitable $c>0$ fulfilling $c|\zeta|\le|\zeta|_{g_i}$ for $i=1,2,3$, which exists due to the equivalency of $|\cdot|$ and $|\cdot|_{g_i}$. Note that as the inverse metric tensor is bounded, $c$ can be chosen independently of $x$ and thus the equations is even uniformly parabolic with $b=2$.\\
For the boundary conditions we get the following numbers.
\begin{center}
	\begin{tabular}{c|ccc|c}
		$\beta_{qj}$   &    1 &     2    &     3 & $\sigma_q$ \\ \hline
		1            &    0 &     0    &     0 & -4 \\
		2            &    1 &     1    &     0 &  -3  \\
		3            &    0 &     1    &     1 & -3  \\
		4            &    2 &     2    &     2 & -2\\
		5            &    3 &     3    &     0 & -1 \\
		6            &    0 &     3    &     3 & -1  
	\end{tabular}
\end{center}
This means that the number $l$ used in Theorem 4.9 can be an arbitrary number larger than 
\begin{align*}
\max\{0, \sigma_1,\cdots, \sigma_6\}=0,
\end{align*}
which works for us as we will need $l$ to be the H\"older-continuity of the space derivatives, so an $\alpha\in(0,1)$. Hence, we see that (LLP) is indeed of the form covered by the theory of \cite{ladyzhenskaia1968linear} and so we now have to check the complementary and compatibility conditions. \\
The complementary conditions are the typical Lopatinski-Shapiro conditions. We prefer here to 
work with their ODE-version, which can be found in \cite{latushkin2006stable}. A proof, that this version is equivalent to the original version like in \cite{ladyzhenskaia1968linear}, can be found in \cite[Section I.2]{eidelman2012parabolic}. In our situations, the Lopatinski-Shapiro conditions read as follows. We need to verify that for all 
\begin{align*}
\zeta'\in \R^{n-1}, \lambda\in\{z\in\C|\Re(z)\ge 0\}\text{ with }(\lambda,\zeta')\neq(0,0)
\end{align*} the only solution $\boldsymbol{\phi}=(\phi_1, \phi_2, \phi_3)$ in $C_0(\R, \C^3)$ of the ODE-system
\begin{align}
\lambda\phi_i+|\zeta'|^4
_{g_i}\phi_i+\phi_i''''&=0, & &y>0,\quad i=1,2,3,\notag\\
\sum_i^3\gamma^i\phi_i&=0& &y=0,\notag\\
\phi_1'=\phi_2'&=\phi_3' & &y=0,\label{EquationLopatinskiShapiroSystem}\\
\sum_{i=1}^3\gamma^i\big(|\zeta'|_{g_i}^2\phi_i-\phi_i''\big)&=0 & &y=0,\notag\\
\phi_1'''-\phi_2'''-|\zeta'|^2_{g_1}\phi_1'+|\zeta'|^2_{g_2}\phi_2'&=0  & &y=0,\notag\\
\phi_2'''-\phi_3'''-|\zeta'|^2_{g_2}\phi_2'+|\zeta'|^2_{g_3}\phi_3'&=0 & &y=0\notag,
\end{align} 
is $\boldsymbol{\phi}\equiv0$. To prove this we use a straightforward energy method. We first note that due to the structure of the differential equation, all solutions are linear combinations of functions of the form $\exp(\sqrt[4]{-\lambda-|\zeta'|^4}y)$. If such functions converge to $0$ for $y\to\infty$, then all their derivatives converge, too. 
Now, testing the first equation in \eqref{EquationLopatinskiShapiroSystem} with $\gamma^i\overline{\phi_i}$, summing over $i=1,2,3$, integrating by parts and using the boundary conditions for $\phi_i$ and its derivatives gets us to
\begin{align}
\sum_{i=1}^3\gamma^i(\lambda+|\zeta'|^4_{g_i})\int_0^{\infty}|\phi_i|^2dy+\sum_{i=1}^3\gamma^i\int_0^{\infty}|\phi_i''|^2dy+2\mathfrak{Im}\left(\phi_1'\sum_{i=1}^3|\zeta'|^2_{g_i}\gamma^i\overline{\phi}_i\right)=0.
\end{align}
As $\lambda$ has a non-negative real part all terms of the left-hand-side have non-negative real part. For $\zeta'\neq 0$ we get therefore
\begin{align}\label{EquationLopatinskiShapiro4}
\int_0^{\infty}|\phi_i|^2dy=0, \quad i=1,2,3,
\end{align} 
which already implies $\boldsymbol{\phi}\equiv 0$. For $\zeta'=0$ the equation above reduces to
\begin{align}
\sum_{i=1}^3\gamma^i\lambda\int_0^{\infty}|\phi_i|^2dy+\sum_{i=1}^3\int_0^{\infty}|\phi_i''|^2dy=0,
\end{align}
as $|\cdot|_{g_i}$ is also a norm on $\R^n$. If $\mathfrak{Re}(\lambda)\neq 0$ we can argue as before. Otherwise it has to hold that $\mathfrak{Im}(\lambda)\neq 0$ and as the second sum is real this again implies \eqref{EquationLopatinskiShapiro4}  and hence we showed $\boldsymbol{\phi}\equiv 0$. Consequently, the Lopatinskii-Shapiro conditions are fulfilled on $\partial U$.\\
It now remains to consider the compatibility conditions according to \cite[p. 98]{ladyzhenskaia1968linear}. As we choose $l=\alpha<1$, we only need compatibility conditions of order $0$. Due to the values of $\sigma_q$ only for the first boundary equation $i_q$ can take the values $0$ and $1$. For all other $i_q$ we just get the trivial compatibility condition that the boundary condition has to be fulfilled for $\boldsymbol{u}=0$. Therefore, using the first line in $(LLP)$ the only non-trivial compatibility condition is 
\begin{align}
0=\sum_{i=1}^3\gamma^i\partial_t u^i=\sum_{i=1}^3\gamma^i\mathfrak{f}^i.
\end{align}
 With all this checked we can use \cite[Theorem 4.9]{ladyzhenskaia1968linear} to get the following existence result for $(LLP)$.
\begin{proposition}[Schauder Estimates for (LLP)]\label{PropositionSchauderestimatesforlocalizedSurfaceDiffusionnearTripleJunction}\ \\
	The system (LLP) has a unique solution $\boldsymbol{u}\in C^{4+\alpha, 1+\frac{\alpha}{4}}(U\times[0,T])^3$ if and only if the compatibility conditions
	\begin{align}
	\mathfrak{b}^5=\mathfrak{b}^6&=0  &   &\text{ on }C\times\{0\}, \\
	\sum_{i=1}^{3}\gamma^i\mathfrak{f}^i&=0 & &\text{ on }C \times\{0\},
	\end{align}
	are fulfilled.
\end{proposition}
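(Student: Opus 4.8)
The plan is to read the statement off the general parabolic Schauder theory, namely \cite[Theorem 4.9]{ladyzhenskaia1968linear}, once all of its structural hypotheses have been checked --- which is exactly what the discussion preceding the statement accomplishes. Concretely, with the weights $s_i=0$, $t_j=4$ for $i,j=1,2,3$ and the boundary weights $\sigma_q$ of the table above, the order conditions on the differential operators hold; the system is uniformly parabolic with $b=2$ since the characteristic roots are $p_i=-|\zeta'|_{g_i}^4\le -c^4|\zeta'|^{4}$ uniformly in $x$; the top-order coefficients $g_i^{jk}g_i^{lm}$ are time-independent and of class $C^{3+\alpha}$ in space (the cluster $\Gammaast$ being $C^{5+\alpha}$ and the parametrization \eqref{EquationParmaetrizationnearboundary} being built from geodesics), while the lower-order coefficients collected in $\mathcal{A}^i_L$ and the $\mathcal{B}_q$ only need the much milder, order-dependent regularity demanded in the respective rows; the complementary (Lopatinskii--Shapiro) condition on the $C$-part of $\partial U$ is the ODE system \eqref{EquationLopatinskiShapiroSystem}, already shown to admit only $\boldsymbol{\phi}\equiv 0$ via the energy argument, and on the $S$-part the conditions $u^i=\Delta u^i=0$ are the classical Navier conditions, for which Lopatinskii--Shapiro is standard, with the cut-off making the $S$- and $C$-conditions agree near $\partial S\cap\partial C$. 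Hence \cite[Theorem 4.9]{ladyzhenskaia1968linear} applies and yields unique solvability in $C^{4+\alpha,1+\frac{\alpha}{4}}(U\times[0,T])^3$ together with the accompanying a priori estimate, provided the compatibility conditions of the relevant order are met.

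It then remains to make those compatibility conditions explicit. Since $l=\alpha<1$, for a boundary operator of weight $\sigma_q$ one must match $\partial_t^k$ of both sides at $t=0$ only for $4k<\alpha-\sigma_q$, i.e. for $k=0$ only when $q\in\{2,\dots,6\}$, and for $k\in\{0,1\}$ when $q=1$. Recalling that in the reduced problem \eqref{EquationReducedLinearProblem} the initial datum vanishes, $u_0\equiv 0$: the $k=0$ conditions for $q=1,2,3,4$ are automatic (the operators $\mathcal{B}^1,\dots,\mathcal{B}^4$ are homogeneous and the first four boundary inhomogeneities are zero), while for $q=5,6$ they read $\mathfrak{b}^5\big|_{t=0}=\mathfrak{b}^6\big|_{t=0}=0$ on $C$. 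The remaining $k=1$ condition for $q=1$ is obtained by differentiating $\gamma^1u^1+\gamma^2u^2+\gamma^3u^3=0$ in time and inserting $\partial_tu^i=-\Dast\Dast u^i+\mathfrak{f}^i$ at $t=0$; the biharmonic terms drop because $u_0\equiv 0$, leaving $\sum_{i=1}^3\gamma^i\mathfrak{f}^i=0$ on $C\times\{0\}$. These are precisely the three conditions in the statement. The converse is immediate: a solution of class $C^{4+\alpha,1+\frac{\alpha}{4}}$ has all its boundary traces and the equation itself continuous up to $t=0$, so it forces the data to obey exactly these identities; thus they are necessary as well.

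I expect the only genuinely fiddly point to be the regularity bookkeeping for the lower-order coefficients: one has to confirm that the terms produced by linearising the geometric quantities (Lemma \ref{LemmaSurfaceDiffusionTriplePointsLinearisationGeometricQuantities}), re-expressed in the coordinates \eqref{EquationParmaetrizationnearboundary} and including the factors from the non-local tangential part, meet the order-by-order smoothness thresholds of \cite[Theorem 4.9]{ladyzhenskaia1968linear} for $l=\alpha$. This is routine since those thresholds decrease with the order of the term and the reference cluster is $C^{5+\alpha}$; the one substantive hypothesis, the Lopatinskii--Shapiro condition, has already been verified, so everything else is a direct application of the cited theorem.
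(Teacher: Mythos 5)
Your proposal is correct and follows essentially the same route as the paper: the proposition is obtained by applying \cite[Theorem 4.9]{ladyzhenskaia1968linear} after the preceding verification of the order/parabolicity conditions, the Lopatinskii--Shapiro conditions (trivial on $S$ because of the cut-off), and the order-zero compatibility conditions, with the only non-trivial one coming from differentiating the first boundary condition in time and using the bulk equation at $t=0$, exactly as you do.
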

In the next step we want to connect $(LLP)$ with the weak solution of $(LSDFTJ)_P$. For this, we consider the cut-off of solutions of \eqref{EquationWeakformulationofLinearSDFTJ} via the function $\eta$ from \eqref{EquationFirstDefinitionCutOffEta}. This gives us a new problem for which we derive the existence of unique solutions immediately. The solutions coincides with a cut-off of the solution $(\bu,\bw)$ constructed in Proposition \ref{PropostionsExistenceofWeakSolutionsSurfaceDiffsuionTripleJunction}. We then approximate the problem by smoothing the arising inhomogeneities. For this problem we may apply Proposition \ref{PropositionSchauderestimatesforlocalizedSurfaceDiffusionnearTripleJunction}. Taking the limits we derive H\"older-regularity for $\bu$, which finishes the analysis of \eqref{EquationReducedLinearProblem}.\\
Consider now a fixed point $(\sigma, t)\in \Sigma_{\ast}\times [0,T]$. Now we need more prerequisites for $\eta$ from $\eqref{EquationFirstDefinitionCutOffEta}$ as we have to guarantee that our cut-off procedure will not produce inhomogeneities in the linearized angle conditions or the linearization of $\mathcal{B}_4$. We choose neighborhoods $Q_4^i$ of $\sigma\in\Gamma_{\ast}^i$ such that the projections $\pr^i_{\Sigma_{\ast}}$ are well defined and the intersections $Q_4^i\cap\Sigma_{\ast}$ equal for all $i=1,2,3$ a common set $Q_4^{\Sigmaast}\subset\Sigma_{\ast}$. We choose now a cut-off function $\eta_{\Sigmaast}\in C^{\infty}(Q_4^{\Sigmaast})$, such that $\eta_{\Sigmaast}$ has compact support in $Q_4^{\Sigma_{\ast}}$ and $\eta_{\Sigmaast}\equiv 1$ on a neighborhood of $\sigma$ in $Q_4^{\Sigma_{\ast}}$. Now, we extend $\eta_{\Sigmaast}$ via a cut-off of the distance function on $Q_4^i$. Precisely, we choose a cut-off function $\eta_d: [0,1]\to [0,1]$ with $\supp(\eta_d)\subset [0,\varepsilon)$ for some $\varepsilon<1$, $\eta_d\equiv 1$ on a closed interval containing $0$ and such that for all $i=1,2,3$ the function 
\begin{align}\label{EquationDefinitionCutofffunctionTriplejunction}
\eta^i:=\eta_d\eta_{\Sigma}: Q_4^i&\to [0,1],\\
x&\mapsto \eta_d(\dist_{\Sigma_{\ast}}(x))\eta_{\Sigma}(\pr^i_{\Sigma_{\ast}}(x)),  \notag
\end{align}
has compact support $Q_3^i$ in $Q_4^i$. Note that for any point $x\in Q^{\Sigma}$ we have now 
\begin{align}\label{EquationNormalderivativeofcutofffunction}
\partial_{\nu^i_{\ast}}\eta^i(x)&=0 & i&=1,2,3.
\end{align}
 This is exactly what we need for our analysis. We will need some additional notation now and set
\begin{align*}
C_l^i:=\partial Q_l^i\cap \Sigma_{\ast},\quad S_l^i:=\partial Q_l^i\backslash C_l^i,\quad l=1,2,3,4,\ i=1,2,3.
\end{align*}   
Hereby, the sets $Q_2^i$ and $Q_1^i$ will be constructed later and as $Q^i_l\cap\Sigmaast=Q_l^{\Sigmaast}$ for $i=1,2,3$ we have that $C^i_l=C_l$ for $i=1,2,3$. Following our plan, we now want to derive a PDE for $\widetilde{\boldsymbol{u}}=\boldsymbol{\eta u}$. As $\boldsymbol{u}$ is a weak solution of (\ref{EquationLinearisedSDFTJWeakPart}) we calculate formally
\begin{align}
\partial_t\widetilde{u}^i&=\eta^i\partial_t u^i+(\partial_t \eta^i)u^i &&\text{on }Q_4^i\times[0,T],\ &&i=1,2,3,\notag\\
\Dast\widetilde{u}^i&=\eta^i\Dast u^i+2\langle\gast\eta^i, \gast u^i\rangle+(\Dast\eta^i)u^i &&\text{on }Q_4^i\times[0,T],\ &&i=1,2,3,\notag\\
-\Dast\Dast\widetilde{u}^i&=-\eta^i\Dast\Dast u^i-4\Dast\langle \nabla_{\ast}\eta^i, \nabla_{\ast}u^i\rangle\label{EquationIdentitiesfortildeuTripleJunction}\\
&\phantom{=\ }+2\Dast u^i\Dast\eta^i-u^i\Dast\Dast\eta^i  &&\text{on }Q_4^i\times[0,T],\ &&i=1,2,3,\notag\\
\partial_{\nu^i_{\ast}}\widetilde{u}^i&=\eta^i\partial_{\nu_{\ast}^i}u^i+u^i\partial_{\nu_{\ast}^i}\eta^i=\eta^i\partial_{\nu_{\ast}^i}u^i &&\text{on }Q_4^{\Sigmaast}\times[0,T],\ &&i=1,2,3.\notag
\end{align} 
From this we see directly that $\widetilde{\boldsymbol{u}}$ solves formally
\begin{align*}
\partial_t\widetilde{u}^i=-\Dasti\Dasti\widetilde{u}^i+C_v\Dasti\widetilde{u}^i+C_u\Dasti\widetilde{u}^i-C_vC_u\widetilde{u}^i+\widetilde{\mathfrak{f}}^i,
\end{align*}
with the new inhomogeneity
\begin{align*}
\widetilde{\mathfrak{f}}^i=\eta^i\mathfrak{f}^i&+(\partial_t\eta^i)u^i+
4\Dast(\langle\gast\eta^i, \gast u^i\rangle)-2\Dast u^i\Dast\eta^i\\
&+u\Dast\Dast\eta^i-2(C_u+C_v)\langle\gast\eta^i,\gast u^i\rangle-(C_u+C_v)\Dast\eta^i\cdot u^i.
\end{align*}
Observe that due to the $L^2(0,T; H^3)$-regularity of $u^i$ from Corollary \ref{CorollaryH3regularityWeakSolutionTJ} we have that $\widetilde{\mathfrak{f}}^i\in L^2(0,T; L^2)$. To get a formulation for which we can get unique existence of a weak solution we need to do a split again. Therefore, we write the equation as
\begin{align*}
\partial_t \widetilde{u}^i&=\Dasti \widetilde{v}^i-C_u\widetilde{v}^i+\widetilde{\mathfrak{f}}^i,\\
\widetilde{v}^i           &=-\Dasti\widetilde{u}^i+C_v\widetilde{u}^i.
\end{align*}
From (\ref{EquationIdentitiesfortildeuTripleJunction}) we deduce for the first order boundary conditions for $\widetilde{u}^i$ that
\begin{align}
\partialnuasta{i}\widetilde{u}^i-\partialnuasta{j}\widetilde{u}^j=\eta_{\Sigmaast}(\partialnuasta{i}u^i-\partialnuasta{j}u^j)=0,
\end{align}
where we used that $\bu$ solves in a weak sense $\eqref{EquationLinearisedSDFTJWeakPart}_4$ resp. $\eqref{EquationLinearisedSDFTJWeakPart}_5$. Also, $\widetilde{\bu}$ inherits the sum condition $\eqref{EquationLinearisedSDFTJWeakPart}_3$ from $\bu$ as the $\eta^i$ coincide on $Q_4^{\Sigmaast}$.
Hence, we get the boundary conditions 
\begin{align*}
\gamma^1\widetilde{u}^1+\gamma^2\widetilde{u}^2+\gamma^3\widetilde{u}^3&=0 & &\text{on }C_4\times [0,T],\\
\partialnuasta{1}\widetilde{u}^1-\partialnuasta{2}\widetilde{u}^2&=0 & &\text{on }C_4\times[0,T],\\
\partialnuasta{2}\widetilde{u}^2-\partialnuasta{3}\widetilde{u}^3&=0 & &\text{on }C_4\times[0,T].
\end{align*}
Now we have to determine the boundary conditions fulfilled by $\widetilde{\bv}$. First we note that on $C_4\times[0,T]$ it holds that
\begin{align*}
\sum_{i=1}^3\gamma^i\widetilde{v}^i&=\sum_{i=1}^3\gamma^i(-\Dast\widetilde{u}^i+C_v\widetilde{u}^i)\\
&=\sum_{i=1}^3\gamma^i\left(-\eta^i\Dast u^i+\eta^i C_vu^i-2\langle\gast\eta^i,\gast u^i\rangle-(\Dast\eta^i)u^i\right)\\
&=\eta^1\sum_{i=1}^3\gamma^iv^i-\Dast\eta^1\sum_{i=1}^3\gamma^iu^i-2\left\langle\gast\eta^1, \sum_{i=1}^3\gamma^i\gast u^i\right\rangle=0.
\end{align*}
Here, we used in the third identity that the $\eta^i$, $\gast\eta^i$ and $\Dast\eta^i$ equal on $C_4$. For the last identity observe that the first two summands vanish due to the boundary conditions for $\boldsymbol{u}$ and $\boldsymbol{v}$. For the last summand we note that due to \eqref{EquationParmaetrizationnearboundary} we get that
\begin{align}
\gast u^i=\nabla_{\Sigmaast}u^i+(\partialnuasta{i}u^i)\nu_{\ast}^i.
\end{align}
Furthermore, $\sum_{i=1}^3\gamma^iu^i=0$ implies by differentiating also $\sum_{i=1}^3\gamma^i\nabla_{C_4} u^i=0$. Thus, using the boundary conditions for $\boldsymbol{u}$ and the angle conditions for the reference surface we calculate
\begin{align}
\sum_{i=1}^3\gamma^i\gast u^i=\sum_{i=1}^3\gamma^i\nabla_{C_4}u^i+\sum_{i=1}^3\gamma^i(\partialnuasta{i}u^i)\nuast^i=\partialnuasta{1}u^1\sum_{i=1}^3\gamma^i\nuast^i=0.
\end{align}
In total, this proves
\begin{align}
\sum_{i=1}^3\gamma^i\widetilde{v}^i&=0 & &\text{on }C_4\times[0,T].
\end{align}
It remains to study the Neumann-type boundary conditions for $\widetilde{v}^i$. For this observe that \begin{align}
\widetilde{v}^i=\eta^i v^i-(\Dast\eta^i)u^i-2\langle\gast\eta^i, \gast u^i\rangle,
\end{align}
and so
\begin{align*}
\partialnuasta{i}\widetilde{v}^i=(\underbrace{\partialnuasta{i}\eta^i}_{=0})v^i+\eta^i(\partialnuasta{i}v^i)-(\partialnuasta{i}\Dast\eta^i)u^i+(\Dast \eta^i)\partialnuasta{i}u^i
-2\langle\partialnuasta{i}\gast\eta^i, \gast u^i\rangle-2\langle\gast\eta^i, \partialnuasta{i}\gast u^i\rangle.
\end{align*}
With this, we get on $C_4$ the boundary conditions
\begin{align}
\partialnuasta{1}\widetilde{v}^1-\partialnuasta{2}\widetilde{v}^2&=\widetilde{\mathfrak{b}}^5, & &\text{on }C_4\times[0,T],\\
\partialnuasta{2}\widetilde{v}^2-\partialnuasta{3}\widetilde{v}^3&=\widetilde{\mathfrak{b}}^6, & &\text{on }C_4\times[0,T],
\end{align}
with
\begin{align*}
\widetilde{\mathfrak{b}}^5&=\eta\mathfrak{b}^5-(\partialnuasta{1}\Dast \eta^1)u^1-2\langle\partialnuasta{1}\gast\eta^1, \gast u^1\rangle-2\langle\gast\eta^1, \partialnuasta{1}\gasta{1}u^1\rangle\\
&+(\partialnuasta{2}\Dast\eta^2)u^2+2\langle \partialnuasta{2}\gast\eta^2,\gast u^2\rangle+2\langle\gast\eta^2,\partialnuasta{2}\gast u^2\rangle,\\
\widetilde{\mathfrak{b}}^6&=\eta\mathfrak{b}^6-(\partialnuasta{2}\Dast \eta^2)u^2-2\langle\partialnuasta{2}\gast\eta^2, \gast u^2\rangle-2\langle\gast\eta^2, \partialnuasta{2}\gast u^2\rangle\\
&+(\partialnuasta{3}\Dasta{3}\eta^3)u^3+2\langle \partialnuasta{3}\gast\eta^3,\gast u^3\rangle+2\langle\gast\eta^3, \partialnuasta{3}\gast u^3\rangle.
\end{align*}
Here, we used that
\begin{align*}
\left(\Dast\eta\right)(\partialnuasta{1}u^1-\partialnuasta{2}u^2)=\left(\Dast\eta\right)(\partialnuasta{2}u^2-\partialnuasta{3}u^3)=0 & &\text{on }C_4.
\end{align*}
Therefore, we get the following problem for $\widetilde{\boldsymbol{u}}$:
\begin{align}
\partial_t \widetilde{u}^i-\Dast \widetilde{v}^i+C_u\widetilde{v}^i&=\widetilde{\mathfrak{f}}^i & & \text{on }Q_4^i\times[0,T], &&i=1,2,3,\notag\\
\widetilde{v}^i+\Dast\widetilde{u}^i-C_v\widetilde{u}^i&=0 & &\text{on }Q_4^i\times[0,T],  &&i=1,2,3,\notag\\
\gamma^1\widetilde{u}^1+\gamma^2\widetilde{u}^2+\gamma^3\widetilde{u}^3&=0 & &\text{on }C_4\times [0,T],\notag\\
\partialnuasta{1}\widetilde{u}^1-\partialnuasta{2}\widetilde{u}^2&=0 & &\text{on }C_4\times[0,T],\notag\\
\partialnuasta{2}\widetilde{u}^2-\partialnuasta{3}\widetilde{u}^3&=0 & &\text{on }C_4\times[0,T], \notag\\
\gamma^1\widetilde{v}^1+\gamma^2\widetilde{v}^2+\gamma^3\widetilde{v}^3&=0 & &\text{on }C_4\times[0,T],\label{EquationSystemforLoacalSchauderEstimates}\\
\partialnuasta{1}\widetilde{v}^1-\partialnuasta{2}\widetilde{v}^2&=\widetilde{\mathfrak{b}}^5 & &\text{on }C_4\times[0,T],\notag\\
\partialnuasta{2}\widetilde{v}^2-\partialnuasta{3}\widetilde{v}^3&=\widetilde{\mathfrak{b}}^6 & &\text{on }C_4\times[0,T],\notag\\
\widetilde{u}^i&=0 & &\text{on }S_4\times[0,T], &&i=1,2,3,\notag\\
\widetilde{v}^i&=0 & &\text{on }S_4\times[0,T], &&i=1,2,3,\notag\\
\widetilde{u}^i&=0 & &\text{on }Q_4^i\times\{0\}, &&i=1,2,3,\notag
\end{align}
with the inhomogeneities $\widetilde{\mathfrak{f}}^i, \widetilde{\mathfrak{b}}^5$ and $\widetilde{\mathfrak{b}}^6$ chosen as above. Note hereby that due to the chosen support of $\boldsymbol{\eta}$ we have
\begin{align}\label{EquationRegularitatftilde}
\widetilde{\mathfrak{f}}^i\big|_{Q_3^i\times[0,T]}&=\mathfrak{f}^i\in C^{\alpha,\frac{\alpha}{4}}(Q_3^i\times[0,T]),& i&=1,2,3,\\
\widetilde{\mathfrak{b}}^i\big|_{C_3^i\times[0,T]}&= \mathfrak{b}^i\in C^{1+\alpha, (1+\alpha)/4}(C_3\times[0,T]),& i&=5,6,\notag
\end{align}
and furthermore
\begin{align}
\widetilde{\mathfrak{f}}^i\in L^2(Q_4\times[0,T]), \widetilde{\mathfrak{b}}^5\in L^2(C_4\times[0,T]), \widetilde{\mathfrak{b}}^6\in L^2(C_4\times[0,T]),
\end{align}
and additionally we have the estimates
\begin{align}
\|\widetilde{\mathfrak{f}}^i\|_{L^2(Q_4^i\times[0,T])}&\le C\left(\|\mathfrak{f}^i\|_{L^2(Q_4^i\times[0,T])}+\|u^i\|_{L^2(0,T; H^3(Q_4))}\right),\notag\\
\|\widetilde{\mathfrak{b}}^5\|_{L^2(C_4\times[0,T])}&\le C\left(\|\mathfrak{b}^5\|_{L^2(C_4\times[0,T])}+\sum_{i=1}^3\|u^i\|_{L^2(0,T; H^3(Q_4^i))}\right),\label{EquationAbschatzungftilgegegenf}\\
\|\widetilde{\mathfrak{b}}^6\|_{L^2(C_4\times[0,T])}&\le C\left(\|\mathfrak{b}^5\|_{L^2(C_4\times[0,T])}+\sum_{i=1}^3\|u^i\|_{L^2(0,T; H^3(Q_4^i))}\right),\notag
\end{align} 
as all terms in $\widetilde{\mathfrak{f}}^i$ except for $\mathfrak{f}^i$ are products of derivatives of $\eta^i$ and derivatives of $u^i$ of order not larger than three and all terms in $\widetilde{\mathfrak{b}}^i$ except for $\mathfrak{b}^i$ are products of derivatives of $\eta$ and derivatives of $u^i$ of order not larger than $2$ and the latter are due to compactness of the trace operator bounded by the $H^3$-norm of $\boldsymbol{u}$.\\
We observe that $\widetilde{\mathfrak{b}}^1\equiv\widetilde{\mathfrak{b}}^2\equiv\widetilde{\mathfrak{b}}^3\equiv\widetilde{\mathfrak{b}}^4\equiv 0$ and so we can derive the same existence results for \eqref{EquationSystemforLoacalSchauderEstimates} as in Proposition \ref{PropostionsExistenceofWeakSolutionsSurfaceDiffsuionTripleJunction}. The only difference is that we include the boundary conditions on $S_4$ directly in the solution space and thus we do not want to repeat the procedure. Also, by construction \begin{align*}
\widetilde{\boldsymbol{u}}=\boldsymbol{\eta u},\quad \widetilde{\boldsymbol{v}}=\boldsymbol{\eta v}-(\Dast\boldsymbol{\eta})\boldsymbol{u}-2\langle\gast\boldsymbol{\eta},\gast\boldsymbol{u}\rangle,
\end{align*}
is a weak and therefore the unique weak solution of \eqref{EquationSystemforLoacalSchauderEstimates}. \\
It remains to show Schauder estimates on suitable subsets. On the latter we want to apply Proposition \ref{PropositionSchauderestimatesforlocalizedSurfaceDiffusionnearTripleJunction}.
As the $\tilde{\mathfrak{f}}^i, \tilde{\mathfrak{b}}^5, \tilde{\mathfrak{b}}^6$ only have $L^2$-regularity, we take sequences $$\left(\widetilde{\mathfrak{f}}^i_n\right)_{n\in\N}\subset C^{\alpha,\alpha/4}(Q^i_4\times[0,T]),\quad \left(\widetilde{\mathfrak{b}}^5_n\right)_{n\in\N}, \left(\widetilde{\mathfrak{b}}^6_n\right)_{n\in\N}\subset C^{1+\alpha,\frac{1+\alpha}{4}}(C_4\times[0,T])$$ fulfilling
\begin{align}\label{EquationPropertiesofsmoothapproximationofinhomogenitiesforSchauderestimatesTripkeJunctioneins}
\|\widetilde{\mathfrak{f}}^i_n-\widetilde{\mathfrak{f}}^i\|_{L^2(Q_4^i\times[0,T])}\to 0,\quad \|\widetilde{\mathfrak{b}}^5_n-\widetilde{\mathfrak{b}}^6\|_{L^2(C_4\times[0,T])}\to 0,\quad \|\widetilde{\mathfrak{b}}^6_n-\widetilde{\mathfrak{b}}^6\|_{L^2(C_4\times[0,T])}\to 0,
\end{align}
and additionally we need on subsets $Q_2^i\subset Q_3^i$ as $n\to\infty$ that
\begin{align}
\|\widetilde{\mathfrak{f}}^i_n\|_{C^{\alpha,\frac{\alpha}{4}}(Q_2^i\times[0,T])}&\le \|\widetilde{\mathfrak{f}}^i\|_{C^{\alpha,\frac{\alpha}{4}}(Q_3^i\times[0,T])}=\|\mathfrak{f}^i\|_{C^{\alpha,\frac{\alpha}{4}}(Q_3^i\times[0,T])},\notag\\
\|\widetilde{\mathfrak{b}}^5_n\|_{C^{1+\alpha,(1+\alpha)/4}(C_2\times[0,T])}&\le \|\widetilde{\mathfrak{b}}^5\|_{C^{1+\alpha, (1+\alpha)/4}(C_3\times[0,T])}=\|\mathfrak{b}^5\|_{C^{1+\alpha, (1+\alpha)/4}(C_3\times[0,T])},\label{EquationPropertiesofsmoothapproximationofinhomogenitiesforSchauderestimatesTripkeJunctionzwei}\\
\|\widetilde{\mathfrak{b}}^6_n\|_{C^{1+\alpha,(1+\alpha)/4}(C_2\times[0,T])}&\le \|\widetilde{\mathfrak{b}}^6\|_{C^{1+\alpha, (1+\alpha)/4}(C_3\times[0,T])}=\|\mathfrak{b}^6\|_{C^{1+\alpha, (1+\alpha)/4}(C_3\times[0,T])}. \notag
\end{align}
One possibility to guarantee condition $(\ref{EquationPropertiesofsmoothapproximationofinhomogenitiesforSchauderestimatesTripkeJunctionzwei})_1$ is to choose one $C^{\alpha,\alpha/2}$-approximation of $\widetilde{\mathfrak{f}}^i_n$ on $Q_4\backslash Q_3$ and take $\mathfrak{f}^i$, which equals $\widetilde{\mathfrak{f}}^i$ on $Q_3$, as $C^{\alpha,\frac{\alpha}{4}}$-approximation on $Q_3$. We then get a suitable approximation on $Q_4$ by connecting both approximations via partitions of unity. The same procedure can be done for $\widetilde{\mathfrak{b}}^5$ and $\widetilde{\mathfrak{b}}^6$.\\ 
Now, we call problem (\ref{EquationSystemforLoacalSchauderEstimates}) with the approximating inhomogeneities $\widetilde{\mathfrak{f}}^i_n, \widetilde{\mathfrak{b}}^5_n$ and $\widetilde{\mathfrak{b}}^6_n$ problem $(\ref{EquationSystemforLoacalSchauderEstimates})_n$. The parametrization of this problem has a unique solution $\widetilde{\boldsymbol{u}}_n\in C^{4+\alpha, 1+\frac{\alpha}{4}}_{TJ}(Q_4\times[0,T])$ due to Proposition \ref{PropositionSchauderestimatesforlocalizedSurfaceDiffusionnearTripleJunction}. On the other hand, $(\boldsymbol{\widetilde{u}}_n, \widetilde{\boldsymbol{v}}_n)$ with $\widetilde{v}_n^i=-\Dasti\widetilde{u}^i_n+C_v\widetilde{u}^i_n$ is also a weak solution of $(\ref{EquationSystemforLoacalSchauderEstimates})_n$ and so using the estimate (\ref{EquationEnhacedRegularityforweaksolutionTripleJunction}) we get
\begin{align*}
\sum_{i=1}^3\|\widetilde{u}_n^i\|_{L^2(0,T; H^{3}(Q_4^i))}&\le C \left(\sum_{i=1}^3\|\widetilde{\mathfrak{f}}^i_n\|_{L^2(0,T; L^2(Q_4^i))}+\sum_{i=5}^6\|\widetilde{\mathfrak{b}}_n^i\|_{L^2(0,T; L^2(C_4))}\right)\\
&\le C\left(\sum_{i=1}^3\|\widetilde{\mathfrak{f}}^i\|_{L^2(Q_4^i\times[0,T])}+\sum_{i=5}^6\|\widetilde{\mathfrak{b}}^i\|_{L^2(C_4\times[0,T])} \right)\\
&\le C\left(\sum_{i=1}^3\|\mathfrak{f}^i\|_{L^2(Q_4^i\times[0,T])}+\sum_{i=5}^6\|\mathfrak{b}^i\|_{L^2(C_4\times[0,T])}+\sum_{i=1}^3\|u^i\|_{L^2(0,T; H^3(Q_4^i))}\right)\\
&\le C\left(\sum_{i=1}^3\|\mathfrak{f}^i\|_{L^2(Q_4\times[0,T])}+\sum_{i=5}^6\|\mathfrak{b}^i\|_{L^2(C_4\times[0,T])}\right).
\end{align*}
In the third inequality we used (\ref{EquationAbschatzungftilgegegenf}) and  (\ref{EquationEnhacedRegularityforweaksolutionTripleJunction}) in the last inequality. This yields now that both $\boldsymbol{u}_n$ and $\boldsymbol{v}_n$ are bounded in $L^2(0,T; H^1_{TJ}(Q_4))$ and so we can find subsequences $(\boldsymbol{\widetilde{u}}_{n_l})_{n_l}$ and $(\widetilde{\boldsymbol{v}}_{n_l})_{n_l}$ converging weakly to $\bar{\boldsymbol{u}}, \bar{\boldsymbol{v}}\in L^2(0,T; H^1_{TJ}(Q_4))$, which forms a weak solution of (\ref{EquationSystemforLoacalSchauderEstimates}). Due to uniqueness of the weak solution it follows
\begin{align*}
\bar{\boldsymbol{u}}=\widetilde{\boldsymbol{u}}& &\text{in }Q_4\times[0,T].
\end{align*} 
Now we need bounds for the H\"older-norms to get weak convergence in that space, too. For this, we use the local estimates from \cite[Theorem 4.11]{ladyzhenskaia1968linear} for $Q_1\subset Q_2$ to derive
\begin{align}
\sum_{i=1}^3\|\widetilde{u}^i_n\|_{C^{4+\alpha, 1+\frac{\alpha}{4}}(Q_1^i\times[0,T])}&\le C\left(\sum_{i=1}^3\|\widetilde{\mathfrak{f}}^i_n\|_{C^{\alpha,\frac{\alpha}{4}}(Q_2^i\times [0,T])}+\sum_{i=5}^6\|\widetilde{\mathfrak{b}}_n^i\|_{C^{1+\alpha, (1+\alpha)/4}(C_2\times[0,T])}\right)\notag\\
&\phantom{\le}+C\sum_{i=1}^3\|\widetilde{u}_n^i\|_{L^2(Q_2^i\times[0,T])}\notag\\
&\overset{(\ref{EquationPropertiesofsmoothapproximationofinhomogenitiesforSchauderestimatesTripkeJunctionzwei})}{\le} C\left(\sum_{i=1}^3\|\widetilde{\mathfrak{f}}^i\|_{C^{\alpha,\frac{\alpha}{4}}(Q_3^i\times [0,T])}+\sum_{i=5}^6\|\widetilde{\mathfrak{b}}^i\|_{C^{1+\alpha, (1+\alpha)/4}(C_3\times[0,T])}\right)\notag\\
&\phantom{=}+C\sum_{i=1}^3\|\widetilde{u}_n^i\|_{L^2(Q_4^i\times[0,T])}\label{EquationHolderEstimatesforLimit}\\
&\le C\left(\sum_{i=1}^3\|\mathfrak{f}^i\|_{C^{\alpha,\frac{\alpha}{4}}(Q_3^i\times [0,T])}+\sum_{i=5}^6\|\mathfrak{b}^i\|_{C^{1+\alpha, (1+\alpha)/4}(C_3\times[0,T])}\right)\notag\\
&\phantom{=}+C\left(\sum_{i=1}^3\|\mathfrak{f}^i\|_{L^2(Q_4^i\times[0,T])}+\sum_{i=1}^5\|\mathfrak{b}^i\|_{L^2(C_4\times[0,T])}\right)\notag\\
&\le C\left(\sum_{i=1}^3\|\mathfrak{f}^i\|_{C^{\alpha,\frac{\alpha}{4}}(Q_4^i\times[0,T])}+\sum_{i=5}^6\|\mathfrak{b}^i\|_{C^{1+\alpha, (1+\alpha)/4}(C_4\times[0,T])}\right)\notag.
\end{align}
But this implies now that the subsequence $\widetilde{\boldsymbol{u}}_{n_l}$ constructed above is bounded in $C^{4,1}(Q_1\times[0,T])$ and as the H\"older-norms are bounded, it is as well equicontinuous. Thus, the theorem of Arzela-Ascoli applied on every derivative gives us the existence of a subsequence - we again call $\widetilde{\boldsymbol{u}}_{n_l}$ - that converges to some $\widehat{\boldsymbol{u}}\in C^{4,1}_{TJ}(Q_1\times[0,T])$. Indeed, we also have $\widehat{\boldsymbol{u}}\in C^{4+\alpha, 1+\alpha}_{TJ}(Q_1\times[0,T])$ as for any covariant derivative $\nabla^k$ of up to order $4$ in space we have
\begin{align}
|\nabla^k\widehat{u}^i(x,t)-\nabla^k\widehat{u}^i(y,t)|=\underset{n_l\to\infty}{\lim}|\nabla^k\widetilde{u}^i_{n_l}(x,t)-\nabla^k\widetilde{u}^i_{n_l}(y,t)|\le C|x-y|^{\alpha},
\end{align}
for any points $(x,t), (y,t)\in Q_4\times[0,T]$ and we also have
\begin{align}
|\partial_t \widehat{u}^i(x,t_1)-\partial_t\widehat{u}^i(x,t_2)|=\underset{n_l\to\infty}{\lim}|\widetilde{u}^i_{n_l}(x,t_1)-\widetilde{u}^i_{n_l}(x,t_2)|\le C|t_1-t_2|^{\frac{\alpha}{4}}, 
\end{align}
for any points $(x,t_1),(x,t_2)\in Q_4\times[0,T]$. The same argument can used for the H\"older regularity in time of the partial derivatives in space. Also, observe that we have for $\widehat{u}$ the estimate \eqref{EquationHolderEstimatesforLimit}. Uniqueness of limits now implies
\begin{align}
\boldsymbol{\widehat{u}}=\widetilde{\boldsymbol{u}}& &\text{on }Q_1\times [0,T].
\end{align}
As we have $\widehat{u}=u$ on $Q_1\times[0,T]$ this implies now the desired Schauder-estimate for $\boldsymbol{u}$, namely
\begin{align}
\|\boldsymbol{u}\|_{C^{4+\alpha, 1+\frac{\alpha}{4}}_{TJ}(Q_1\times[0,T])}\le C\left(\sum_{i=1}^3\|\mathfrak{f}^i\|_{C^{\alpha,\frac{\alpha}{4}}(Q_1\times[0,T])}+\sum_{i=5}^6\|\mathfrak{b}^i\|_{C^{1+\alpha, (1+\alpha)/4}(Q_1\times[0,T])}\right),
\end{align}
holds. With this done we sum up our results for \eqref{EquationReducedLinearProblem} in the following Proposition.
\begin{proposition}[Existence theory for \eqref{EquationReducedLinearProblem}]\label{PropositionExistenceTheoryReducedLinearProbelm}\ \\
	Problem $\eqref{EquationReducedLinearProblem}$ has for all $T>0$ and all inhomogeneities
	\begin{align}
	\mathfrak{f}\in C^{\alpha,\frac{\alpha}{4}}_{TJ}(\GammaastT),\quad \mathfrak{b}^5,\mathfrak{b}^6\in C^{1+\alpha,\frac{1+\alpha}{4}}(\Sigmaast\times[0,T]),
	\end{align}
	which fulfill the compatibility conditions
	\begin{align}
	(\gamma^1\mathfrak{f}^1+\gamma^2\mathfrak{f}^3+\gamma^3\mathfrak{f}^3)\big|_{t=0}=0\text{ on }\Sigmaast,\quad\quad  \mathfrak{b}^i\big|_{t=0}=0\text{ on }\Sigma_{\ast}, i=5,6,
	\end{align}
	a unique solution in $C^{4+\alpha,\frac{1+\alpha}{4}}_{TJ}(\GammaastT)$ and we have the energy estimate 
	\begin{align}
	\|\boldsymbol{u}\|_{C^{4+\alpha, 1+\frac{\alpha}{4}}_{TJ}(\Gamma_{\ast,T})}\le C\left(\|\mathfrak{f}\|_{C^{\alpha,\frac{\alpha}{4}}_{TJ}(\Gamma_{\ast,T})}+\sum_{i=5}^6\|\mathfrak{b}^i\|_{C^{1+\alpha, \frac{1+\alpha}{4}}(\Sigma_{\ast,T})}\right)
	\end{align}
\end{proposition}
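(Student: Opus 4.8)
The plan is to globalize the local Schauder estimate established in the discussion preceding the proposition. I would start from the weak solution $(\bu,\bv)$ of $(LSDFTJ)_P$ provided by Proposition \ref{PropostionsExistenceofWeakSolutionsSurfaceDiffsuionTripleJunction}; the hypotheses of the present proposition are stronger than the $L^2$-in-time assumptions needed there, and the stated compatibility conditions are precisely the $t=0$ traces that make the localized problems compatible in the sense of Proposition \ref{PropositionSchauderestimatesforlocalizedSurfaceDiffusionnearTripleJunction}. By Corollary \ref{CorollaryH3regularityWeakSolutionTJ} this $\bu$ additionally lies in $L^2(0,T;H^3_{TJ}(\Gammaast))$, which is the regularity feeding the localization machinery, since it is what makes the tilde-inhomogeneities $\widetilde{\mathfrak{f}}^i,\widetilde{\mathfrak{b}}^5,\widetilde{\mathfrak{b}}^6$ lie in $L^2$.

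Next I would cover $\Gammaast$ by finitely many coordinate patches of two types. Around each $\sigma\in\Sigmaast$ I take a neighbourhood $Q_1=Q_1(\sigma)$ as constructed above; on the associated chain $Q_1\subset Q_2\subset Q_3\subset Q_4$ the cut-off $\widetilde{\bu}=\boldsymbol{\eta u}$ solves \eqref{EquationSystemforLoacalSchauderEstimates} weakly, and the approximation argument together with Proposition \ref{PropositionSchauderestimatesforlocalizedSurfaceDiffusionnearTripleJunction} and Arzela-Ascoli upgrades $\bu$ to $C^{4+\alpha,1+\frac{\alpha}{4}}_{TJ}(Q_1\times[0,T])$, yielding the local estimate displayed just before the proposition: $\|\bu\|_{C^{4+\alpha,1+\frac{\alpha}{4}}_{TJ}(Q_1\times[0,T])}$ is controlled by $\|\mathfrak{f}\|_{C^{\alpha,\frac{\alpha}{4}}}$ and $\|\mathfrak{b}^5\|,\|\mathfrak{b}^6\|_{C^{1+\alpha,\frac{1+\alpha}{4}}}$ on the larger set $Q_4$, the lower-order $L^2$-terms having already been absorbed. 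Away from $\Sigmaast$ I take interior patches contained in a single $\Gamma^i_\ast$; there the system decouples into a single fourth-order parabolic equation for $u^i$, which after a cut-off has homogeneous boundary data, so the corresponding interior Schauder estimate is classical, cf. \cite[Proposition 3.8]{goesswein2019Dissertation}. Summing the finitely many local bounds and using that the norm on $C^{4+\alpha,1+\frac{\alpha}{4}}_{TJ}(\GammaastT)$ is equivalent to the sum of the chart norms, I conclude $\bu\in C^{4+\alpha,1+\frac{\alpha}{4}}_{TJ}(\GammaastT)$ with the asserted energy estimate. Uniqueness is then immediate: any $C^{4+\alpha,1+\frac{\alpha}{4}}$-solution of \eqref{EquationReducedLinearProblem}, together with $v^i:=-\Dast u^i+C_v u^i$, is a weak solution of $(LSDFTJ)_P$ (reverse the testing procedure, cf. \cite[Lemma 4.11]{goesswein2019Dissertation}), so two solutions have the same weak solution and hence agree by the uniqueness part of Proposition \ref{PropostionsExistenceofWeakSolutionsSurfaceDiffsuionTripleJunction}.

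I expect the only genuinely delicate point to be the bookkeeping of the covering and the cut-offs, rather than any new analytic estimate. One must choose the nested neighbourhoods and the functions $\eta^i$ so that (i) the collection of the $Q_1$'s together with the interior patches still covers $\Gammaast$; (ii) on $Q_3$ the tilde-inhomogeneities coincide literally with the original data, so that their H\"older norms are controlled by $\|\mathfrak{f}\|_{C^{\alpha,\frac{\alpha}{4}}}$ and $\|\mathfrak{b}^i\|_{C^{1+\alpha,\frac{1+\alpha}{4}}}$; and (iii) the normal derivatives $\partial_{\nu^i_\ast}\eta^i$ vanish along $\Sigmaast$, which is exactly what the distance-function extension \eqref{EquationDefinitionCutofffunctionTriplejunction} and property \eqref{EquationNormalderivativeofcutofffunction} guarantee, and which is needed so that cutting off does not create inhomogeneities in the first four (lower-order) boundary conditions. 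Once these choices are fixed consistently, the proof is a finite sum of estimates already proven, and nothing further is required.
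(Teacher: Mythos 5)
Your proposal follows essentially the same route as the paper: weak existence and $H^3$-regularity for the split system $(LSDFTJ)_P$, the cut-off chain $Q_1\subset\dots\subset Q_4$ with the distance-function cut-off satisfying \eqref{EquationNormalderivativeofcutofffunction}, approximation of the $L^2$ tilde-inhomogeneities, the localized Schauder estimates of Proposition \ref{PropositionSchauderestimatesforlocalizedSurfaceDiffusionnearTripleJunction} plus Arzel\`a--Ascoli and identification with the weak solution, interior patches handled by the decoupled equation, and uniqueness by reducing a classical solution to a weak one. This is correct and matches the paper's argument (including the shared, implicitly handled discrepancy by the lower-order $C_u,C_v$ terms, which are absorbed later in the perturbation step), so nothing further is needed.
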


\subsection{The Complete Linear Problem}\label{SubsectionCompleteLinearProblem}
\label{SubsectionTheCompleteLinearProblem}
The system we studied so far differs in three aspects from the original problem \eqref{EquationLinearisedSurfaceDiffusionTripleJunctionAbstractFormulaton}. Firstly, we miss inhomogeneities in the first four boundary conditions. Secondly, both in the parabolic equation itself and the boundary conditions lower order terms are missing. And finally, we have to include general initial data. These final steps are carried out in the next three corollaries.

\begin{corollary}[Inhomogeneities in \eqref{EquationReducedLinearProblem}]\label{CorollaryLinearisedAnalysisFullInhomo}\ \\
	Let $\sigma_i$ be as in \eqref{EquationDefinitionofSigma}. For any 
	\begin{align} 
	\mathfrak{b}^i&\in C^{4-\sigma_i+\alpha, \frac{4-\sigma_i+\alpha}{4}}(\Sigma_{\ast,T}), & &i=1,...,6,
	\end{align}
	fulfilling the compatibility conditions
	\begin{align}
	(\gamma^1\mathfrak{f}^1+\gamma^2\mathfrak{f}^3+\gamma^3\mathfrak{f}^3)\big|_{t=0}=0\text{ on }\Sigmaast\quad  \mathfrak{b}^i\big|_{t=0}=0\text{ on }\Sigma_{\ast}, i=1,...,6,
	\end{align}
	 the system \eqref{EquationReducedLinearProblem}	has a unique solution $\boldsymbol{u}\in C^{4+\alpha, 1+\frac{\alpha}{4}}_{TJ}(\Gamma_{\ast}\times[0,T])$ and we have the energy estimate 
	\begin{align}\label{EquationEnergieabschatzungLinearisiertesProblemmitallenInhomogenitaten}
	\|\boldsymbol{u}\|_{C^{4+\alpha, 1+\frac{\alpha}{4}}_{TJ}(\Gamma_{\ast,T})}\le C\left(\sum_{i=1}^3\|\mathfrak{f}^i\|_{C^{\alpha, \frac{\alpha}{4}}(\Gamma_{\ast, T}^i)}+\sum_{i=1}^6\|\mathfrak{b}^i\|_{C^{4-\sigma_i+\alpha, \frac{4-\sigma_i+\alpha}{4}}(\Sigma_{\ast,T})}\right).
	\end{align}

\end{corollary}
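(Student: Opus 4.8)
The plan is to reduce the statement to Proposition~\ref{PropositionExistenceTheoryReducedLinearProbelm} by subtracting off an explicit reference function that already carries the four lowest-order boundary inhomogeneities. Concretely, the first step is to construct $\bar{\boldsymbol u}\in C^{4+\alpha,1+\frac\alpha4}_{TJ}(\GammaastT)$ with $\bar{\boldsymbol u}\big|_{t=0}=0$, with $\mathcal{B}^i_{HOT}\bar{\boldsymbol u}=\mathfrak b^i$ on $\SigmaastT$ for $i=1,2,3,4$, and with $\|\bar{\boldsymbol u}\|_{C^{4+\alpha,1+\frac\alpha4}_{TJ}(\GammaastT)}\le C\sum_{i=1}^4\|\mathfrak b^i\|_{C^{4-\sigma_i+\alpha,\frac{4-\sigma_i+\alpha}{4}}(\SigmaastT)}$. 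Since $\mathcal{B}^1_{HOT}$ sees only the traces $u^i\big|_{\Sigmaast}$, the operators $\mathcal{B}^2_{HOT},\mathcal{B}^3_{HOT}$ only the conormal derivatives $\partialnuasta{i}u^i\big|_{\Sigmaast}$, and $\mathcal{B}^4_{HOT}$ only the Laplacians $\Dast u^i\big|_{\Sigmaast}$, prescribing $\mathcal{B}^i_{HOT}\bar{\boldsymbol u}=\mathfrak b^i$ amounts to three decoupled, underdetermined linear systems for the prospective normal Cauchy data $(g_0^i,g_1^i,g_2^i):=(\bar u^i,\partialnuasta{i}\bar u^i,\Dast\bar u^i)\big|_{\Sigmaast}$ of the three sheets. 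I would simply concentrate everything on the first sheet, e.g.
\[
g_0^1:=\tfrac{1}{\gamma^1}\mathfrak b^1,\qquad g_1^1:=\mathfrak b^2,\qquad g_1^3:=-\mathfrak b^3,\qquad g_2^1:=\tfrac{1}{\gamma^1}\mathfrak b^4,
\]
and all remaining $g_j^i:=0$; by the assumption $\mathfrak b^i\big|_{t=0}=0$ these data vanish at $t=0$, and they have parabolic Hölder regularity $C^{4+\alpha,\frac{4+\alpha}{4}}$, $C^{3+\alpha,\frac{3+\alpha}{4}}$ and $C^{2+\alpha,\frac{2+\alpha}{4}}$, respectively.

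The remaining task is to lift each admissible triple $(g_0^i,g_1^i,g_2^i)$ to a function $\bar u^i$ of class $C^{4+\alpha,1+\frac\alpha4}$ on $\Gammaast^i\times[0,T]$ with vanishing initial value and with the prescribed traces of $\bar u^i$, $\partialnuasta{i}\bar u^i$ and $\Dast\bar u^i$ on $\Sigmaast$. I would carry this out in the collar coordinates $(x,d)$ of \eqref{EquationParmaetrizationnearboundary} around $\Sigmaast$: expressing $\Dast$ in such a chart and using that $\partialnuasta{i}$ equals $-\partial_d$ on $\Sigmaast$, the data $(g_0^i,g_1^i,g_2^i)$ translate into prescribed values of $G^i,\partial_dG^i,\partial_d^2G^i$ at $d=0$, and by the trace/extension theory for anisotropic parabolic Hölder spaces — equivalently, a one-sided extension in the $d$-variable that gains one spatial derivative away from $\{d=0\}$, as provided by the parabolic Schauder theory of \cite{ladyzhenskaia1968linear} — one obtains $G^i\in C^{4+\alpha,1+\frac\alpha4}$ with these Cauchy data and $G^i\big|_{t=0}=0$; multiplying by a fixed cut-off in $d$ supported in the collar and transporting back to $\Gammaast^i$ produces $\bar u^i$. (Equally well one may take $\bar u^i$ to be the unique $C^{4+\alpha,1+\frac\alpha4}$-solution of the auxiliary problem $\partial_t\bar u^i=-\Dast\Dast\bar u^i$ on $\Gammaast^i$ with the Lopatinskii–Shapiro boundary conditions $\bar u^i=g_0^i$, $\partialnuasta{i}\bar u^i=g_1^i$, $\Dast\bar u^i=g_2^i$, $\partialnuasta{i}\Dast\bar u^i=0$ on $\Sigmaast$ and $\bar u^i\big|_{t=0}=0$, furnished by \cite[Theorem~4.9]{ladyzhenskaia1968linear}, the corner compatibility at $\Sigmaast\times\{0\}$ being trivial since all data and the initial value vanish there.)

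With $\bar{\boldsymbol u}$ in hand one sets $\boldsymbol u=\bar{\boldsymbol u}+\boldsymbol u'$. Putting $\mathfrak g^i:=-(\partial_t\bar u^i+\Dast\Dast\bar u^i)\in C^{\alpha,\frac\alpha4}(\Gammaast^i\times[0,T])$, the function $\boldsymbol u'$ must solve \eqref{EquationReducedLinearProblem} with forcing $\mathfrak f^i+\mathfrak g^i$, with the first four boundary data identically zero, with $\mathfrak b^5,\mathfrak b^6$ replaced by $\mathfrak b^i-\mathcal{B}^i_{HOT}\bar{\boldsymbol u}\in C^{1+\alpha,\frac{1+\alpha}{4}}(\SigmaastT)$ (note that $\mathcal{B}^5_{HOT},\mathcal{B}^6_{HOT}$ are of third order, so applied to $\bar{\boldsymbol u}\in C^{4+\alpha,1+\frac\alpha4}$ they land in exactly that space), and with zero initial value. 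Because all spatial derivatives of $\bar{\boldsymbol u}$ vanish at $t=0$, the modified lower-order data again vanish at $t=0$, and one verifies directly, using $\bar{\boldsymbol u}\big|_{t=0}=0$ together with $\gamma^1\bar u^1+\gamma^2\bar u^2+\gamma^3\bar u^3=\mathfrak b^1$ on $\Sigmaast$, that the modified forcing and the modified data $\mathfrak b^5,\mathfrak b^6$ satisfy the compatibility conditions required by Proposition~\ref{PropositionExistenceTheoryReducedLinearProbelm}. That proposition then yields a unique $\boldsymbol u'\in C^{4+\alpha,1+\frac\alpha4}_{TJ}(\GammaastT)$ together with the corresponding estimate for $\boldsymbol u'$; combining it with the bound on $\bar{\boldsymbol u}$ gives \eqref{EquationEnergieabschatzungLinearisiertesProblemmitallenInhomogenitaten}. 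Uniqueness of $\boldsymbol u$ is immediate: the difference of two solutions solves the fully homogeneous problem, which is covered by the uniqueness part of Proposition~\ref{PropositionExistenceTheoryReducedLinearProbelm}.

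\emph{Main obstacle.} The one genuinely delicate point is the construction of $\bar{\boldsymbol u}$: it must simultaneously match boundary data of the three different — and successively lower — parabolic Hölder regularities $C^{4+\alpha}$, $C^{3+\alpha}$, $C^{2+\alpha}$ by a single function of the top regularity $C^{4+\alpha,1+\frac\alpha4}$. A naive ansatz such as multiplying the data by powers of the distance function loses regularity and fails; what is really needed is the anisotropic trace/extension theory for parabolic Hölder spaces or, equivalently, the solvability of the auxiliary fourth-order parabolic problem above. Everything else — distributing $\mathfrak b^1,\dots,\mathfrak b^4$ over the three sheets and bookkeeping the compatibility conditions — is routine.
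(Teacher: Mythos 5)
Your overall strategy -- build a shift $\bar{\boldsymbol u}$ with zero initial value whose highest-order boundary traces reproduce $\mathfrak b^1,\dots,\mathfrak b^4$, subtract it, and feed the remainder into Proposition \ref{PropositionExistenceTheoryReducedLinearProbelm} -- is the same reduction the paper performs, so everything hinges on how the shift is produced, and that is precisely where your argument has a gap. Your primary route invokes an anisotropic trace/extension theorem (prescribe $G,\partial_dG,\partial_d^2G$ at $d=0$ with regularities $4+\alpha$, $3+\alpha$, $2+\alpha$ in the parabolic scale and produce $G\in C^{4+\alpha,1+\frac{\alpha}{4}}$ vanishing at $t=0$) without proof or a precise citation, and the concrete fallback you offer cannot substitute for it: a single fourth-order parabolic equation on $\Gammaast^i$ carries exactly two boundary conditions in the complementing/Lopatinskii--Shapiro framework of \cite{ladyzhenskaia1968linear} (the triple junction system has six conditions for three fourth-order equations), so the auxiliary problem with the four conditions $\bar u^i=g_0^i$, $\partialnuasta{i}\bar u^i=g_1^i$, $\Dast\bar u^i=g_2^i$, $\partialnuasta{i}\Dast\bar u^i=0$ is overdetermined and \cite[Theorem 4.9]{ladyzhenskaia1968linear} does not apply to it. The paper realizes the lift differently, one inhomogeneity at a time: it solves an auxiliary fourth-order parabolic problem on a single sheet with only \emph{two} boundary conditions carrying the datum (for $\mathfrak b^4$, conditions on $\bar b$ and $\Dast\bar b$ on $\Sigmaast$), which upgrades the $C^{2+\alpha,\frac{2+\alpha}{4}}$ boundary datum to an interior function $\bar b\in C^{4+\alpha,1+\frac{\alpha}{4}}$, and then multiplies by $\tfrac12\dist_{\Sigmaast}^2\,\eta(\dist_{\Sigmaast})$; since this weight is $C^{5+\alpha}$ and time-independent, the product keeps full regularity while the quadratic vanishing kills the zeroth- and first-order traces. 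This is exactly the device that circumvents the loss of regularity you correctly identify as the obstruction to the naive polynomial ansatz; without it (or a proved extension lemma) the central step of your proof is not established.

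A second point: your claim that the corrected forcing satisfies the compatibility condition of Proposition \ref{PropositionExistenceTheoryReducedLinearProbelm} does not check out as written. From $\gamma^1\bar u^1+\gamma^2\bar u^2+\gamma^3\bar u^3=\mathfrak b^1$ on $\SigmaastT$ and $\bar{\boldsymbol u}(0)=0$ one gets $\sum_i\gamma^i\partial_t\bar u^i\big|_{t=0}=\partial_t\mathfrak b^1\big|_{t=0}$ on $\Sigmaast$, hence $\sum_i\gamma^i(\mathfrak f^i+\mathfrak g^i)\big|_{t=0}=-\partial_t\mathfrak b^1\big|_{t=0}$, which the stated hypotheses ($\mathfrak b^1|_{t=0}=0$ only) do not force to vanish. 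In fact $\partial_t\mathfrak b^1|_{t=0}=\sum_i\gamma^i\mathfrak f^i|_{t=0}$ is a necessary condition for solvability with zero initial data (differentiate the first boundary condition in time at $t=0$ and use the bulk equation), so this reflects an imprecision of the corollary's statement as much as of your argument -- and it is invisible in the paper's application, where $\mathfrak b^1\equiv0$ -- but "one verifies directly" is not accurate: you should either add $\partial_t\mathfrak b^1|_{t=0}=0$ (equivalently, state the compatibility condition as $\sum_i\gamma^i\mathfrak f^i|_{t=0}=\partial_t\mathfrak b^1|_{t=0}$) or restrict to $\mathfrak b^1\equiv0$.
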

\begin{proof}
	We want to include the missing inhomogeneities one by one, beginning with $\mathfrak{b}^4$. For this we do a shifting procedure where we have to guarantee that the shift will not results in lower order inhomogeneities. Hereby, we follow ideas of \cite[Appendix B]{garckeitokohsaka2008nonlinear}. For any $\mathfrak{b}^4\in C^{2+\alpha, (2+\alpha)/4}(\Sigma_{\ast,T})$ the system 
	\begin{align*}
	\partial_t \bar{b}+\Dasta{2}\Dasta{2}\bar{b}+\bar{b}&=0 & &\text{on }\Gastzwei\times[0,T],\\
	\bar{b}&=0 & &\text{on }\Sigma_{\ast}\times[0,T], \\
	-\Dasta{2}\bar{b}&=\mathfrak{b}^4 & &\text{on }\Sigma_{\ast}\times[0,T],\\
	\bar{b}(x,0)&=0 & & \text{on }\Gamma_{\ast}^2,
	\end{align*}
	has a unique solution $\bar{b}\in C^{4+\alpha, 1+\frac{\alpha}{4}}(\Gamma_{\ast}^2)$ fulfilling the energy estimate
	\begin{align}\label{EquationEnergieabschatzunoverlineb}
	\|\bar{b}\|_{C^{4+\alpha, 1+\frac{\alpha}{4}}(\Gamma_{\ast}^2)}\le C\|\mathfrak{b}^4\|_{C^{3+\alpha, (3+\alpha)/4}(\Sigma_{\ast})}.
	\end{align}
	This can be proven with the same strategy as in Section \ref{SubsectionWeakTheory} and \ref{SubsectionSchauderEstimates}, except that we do not have to split the parabolic equation and so we can include the inhomogeneity $\mathfrak{b}^4$. Now, we define the auxiliary function $\chi^2: \Gamma_{\ast}^2\to\mb{R}$ by
	\begin{align*}
	\chi^2(x)=\frac{1}{2}(\dist_{\Sigma_{\ast}}(x))^2\eta(\dist_{\Sigma_{\ast}}(x))\bar{b}(x),
	\end{align*}
	where $\eta: [0,\infty]\to [0,1]$ is again a suitable cut-off function with $\eta\equiv 1$ on $[0,\varepsilon]$ for some sufficiently small $\varepsilon$. Note that $\dist_{\Sigma_{\ast}}^2(\cdot)\eta\in C^{5+\alpha, \infty}(\Gamma_{\ast,T}^2)$ and so (\ref{EquationEnergieabschatzunoverlineb}) holds also for $\chi^2$. Define finally $\boldsymbol{\chi}$ by setting $\chi^1\equiv 0$ and $\chi^3\equiv 0$ on $\Gamma_{\ast}^1$ resp. $\Gamma_{\ast}^3$. Considering the solution $\bu$ of \eqref{EquationReducedLinearProblem} constructed in Proposition \ref{PropositionExistenceTheoryReducedLinearProbelm} with 
	\begin{align*}
	\widetilde{\mathfrak{f}}^1&=\mathfrak{f}^1-\partial_t\chi-\Dasta{1}\Dasta{1}\chi+(C_u+C_v)\Dasta{1}\chi+C_uC_v\chi,\\
	\widetilde{\mathfrak{b}}^5&=\mathfrak{b}^5+\partial_{\nu_{\ast}^1}(-\Dasta{1}\chi),\\
	\widetilde{\mathfrak{b}}^6&=\mathfrak{b}^6,
	\end{align*}
	the function $\boldsymbol{u}+\chi$ is the wished solution of (\ref{EquationLinearisedSDFTJWeakPart}) with included inhomogeneity $\mathfrak{b}^4$ and (\ref{EquationEnergieabschatzunoverlineb}) implies together with the already known energy estimates the estimate (\ref{EquationEnergieabschatzungLinearisiertesProblemmitallenInhomogenitaten}). 
	To include the inhomogeneities $\mathfrak{b}^1, \mathfrak{b}^2$ and $\mathfrak{b}^3$ we can argue analogously.
\end{proof}
\begin{corollary}[Lower order terms for \eqref{EquationReducedLinearProblem}] \label{CorollaryLOTforLinearizedProbelm}\ \\
	Consider \eqref{EquationLinearisedSurfaceDiffusionTripleJunctionAbstractFormulaton} with $\bu_0\equiv 0$. Then there is a $T_{\ast}>0$ such that for all $T<T_{\ast}$ the result of Corollary \ref{CorollaryLinearisedAnalysisFullInhomo} also holds for this problem.
\end{corollary}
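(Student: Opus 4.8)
The plan is to treat the lower order contributions $\mathcal{A}^i_{LOT}$ and $\mathcal{B}_{LOT}$ of \eqref{EquationLinearisedSurfaceDiffusionTripleJunctionAbstractFormulaton} as a perturbation of the principal problem of Corollary \ref{CorollaryLinearisedAnalysisFullInhomo} and to solve the full system (with $\boldsymbol{u}_0\equiv 0$) by a Banach fixed point argument on a short time interval. Set
\[
X_T:=\bigl\{\boldsymbol{w}\in C^{4+\alpha,1+\frac{\alpha}{4}}_{TJ}(\Gamma_{\ast,T})\ :\ \boldsymbol{w}\big|_{t=0}=0\bigr\},
\]
which is a closed subspace of a Banach space and hence complete. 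Given $\boldsymbol{w}\in X_T$, let $S\boldsymbol{w}$ be the solution provided by Corollary \ref{CorollaryLinearisedAnalysisFullInhomo} of the principal problem with shifted data $\widetilde{\mathfrak{f}}^i:=\mathfrak{f}^i-\mathcal{A}^i_{LOT}(w^i,\boldsymbol{w}\big|_{\Sigma_{\ast}})$ and $\widetilde{\mathfrak{b}}^i:=\mathfrak{b}^i-\mathcal{B}^i_{LOT}\boldsymbol{w}$, $i=1,\dots,6$. A fixed point $\boldsymbol{u}=S\boldsymbol{u}$ is exactly a solution of \eqref{EquationLinearisedSurfaceDiffusionTripleJunctionAbstractFormulaton} with $\boldsymbol{u}_0\equiv0$.

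First I would verify that $S$ is well defined. Since $\mathcal{A}^i_{LOT}$ and $\mathcal{B}^i_{LOT}$ only involve spatial derivatives of $\boldsymbol{w}$ (and, via $\bar{\mathcal{A}}^i_{LOT}$, traces of such derivatives on $\Sigma_{\ast}$ composed with the smooth projection $\pr^i_{\Sigma}$), the condition $\boldsymbol{w}\big|_{t=0}=0$ forces $\mathcal{A}^i_{LOT}(\boldsymbol{w})\big|_{t=0}=0$ and $\mathcal{B}^i_{LOT}\boldsymbol{w}\big|_{t=0}=0$; thus $\widetilde{\mathfrak{f}}^i\big|_{t=0}=\mathfrak{f}^i\big|_{t=0}$ and $\widetilde{\mathfrak{b}}^i\big|_{t=0}=\mathfrak{b}^i\big|_{t=0}$, so the compatibility conditions required by Corollary \ref{CorollaryLinearisedAnalysisFullInhomo} for the shifted problem reduce precisely to the hypotheses on $\mathfrak{f},\mathfrak{b}$ (which, for $\boldsymbol{u}_0\equiv0$, are $(\gamma^1\mathfrak{f}^1+\gamma^2\mathfrak{f}^2+\gamma^3\mathfrak{f}^3)\big|_{t=0}=0$ and $\mathfrak{b}^i\big|_{t=0}=0$). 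One also has to check that $\widetilde{\mathfrak{f}}^i\in C^{\alpha,\frac{\alpha}{4}}_{TJ}(\Gamma_{\ast,T})$ and $\widetilde{\mathfrak{b}}^i\in C^{4-\sigma_i+\alpha,\frac{4-\sigma_i+\alpha}{4}}(\Sigma_{\ast,T})$: here the order count enters, namely that every summand of $\widetilde{\mathcal{A}}^i_{LOT}$ and $\bar{\mathcal{A}}^i_{LOT}$ carries at most three spatial derivatives of $\boldsymbol{w}$ and every summand of $\mathcal{B}^i_{LOT}$ at most $\sigma_i-1$, multiplied by coefficient functions of the H\"older regularity recorded by the symbols $a^{k+s}$ (available because $\Gamma_{\ast}\in C^{5+\alpha}$); Remark \ref{RemarkTracesParabolicHspaces} and the product estimate Lemma \ref{LemmaProductEstimatesParabolichHspace} then place the shifted data in the required classes. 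Consequently $S\colon X_T\to X_T$, the vanishing of the initial trace of $S\boldsymbol{w}$ being automatic from Corollary \ref{CorollaryLinearisedAnalysisFullInhomo}.

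The core of the argument is the contraction estimate. As $\mathcal{A}_{LOT}$ and $\mathcal{B}_{LOT}$ are linear, $S\boldsymbol{w}_1-S\boldsymbol{w}_2$ solves the principal problem with data $-\mathcal{A}^i_{LOT}(\boldsymbol{w}_1-\boldsymbol{w}_2)$, $-\mathcal{B}^i_{LOT}(\boldsymbol{w}_1-\boldsymbol{w}_2)$ and zero initial value, so the energy estimate \eqref{EquationEnergieabschatzungLinearisiertesProblemmitallenInhomogenitaten} yields
\begin{align*}
\|S\boldsymbol{w}_1-S\boldsymbol{w}_2\|_{C^{4+\alpha,1+\frac{\alpha}{4}}_{TJ}(\Gamma_{\ast,T})}
&\le C\sum_{i=1}^3\|\mathcal{A}^i_{LOT}(\boldsymbol{w}_1-\boldsymbol{w}_2)\|_{C^{\alpha,\frac{\alpha}{4}}(\Gamma^i_{\ast,T})}\\
&\quad+C\sum_{i=1}^6\|\mathcal{B}^i_{LOT}(\boldsymbol{w}_1-\boldsymbol{w}_2)\|_{C^{4-\sigma_i+\alpha,\frac{4-\sigma_i+\alpha}{4}}(\Sigma_{\ast,T})}.
\end{align*}
Since $\boldsymbol{w}_1-\boldsymbol{w}_2\in X_T$ vanishes at $t=0$, so do all its derivatives occurring in the lower order operators, and as these derivatives are of order strictly below the principal order they actually lie in parabolic H\"older spaces one notch higher than needed. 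Lemma \ref{LemmaContractivityLowerOrderTerms} (applied in local coordinates on $\Gamma^i_{\ast}$ and on the closed manifold $\Sigma_{\ast}$) together with Lemma \ref{LemmaProductEstimatesParabolichHspace} therefore bounds the right-hand side by $C\,T^{\bar{\alpha}}\|\boldsymbol{w}_1-\boldsymbol{w}_2\|_{C^{4+\alpha,1+\frac{\alpha}{4}}_{TJ}(\Gamma_{\ast,T})}$ for some $\bar{\alpha}>0$. Choosing $T_{\ast}$ with $C\,T_{\ast}^{\bar{\alpha}}<\tfrac12$, the map $S$ is a contraction on $X_T$ for every $T<T_{\ast}$, so it has a unique fixed point $\boldsymbol{u}\in X_T$; any solution of \eqref{EquationLinearisedSurfaceDiffusionTripleJunctionAbstractFormulaton} with $\boldsymbol{u}_0\equiv0$ belongs to $X_T$, hence it is unique. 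Finally, the same bound with $\boldsymbol{w}_2=0$ gives $\|\boldsymbol{u}\|=\|S\boldsymbol{u}\|\le\|S\boldsymbol{u}-S0\|+\|S0\|\le\tfrac12\|\boldsymbol{u}\|+\|S0\|$, and $\|S0\|$ is estimated by Corollary \ref{CorollaryLinearisedAnalysisFullInhomo} in terms of $\mathfrak{f}$ and $\mathfrak{b}$ (the shifted data for $\boldsymbol{w}=0$ being $\mathfrak{f},\mathfrak{b}$ themselves); absorbing $\tfrac12\|\boldsymbol{u}\|$ on the left reproduces the energy estimate \eqref{EquationEnergieabschatzungLinearisiertesProblemmitallenInhomogenitaten} for the full problem.

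The main obstacle is the precise bookkeeping behind the factor $T^{\bar{\alpha}}$: one must check that every term of $\widetilde{\mathcal{A}}^i_{LOT}$, of the non-local part $\bar{\mathcal{A}}^i_{LOT}$, and of each $\mathcal{B}^i_{LOT}$ genuinely drops at least one spatial derivative relative to the principal part and has coefficients of sufficient H\"older regularity (which is exactly why the reference cluster is assumed of class $C^{5+\alpha}$), so that Lemma \ref{LemmaContractivityLowerOrderTerms} is applicable after passing to local charts; once this is settled the fixed point argument and the energy estimate are routine.
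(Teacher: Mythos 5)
Your proposal is correct and follows essentially the same route as the paper: a Banach fixed point argument on the space of functions vanishing at $t=0$, absorbing the lower order (including non-local) terms into the inhomogeneities, checking that the compatibility conditions are preserved, and using the energy estimate \eqref{EquationEnergieabschatzungLinearisiertesProblemmitallenInhomogenitaten} together with Lemma \ref{LemmaContractivityLowerOrderTerms} and Lemma \ref{LemmaProductEstimatesParabolichHspace} to obtain a $T^{\bar{\alpha}}$-small contraction constant. Your explicit recovery of the energy estimate from $\|S0\|$ is a detail the paper leaves implicit, but the argument is the same.
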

\begin{proof}
	We consider \eqref{EquationLinearisedSurfaceDiffusionTripleJunctionAbstractFormulaton} as a perturbation of \eqref{EquationReducedLinearProblem}. That is, given any 
	\begin{align*}\boldsymbol{u}\in X:=\left\{\boldsymbol{u}\in C^{4+\alpha, 1+\frac{\alpha}{4}}_{TJ}(\Gamma_{\ast,T})\Bigg|\ \boldsymbol{u}\big|_{t=0}\equiv 0\right\}
	\end{align*}
	we want to solve (\ref{EquationReducedLinearProblem}) with the inhomogeneities
	\begin{align*}
	\widetilde{\mathfrak{f}}^i(\boldsymbol{u})=\mathfrak{f}^i-\widetilde{\mathcal{A}}^i_{LOT}u^i-\bar{\mathcal{A}}^i_{LOT}\trsigma{\bu},\quad\widetilde{\mathfrak{b}}^i(\boldsymbol{v})=\mathfrak{b}^i-\mathcal{B}_{LOT}^i\boldsymbol{u}.
	\end{align*}
	Note that $\widetilde{\mathfrak{f}}^i$ and $\widetilde{\mathfrak{b}}^i$ fulfill the compatibility conditions (CCP) as $\mathcal{A}^i_{LOT}$ and $\mathcal{B}^i_{LOT}$ vanish on $X$ at $t=0$. This implies that the solution $\Lambda(\boldsymbol{u})$ exists due to Corollary \ref{CorollaryLinearisedAnalysisFullInhomo}. We claim that for $T$ small enough the map $\Lambda: X\to X$ is a contraction mapping. Then, Banach's fixed point theorem gives us the unique solution. \\
	For $\boldsymbol{u},\boldsymbol{v}\in X$ we note that the difference $\Lambda(\boldsymbol{u})-\Lambda(\boldsymbol{v})$ solves (\ref{EquationReducedLinearProblem}) with inhomogeneities $$\widetilde{\mathfrak{f}}^i=\widetilde{\mathcal{A}}_{LOT}^i(v^i-u^i)+\bar{\mathcal{A}}^i_{LOT}(\trsigma{\bv}-\trsigma{\bu}), \quad \widetilde{\mathfrak{b}}^i=\mathcal{B}^i_{LOT}(\boldsymbol{v}-\boldsymbol{u}),$$ and due to estimate (\ref{EquationEnergieabschatzungLinearisiertesProblemmitallenInhomogenitaten}) we have
	\begin{align*}
	\|\Lambda(\boldsymbol{u})-\Lambda(\boldsymbol{v})\|_X&\le C_1\sum_{i=1}^3\|\mathcal{A}^i_{LOT}(\boldsymbol{v}-\boldsymbol{u},\trsigma{(\bv-\bu)})\|_{C^{\alpha,\frac{\alpha}{4}}(\Gamma^i_{\ast,T})}\\ &+C_2\sum_{i=1}^6\|\mathcal{B}^i_{LOT}(\boldsymbol{v}-\boldsymbol{u})\|_{C^{4-\sigma_i+\alpha,\frac{4-\sigma_i+\alpha}{4}}(\Sigma_{\ast,T})}.
	\end{align*}
	There are two different kind of terms that appear in the perturbation operators. One are lower order partial derivatives of $\boldsymbol{u-v}$ with $C^{\alpha}$-coefficients only depending on the reference geometry, which are given by $\widetilde{\mathcal{A}}^i_{LOT}(v^i-u^i)$ and $\mathcal{B}^i_{LOT}(\bv-\bu)$. For these we may directly apply Lemma \ref{LemmaContractivityLowerOrderTerms} to get the sought contractivity property if $T$ is sufficiently small. Note that the regularity of the coefficient functions is not a problem due to Lemma \ref{LemmaProductEstimatesParabolichHspace}. The other terms are the non-local terms $\bar{\mathcal{A}}^i_{LOT}(\trsigma{(\bv-\bu)}$. But here we see that they are also of lower order (second order terms are the highest order arising) and by the chain rule all space and time derivatives are bounded by space and time derivatives on the boundary and so by the H\"older-norm of $\boldsymbol{u-v}$ itself. So, this shows us that for $T$ sufficiently small $\Lambda$ is a $\frac{1}{2}$-contraction, which finishes the proof.  
\end{proof}
\begin{corollary}[General Initial Data for \eqref{EquationReducedLinearProblem}]
	The result of Corollary \ref{CorollaryLOTforLinearizedProbelm} stays true for all initial data $\bu_0$ fulfilling the compatibility conditions \eqref{EquationCompatibilityConditionsLinearisedTJ}, whereby the energy estimate is replaced by \eqref{EquationEnergieabschatzungenLineareGleichungVollTJ}. In particular, this proves Theorem \ref{theoremShorttimeexistenceLinearisedSDFTJGeneralInitialdata}.
\end{corollary}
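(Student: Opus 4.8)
The plan is to reduce the problem with general initial data to the case $\bu_0\equiv0$ settled in Corollary \ref{CorollaryLOTforLinearizedProbelm}, by subtracting from the sought $\bu$ a suitable extension of the initial data and then combining the corresponding estimates. The first and only substantial task is therefore to construct an auxiliary $\bu_\ast\in C^{4+\alpha,1+\frac{\alpha}{4}}_{TJ}(\GammaastT)$ with $\bu_\ast\big|_{t=0}=\bu_0$ on $\Gammaast$ and $\|\bu_\ast\|_{C^{4+\alpha,1+\frac{\alpha}{4}}_{TJ}(\GammaastT)}\le C\|\bu_0\|_{C^{4+\alpha}_{TJ}(\Gammaast)}$, whose residuals match the data at $t=0$, i.e.
\begin{align*}
\mathcal{B}^i\bu_\ast\big|_{t=0}=\mathfrak{b}^i\big|_{t=0},\qquad \Big(\sum_{i=1}^3\gamma^i\big(\partial_tu_\ast^i-\mathcal{A}^i(u_\ast^i,\trsigma{\bu_\ast})\big)\Big)\Big|_{t=0}=\Big(\sum_{i=1}^3\gamma^i\mathfrak{f}^i\Big)\Big|_{t=0}.
\end{align*}
The naive ansatz $\bu_\ast(x,t)=\bu_0(x)+t\,\bw(x)$ fails, since the correction $\bw$ forced by the equation involves fourth-order spatial derivatives of $\bu_0$ and is hence only $C^{\alpha}$ in space, destroying the required $C^{4+\alpha}$-regularity. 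Instead I would obtain $\bu_\ast$ by solving, on each hypersurface $\Gammaast^i$ separately, a \emph{decoupled} fourth-order parabolic model problem --- for instance $\partial_tu_\ast^i=-\Dasti\Dasti u_\ast^i$ with initial datum $u_0^i$ and boundary conditions on $\Sigmaast$ that agree at $t=0$ with the prescribed ones and for which solvability in $C^{4+\alpha,1+\frac{\alpha}{4}}$ follows from the scalar parabolic theory of \cite{ladyzhenskaia1968linear}; this is the same kind of construction as in the shifting step of Corollary \ref{CorollaryLinearisedAnalysisFullInhomo}, now used to encode the initial data. Parabolic smoothing gives the regularity up to $t=0$, and the analytic compatibility conditions \eqref{EquationCompatibilityConditionsLinearisedTJ} ensure the traces at $t=0$ are as required. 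Alternatively one may build $\bu_\ast$ by a direct H\"older lifting of $\bu_0$ into the parabolic slab in the boundary-flattening charts \eqref{EquationParmaetrizationnearboundary}, correcting the boundary traces afterwards with the solution operators already available.

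With such $\bu_\ast$ fixed, put $\boldsymbol{w}:=\bu-\bu_\ast$. Then $\boldsymbol{w}$ solves \eqref{EquationLinearisedSurfaceDiffusionTripleJunctionAbstractFormulaton} with vanishing initial data and with modified inhomogeneities
\begin{align*}
\widehat{\mathfrak{f}}^i:=\mathfrak{f}^i-\partial_tu_\ast^i+\mathcal{A}^i(u_\ast^i,\trsigma{\bu_\ast}),\qquad \widehat{\mathfrak{b}}^i:=\mathfrak{b}^i-\mathcal{B}^i\bu_\ast,
\end{align*}
which by construction satisfy $\widehat{\mathfrak{b}}^i\big|_{t=0}=0$ and $\big(\gamma^1\widehat{\mathfrak{f}}^1+\gamma^2\widehat{\mathfrak{f}}^2+\gamma^3\widehat{\mathfrak{f}}^3\big)\big|_{t=0}=0$, i.e.\ precisely the homogeneous compatibility conditions required by Corollary \ref{CorollaryLOTforLinearizedProbelm}. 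Hence, for $T<T_{\ast}$, that corollary provides a unique $\boldsymbol{w}\in C^{4+\alpha,1+\frac{\alpha}{4}}_{TJ}(\GammaastT)$ with $\boldsymbol{w}\big|_{t=0}\equiv0$, so $\bu:=\boldsymbol{w}+\bu_\ast$ is a solution of \eqref{EquationLinearisedSurfaceDiffusionTripleJunctionAbstractFormulaton} with the prescribed initial data; uniqueness is immediate, because the difference of two such solutions has zero initial data and zero inhomogeneities and thus vanishes by the uniqueness part of Corollary \ref{CorollaryLOTforLinearizedProbelm}. Finally, combining the energy estimate of Corollary \ref{CorollaryLOTforLinearizedProbelm} applied to $\boldsymbol{w}$ with the bound on $\|\bu_\ast\|$ from the construction --- estimating the extra contributions of $\partial_t\bu_\ast$, $\mathcal{A}\bu_\ast$, $\mathcal{B}\bu_\ast$ to $\widehat{\mathfrak{f}}$ and $\widehat{\mathfrak{b}}$ by that same bound --- yields the estimate \eqref{EquationEnergieabschatzungenLineareGleichungVollTJ}. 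As this is exactly the statement of Theorem \ref{theoremShorttimeexistenceLinearisedSDFTJGeneralInitialdata}, the theorem is proved.

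The genuine obstacle is the construction of $\bu_\ast$: an extension of $\bu_0$ that is at once in $C^{4+\alpha,1+\frac{\alpha}{4}}_{TJ}(\GammaastT)$ with controlled norm, has initial trace $\bu_0$, and makes the resulting inhomogeneities compatible at $t=0$. The difficulty is the clash between the parabolic scaling --- one time derivative costs four spatial derivatives --- and the mere $C^{4+\alpha}$-regularity of $\bu_0$, which rules out a polynomial-in-time ansatz and forces a parabolic-smoothing resp.\ solution-operator construction; everything else is bookkeeping within the already-established linear theory.
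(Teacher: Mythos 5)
Your overall reduction (subtract an extension of the initial data, apply Corollary \ref{CorollaryLOTforLinearizedProbelm} to the difference, add back and combine estimates) is exactly the skeleton of the paper's argument, but the step you single out as "the genuine obstacle" contains a real gap, and moreover the obstacle itself is illusory. Since only the zeroth-order compatibility conditions enter Corollary \ref{CorollaryLinearisedAnalysisFullInhomo}/\ref{CorollaryLOTforLinearizedProbelm} (namely $\widehat{\mathfrak{b}}^i\big|_{t=0}=0$ and $\sum_i\gamma^i\widehat{\mathfrak{f}}^i\big|_{t=0}=0$), the trivial time-constant extension $\bu_\ast(x,t):=\bu_0(x)$ already does everything you ask of $\bu_\ast$: it lies in $C^{4+\alpha,1+\frac{\alpha}{4}}_{TJ}$ with norm $\|\bu_0\|_{C^{4+\alpha}_{TJ}(\Gammaast)}$, the residual $\widehat{\mathfrak{f}}^i=\mathfrak{f}^i+\mathcal{A}^i(u_0^i,\bu_0|_{\Sigmaast})$ is time-independent and hence in $C^{\alpha,\frac{\alpha}{4}}$ (no $C^{4+\alpha}$-regularity of the residual is ever needed, so the parabolic-scaling clash you describe does not arise), $\widehat{\mathfrak{b}}^i=\mathfrak{b}^i-\mathcal{B}^i\bu_0$ has the right regularity, and \eqref{EquationCompatibilityConditionsLinearisedTJ} gives precisely the two vanishing conditions at $t=0$ (for the bulk condition: $\partial_t\bu_\ast\equiv0$, so $\sum_i\gamma^i\widehat{\mathfrak{f}}^i|_{t=0}=\mathcal{B}_0(\bu_0)+\sum_i\gamma^i\mathcal{A}^i_{all}u_0^i=0$). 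This "shift by $\bu_0$" is the paper's proof, following the corresponding step in \cite{depner2014mean}.

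By contrast, your replacement construction does not deliver what is needed. If $\bu_\ast$ solves the decoupled problems $\partial_t u_\ast^i=-\Dasti\Dasti u_\ast^i$ with $u_\ast^i|_{t=0}=u_0^i$ and is of class $C^{4+\alpha,1+\frac{\alpha}{4}}$ up to $t=0$, then necessarily $\partial_t u_\ast^i|_{t=0}=-\Dasti\Dasti u_0^i$ on $\Sigmaast$, so the compatibility condition you must verify, $\sum_i\gamma^i\partial_t u_\ast^i|_{t=0}=0$ on $\Sigmaast$ (equivalently your displayed requirement combined with \eqref{EquationCompatibilityConditionsLinearisedTJ}), becomes $\sum_i\gamma^i\Dasti\Dasti u_0^i=0$ on $\Sigmaast$ --- a condition on $\bu_0$ alone that \eqref{EquationCompatibilityConditionsLinearisedTJ} does not imply (it only relates $\mathfrak{f}|_{t=0}$ and $\mathfrak{b}|_{t=0}$ to $\bu_0$ through the full operators $\mathcal{A}^i_{all}$, $\mathcal{B}^i$). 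So for general admissible data your $\widehat{\mathfrak{f}}$ violates the compatibility condition of Corollary \ref{CorollaryLOTforLinearizedProbelm} and the reduction breaks down; the assertion that "the analytic compatibility conditions ensure the traces at $t=0$ are as required" is exactly the unverified point. In addition, $C^{4+\alpha,1+\frac{\alpha}{4}}$-regularity of the auxiliary problems up to $t=0$ is itself a corner-compatibility issue between $u_0^i$ and the chosen auxiliary boundary data, not a consequence of parabolic smoothing, which only acts for $t>0$. Replacing your construction by the constant-in-time shift repairs the proof and reduces it to the bookkeeping you carried out correctly in the second paragraph.
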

\begin{proof}
	We can do the same procedure as in \cite{depner2014mean} by shifting the equation by $\bu_0$. Condition (CLP) will guarantee that the compatibility condition for Corollary \ref{CorollaryLOTforLinearizedProbelm} are fulfilled and the additional inhomogeneities give us the sought energy estimate.
\end{proof}
\section{Non-linear Analysis}\label{SectionNonLinearAnalysis}
In this section we will use Theorem \ref{theoremShorttimeexistenceLinearisedSDFTJGeneralInitialdata} to prove short time existence for our original problem \eqref{EquationSurfaceDiffusionTripleJunctionGeometricVersiononRrenceFrame}. 
We want to note that unlike the authors of \cite{depner2014mean} we prefer not to work with the parabolic version \eqref{EquationAnalyticFormulationofSDFTJ}. By choosing the more geometric formulation \eqref{EquationSurfaceDiffusionTripleJunctionGeometricVersiononRrenceFrame} it is more convenient to exploit the quasi-linear structure of our system to derive contraction estimates. This includes also the non-local term, which indeed can be treated like the other quasi-linear terms. Actually, we will have to put the most work into the angle conditions as these are fully non-linear.\\
Our main strategy is now to write (\ref{EquationSurfaceDiffusionTripleJunctionGeometricVersiononRrenceFrame}) as a fixed-point problem which we do in the following way. For $\boldsymbol{\rho}_0\in C^{4+\alpha}_{TJ}(\Gammaast)$, $\sigma_i$ as in \eqref{EquationDefinitionofSigma}, $R,\varepsilon>0$ and $\delta>0$, which we always assume to be smaller than the existence time from Theorem \ref{theoremShorttimeexistenceLinearisedSDFTJGeneralInitialdata}, we consider the sets\footnote{Note that in $X_{R,\delta}^{\brho_0}$ we include the sum condition for $\brho$ on the parabolic boundary to guarantee compatibility conditions, which we will see in the proof of Lemma \ref{LemmaWelldefinednessofLambdaTripleJunction}.}
\begin{align*}
X_{R,\delta}^{\varepsilon}&:=\left\{\boldsymbol{\rho}\in C^{4+\alpha, 1+\frac{\alpha}{4}}_{TJ}(\Gammaastdelta)\ \Big|\ \|\boldsymbol{\rho}(0)\|_{C^{4+\alpha}(\Gammaast)}\le\varepsilon,
\|\boldsymbol{\rho}-\boldsymbol{\rho}(0)\|_{X_{R,\delta}}\le R\right\},\\
X_{R,\delta}^{\brho_0}&:=\left\{\boldsymbol{\rho}\in C^{4+\alpha, 1+\frac{\alpha}{4}}_{TJ}(\Gammaastdelta)\ \Big|\ \brho(0)=\brho_0, \sum_{i=1}^3\gamma^i\rho^i=0\text{ on }\Sigmaastdelta,
\|\boldsymbol{\rho}-\brho_0\|_{X_{R,\delta}}\le R\right\},\\
Y_{\delta}&:=C^{\alpha, \frac{\alpha}{4}}_{TJ}(\Gammaastdelta)\times\left(\prod_{i=1}^{6}C^{4-\sigma_i+\alpha, \frac{4-\sigma_i+\alpha}{4}}(\Sigmaastdelta)\right).
\end{align*}
We will denote by $\|\cdot\|_{X_{R,\delta}}$ and $\|\cdot\|_{Y_{\delta}}$ the canonical norms on $X_{R,\delta}^{\varepsilon}$ (resp. $X_{R,\delta}^{\brho_0}$) and $Y_{\delta}$. Observe that we have $\|\brho_0\|_{X_{R,\delta}}=\|\brho_0\|_{C^{4+\alpha}_{TJ}(\Gammaast)}$ and therefore 
\begin{align}\label{EquationEstimateforelementsofXRdelta}
\forall \brho\in X_{R,\delta}: \|\brho\|_{X_{R,\delta}}\le R+\|\brho_0\|_{C^{4+\alpha}_{TJ}(\Gammaast)}.
\end{align}
On these sets we consider the inhomogeneities operator $S:=(\mathfrak{f}, \mathfrak{b}): X_{R,\delta}^{\brho_0}\to Y_{\delta}$ given by
\begin{align}
\mathfrak{f}^i(\boldsymbol{\rho})&:=\partial_t\rho^i-V_{\boldsymbol{\rho}}^i+\Delta_{\brho}H_{\brho}^i-\mathcal{A}^i(\rho^i, \boldsymbol{\rho}\big|_{\Sigma_{\ast}}), & &i=1,2,3,\label{EquationDefinitionInhomeneityf}\\
\mathfrak{b}^i(\boldsymbol{\rho})&:=\mathcal{B}^i(\boldsymbol{\rho})-G^i(\boldsymbol{\rho}), & &i=1,...,6, \label{EquationInhomogeneityOperatorBoundary}
\end{align}
where we used the notation from Section \ref{SectionParametrizationCom} and \ref{SectionLinearization}. Furthermore, we define $L: Y_{\delta}\to X_{R,\delta}^{\brho_0}$ as the solution operator from Theorem \ref{theoremShorttimeexistenceLinearisedSDFTJGeneralInitialdata} and $\Lambda:=L\circ S: X_{R,\delta}^{\brho_0}\to X_{R,\delta}^{\brho_0}$. The main result of this section will now be the following.
\begin{proposition}[Existence of a fixed-point of $\Lambda$]\label{TheoremSTESurfaceDiffTripleJunction}\ \\ 
	There exists $\varepsilon_0, R_0>0$ with the following property. For all $R>R_0$ and $\varepsilon<\varepsilon_0$ there exists a $\delta>0$ such that for all $\boldsymbol{\rho}_0\in  C^{4+\alpha}(\Gamma_{\ast})$, fulfilling $\|\brho_0\|\le\varepsilon_0$ and the geometric compatibility conditions (\ref{EquationGeometricCompabilityConditionforSDFTJ}), the map $\Lambda: X_{R,\delta}^{\brho_0}\to X_{R,\delta}^{\brho_0}$ is well-defined and there exists a unique fixed-point of $\Lambda$ in $X_{R,\delta}$.
\end{proposition}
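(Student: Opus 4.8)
The strategy is the standard Banach fixed-point argument: show that for suitable parameters $R,\varepsilon,\delta$ the map $\Lambda=L\circ S$ maps $X_{R,\delta}^{\brho_0}$ into itself and is a contraction there. The crux is that in the definition of $\mathfrak{f}$ and $\mathfrak{b}$ we have subtracted exactly the linearization computed in Section \ref{SectionLinearization}, so that $S(\brho)$ consists only of quadratic-and-higher remainder terms in $\brho-\brho_0$ (plus the part coming from linearizing in $\brho_0$ rather than in a general point, which by Remark \ref{RemarkOnLinearisationofSDFTJ} and the perturbation philosophy is of strictly lower order). First I would verify well-definedness: one must check that $S$ actually maps into $Y_\delta$, i.e. that $\mathfrak{f}^i(\brho)\in C^{\alpha,\alpha/4}_{TJ}(\Gammaastdelta)$ and $\mathfrak{b}^i(\brho)\in C^{4-\sigma_i+\alpha,\frac{4-\sigma_i+\alpha}{4}}(\Sigmaastdelta)$, which follows from the product estimate Lemma \ref{LemmaProductEstimatesParabolichHspace} together with smoothness of the coefficient functions $a^{k+s}$ coming from the $C^{5+\alpha}$-regularity of $\Gammaast$; and that the compatibility conditions (CLP) required by Theorem \ref{theoremShorttimeexistenceLinearisedSDFTJGeneralInitialdata} hold for $S(\brho)$. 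This last point is where the geometric compatibility conditions (GCC), equivalently (ACC), on $\brho_0$ enter: since $\brho\in X_{R,\delta}^{\brho_0}$ satisfies $\brho(0)=\brho_0$ and $\sum_i\gamma^i\rho^i=0$ on $\Sigmaastdelta$ (this is why that condition is built into the set, cf. the footnote), the values of $\mathfrak{f}^i,\mathfrak{b}^i$ at $t=0$ reduce precisely to the combination appearing in (ACC), so (CLP) is satisfied and $L\circ S$ is defined. Hence $\Lambda:X_{R,\delta}^{\brho_0}\to C^{4+\alpha,1+\alpha/4}_{TJ}(\Gammaastdelta)$ with $\Lambda(\brho)(0)=\brho_0$ and the sum condition propagated by the first linear boundary condition.

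Next I would prove the self-mapping property. For $\brho\in X_{R,\delta}^{\brho_0}$ write $\Lambda(\brho)-\brho_0 = L(S(\brho))-\brho_0$; using the energy estimate \eqref{EquationEnergieabschatzungenLineareGleichungVollTJ} this is controlled by $\|S(\brho)\|_{Y_\delta}$ plus the contribution of the initial datum, and the key is that every term in $S(\brho)$ either vanishes at $t=0$ or is a genuinely nonlinear remainder. For the terms that vanish at $t=0$ one applies the contractivity Lemma \ref{LemmaContractivityLowerOrderTerms} to gain a factor $\delta^{\bar\alpha}$; for the quasilinear structure of $-\Delta_\brho H_\brho$ and of the flux conditions one expands the coefficients around $\brho_0$ and uses \eqref{EquationEstimateforelementsofXRdelta} together with the product estimates, so that $\|S(\brho)\|_{Y_\delta}\le C\big(\|\brho_0\|_{C^{4+\alpha}}\big)\big(\delta^{\bar\alpha}(R+\|\brho_0\|)+ (R+\|\brho_0\|)^2\big)+ C\|\brho_0\|$-type bound, and choosing first $\varepsilon_0$ small, then $R_0$ of the order of that $C\|\brho_0\|$-term, then $\delta$ small, the right-hand side is $\le R$. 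The contraction estimate is analogous: for $\brho_1,\brho_2\in X_{R,\delta}^{\brho_0}$ the difference $S(\brho_1)-S(\brho_2)$ is, schematically, $\int_0^1 DS(\brho_2+s(\brho_1-\brho_2))\,ds\,(\brho_1-\brho_2)$, and since $DS$ vanishes at the linearization point (by construction of $\mathfrak f,\mathfrak b$) its norm on $X_{R,\delta}^{\brho_0}$ is $O(R+\varepsilon)$; combined with the $\delta^{\bar\alpha}$ gain from Lemma \ref{LemmaContractivityLowerOrderTerms} on the terms that vanish at $t=0$, one gets $\|\Lambda(\brho_1)-\Lambda(\brho_2)\|_{X_{R,\delta}}\le \tfrac12\|\brho_1-\brho_2\|_{X_{R,\delta}}$ after shrinking $\delta$. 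Banach's fixed-point theorem on the (nonempty, closed) set $X_{R,\delta}^{\brho_0}$ then yields the unique fixed point.

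The main obstacle is the treatment of the fully nonlinear angle conditions $\mathcal G^2,\mathcal G^3$ and, to a lesser extent, the non-local tangential term $\boldsymbol\mu=\mathcal T\brho$ entering through $\tau_\ast^i$. For the angle conditions one must show that $\mathfrak b^2,\mathfrak b^3$, which involve $\langle N^i_\brho,N^j_\brho\rangle$ minus its linearization, are Lipschitz in $\brho$ as maps into $C^{3+\alpha,(3+\alpha)/4}(\Sigmaastdelta)$ with a small constant; since the normals $N^i_\brho$ depend on $\brho$ through first derivatives, the inner product depends on $\brho$ up to first order, and after subtracting the linear part one is left with a smooth nonlinear function of $(\brho,\nabla\brho)|_{\Sigmaast}$ vanishing to second order — the needed estimates follow from the composition/Nemytskii properties in parabolic Hölder (identified with Besov spaces as remarked at the end of Section \ref{SectionPrelim}) together with Lemma \ref{LemmaProductEstimatesParabolichHspace}, but bookkeeping the orders so that everything lands in the correct $C^{4-\sigma_i+\alpha}$ space is the delicate part. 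The non-local term causes no genuine difficulty: as noted in the remark on analytic aspects of the non-local term, on the support of $\tau_\ast^i$ the projection $\pr^i_\Sigma$ is smooth, $\mathcal T$ is a constant matrix, so $\boldsymbol\mu$ is as regular as $\brho|_{\Sigmaast}$ and all estimates go through as for the local quasilinear terms. Once these estimates are in place, the choice of constants in the order $\varepsilon_0\to R_0\to\delta$ closes the argument.
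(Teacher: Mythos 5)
Your overall route is the paper's: well-definedness of $S$ together with the compatibility conditions (via \eqref{EquationGeometricCompabilityConditionforSDFTJ} and the sum condition built into $X_{R,\delta}^{\brho_0}$, as in Lemma \ref{LemmaWelldefinednessofLambdaTripleJunction}), contraction estimates exploiting the quasilinear structure plus a Lunardi-type treatment of the fully nonlinear angle conditions, and Banach's fixed-point theorem with the parameters fixed in the order $\varepsilon_0\to R_0\to\delta$. However, the self-mapping step as you wrote it would fail: your schematic bound $\|S(\brho)\|_{Y_{\delta}}\le C\big(\delta^{\bar\alpha}(R+\|\brho_0\|)+(R+\|\brho_0\|)^2\big)+C\|\brho_0\|$ contains a quadratic term with no small prefactor, and since the conclusion must be $\le R$ for arbitrarily large $R>R_0$, a contribution of size $CR^2$ cannot be absorbed by any choice of $\delta$. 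The paper avoids this (Lemma \ref{LemmaSelfMappingofLambdaTripleJunction}) by writing $\|\Lambda(\bu)-\brho_0\|\le\|\Lambda(\bu)-\Lambda(\brho_0)\|+\|\Lambda(\brho_0)-\brho_0\|\le \tfrac{R}{2}+\|\Lambda(\brho_0)-\brho_0\|$, using the already established $\tfrac12$-contraction, and then showing that $\bw=\Lambda(\brho_0)-\brho_0$ solves the linear problem with zero initial data, zero boundary inhomogeneities (by \eqref{EquationGeometricCompabilityConditionforSDFTJ} and the time-independence of $\mathcal{B}-\mathcal{G}$) and the single inhomogeneity $\mathfrak{f}^i_0=\Delta_{\brho_0}H^i_{\brho_0}$, so that $\|\bw\|\le C'(\varepsilon)$ uniformly in $\brho_0$; choosing $R_0>2C'(\varepsilon)$ closes the step. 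Equivalently one can estimate $\|S(\brho)\|\le\|S(\brho)-S(\brho_0)\|+\|S(\brho_0)\|$ and use the contraction estimate on the first term, but in either case the bare $R^2$-term must not appear.

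A second imprecision concerns the contraction mechanism. The phrase ``$DS$ is $O(R+\varepsilon)$, combined with the $\delta^{\bar\alpha}$ gain'' suggests a contraction constant of the form $(R+\varepsilon)\delta^{\bar\alpha}$; the correct constant (Lemma \ref{LemmaContractionEstimatesTripleJunction}, Corollary \ref{CorollaryContractivityTripleJunction}) is $C(\Gammaast,R)\delta^{\bar\alpha}+C(\Gammaast)\varepsilon$. The contributions that come from linearizing at $\brho=0$ rather than at $\brho_0$ (the $\Theta_0$-coefficients in the angle conditions and the $\brho_0$-dependent parts elsewhere) do not vanish at $t=0$, hence are not damped by $\delta$ and are only controlled by $\varepsilon$; this is precisely why $\varepsilon_0$ must be fixed small before $\delta$ and cannot be replaced by shrinking $\delta$. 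Moreover, for $\mathfrak{b}^2,\mathfrak{b}^3$ the $\delta^{\bar\alpha}$ gain must be extracted from the coefficients of the mean-value representation of Lemma \ref{LemmaRepresentationN12} (which vanish at $t=0$ because both endpoints of the segment equal $\brho_0$ there, used together with \eqref{EquationProductEstimatesHolderSpaces2}), not from $\nabla(u^i-w^i)$, whose full $C^{3+\alpha,\frac{3+\alpha}{4}}$-norm is needed for the product to land in the right space; your ``vanishing to second order plus Nemytskii'' sketch should be made precise along these lines.
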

The proof splits into three main parts. We will first verify that if we choose $\varepsilon$ sufficiently small we can guarantee that $\Lambda$ is well-defined as long as $\delta(R)$ is also sufficiently small. Then, we will check that for $\varepsilon$ small and $R$ large we can find a $\delta(R,\varepsilon)$ such that $\Lambda$ is a $\frac{1}{2}$-contraction. Finally, we will see that with this choice of $\delta$ we can choose $R$ large enough such that $\Lambda$ is also a self-mapping on $X_{R,\delta}^{\brho_0}$.  
\begin{lemma}[Well-definedness of $\Lambda$]\label{LemmaWelldefinednessofLambdaTripleJunction}\ \vspace{-0,2cm}
	\begin{itemize}
		\item[i.)] There is a $\varepsilon_W>0$ such that for any $R>0$ and $\varepsilon<\varepsilon_W$ there is a $\delta_W(\varepsilon, R)>0$ such that $S$ is a well-defined map $X_{R,\delta}^{\varepsilon}\to Y_{\delta}$.\vspace{-0,2cm}
		\item[ii.)] For all initial data $\brho_0$ fulfilling the geometric compatibility condition (\ref{EquationGeometricCompabilityConditionforSDFTJ}) and the bound from i.) we have that $S(\brho)$ fulfills the linear compatibility condition $(\ref{EquationCompatibilityConditionsLinearisedTJ})$ for all $\brho\in X_{R,\delta}^{\brho_0}$.
		\vspace{-0,2cm}
		\item[iii.)] Choosing $\varepsilon,R,\delta$ and $\brho_0$ as in i.),ii.) the map $\Lambda: X_{R,\delta}^{\brho_0}\to X_{R,\delta}^{\brho_0}$ is well-defined.
	\end{itemize}
\end{lemma}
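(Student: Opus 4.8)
The plan is to prove the three assertions in order. Part i.) is the bulk of the work: one must check that the nonlinear geometric operators building $S$ are defined on $X_{R,\delta}^{\varepsilon}$ and land in the target spaces $Y_{\delta}$. Part ii.) is the only genuinely non‑formal point; it uses the geometric compatibility conditions (GCC), the fact that the sum condition is imposed on all of $\Sigmaastdelta$ in the definition of $X_{R,\delta}^{\brho_0}$ (this is the point of the footnote there), and a force‑balance identity. Part iii.) is then a short composition argument, and the self‑mapping bound is deliberately deferred.

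For i.), the difficulty is that an element $\brho\in X_{R,\delta}^{\varepsilon}$ is only small at $t=0$, whereas $\|\brho-\brho(0)\|_{X_{R,\delta}}$ may be as large as $R$. Since $\brho-\brho(0)$ vanishes at $t=0$, Lemma \ref{LemmaContractivityLowerOrderTerms} (with $k=4$, $k'=2$, applied in local charts) bounds $\|\brho-\brho(0)\|_{C^{2+\alpha,\frac{2+\alpha}{4}}_{TJ}(\Gammaastdelta)}$ by $C\delta^{\bar{\alpha}}\|\brho-\brho(0)\|_{C^{4+\alpha,1+\frac{\alpha}{4}}_{TJ}(\Gammaastdelta)}\le C\delta^{\bar{\alpha}}R$, so that $\sup_{t\in[0,\delta]}\|\brho(t)\|_{C^{2+\alpha}_{TJ}(\Gammaast)}\le\varepsilon+C\delta^{\bar{\alpha}}R$. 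Choosing first $\varepsilon<\varepsilon_W$ (a threshold depending only on $\Gammaast$) and then $\delta=\delta_W(\varepsilon,R)$ small, the right‑hand side stays below the bound which by \cite[Remark 1]{depner2014mean} makes $\Phi^i_{\brho,\bmu}(\cdot,t)$ a diffeomorphism onto its image and $\Gamma_{\brho}(t)$ a genuine $C^{4+\alpha}$ triple junction cluster for every $t\in[0,\delta]$; since $\supp(\tauast^i)$ lies in the set where $\pr^i_{\Sigmaast}$ is defined, the non‑local part $\mu^i=\mu\circ\pr^i_{\Sigmaast}$ is likewise well defined. Consequently $N^i_{\brho}$, $\nu^i_{\brho}$, $H^i_{\brho}$, $\Delta_{\brho}H^i_{\brho}$ and $V^i_{\brho}$ all make sense with their natural regularity. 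Membership in $Y_{\delta}$ is then routine bookkeeping: in local coordinates each of these quantities is a finite sum of products of derivatives of $\brho$ (of order $\le 4$, hence at worst in $C^{\alpha,\frac{\alpha}{4}}$) with coefficients that are smooth functions of $\brho$ and its lower derivatives times $C^{1+\alpha}$‑data of $\Gammaast$; the product estimate Lemma \ref{LemmaProductEstimatesParabolichHspace} together with the composition rule for H\"older (Besov) spaces mentioned at the end of Section \ref{SectionPrelim} yields $\mathfrak{f}^i(\brho)\in C^{\alpha,\frac{\alpha}{4}}_{TJ}(\Gammaastdelta)$ and, counting the order $\sigma_i$ of each boundary operator, $\mathfrak{b}^i(\brho)\in C^{4-\sigma_i+\alpha,\frac{4-\sigma_i+\alpha}{4}}(\Sigmaastdelta)$. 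Hence $S(\brho)\in Y_{\delta}$.

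For ii.), fix $\brho_0$ satisfying (GCC) and $\brho\in X_{R,\delta}^{\brho_0}$. The second line of (CLP) is immediate: by the first line of (GCC), $\Gamma_0$ fulfils (CC), (AC), (CCP), (FB), i.e.\ $\mathcal{G}^i(\brho_0)=0$ for $i=1,\dots,6$, so $\mathfrak{b}^i(\brho)\big|_{t=0}=\mathcal{B}^i\brho_0-\mathcal{G}^i(\brho_0)=\mathcal{B}^i\brho_0$, which is the boundary datum prescribed by (CLP). For the first line one expands $\sum_i\gamma^i\mathfrak{f}^i(\brho)\big|_{t=0}$ using \eqref{EquationDefinitionInhomeneityf}: the term $\sum_i\gamma^i\partial_t\rho^i\big|_{t=0}=\partial_t\big(\sum_i\gamma^i\rho^i\big)\big|_{t=0}$ vanishes because $\sum_i\gamma^i\rho^i\equiv0$ on all of $\Sigmaastdelta$; the term $\sum_i\gamma^i\Delta_{\brho}H^i_{\brho}\big|_{t=0}$ vanishes by the second line of (GCC); and $\sum_i\gamma^iV^i_{\brho}\big|_{t=0}$ vanishes because concurrency of the triple junction — equivalent, by \eqref{EquationConditonForConcurrencyofTJ}–\eqref{EquationEquivalentCondtionforTripleJunctionConservation}, to the sum condition together with $\bmu=\mathcal{T}\brho$, both built into $X_{R,\delta}^{\brho_0}$ — forces $\partial_t\Phi^1_{\brho,\bmu}=\partial_t\Phi^2_{\brho,\bmu}=\partial_t\Phi^3_{\brho,\bmu}$ on $\Sigmaast$, so that, writing $W$ for this common value at $t=0$, $\sum_i\gamma^iV^i_{\brho}\big|_{t=0}=\big\langle W,\big(\sum_i\gamma^iN^i_{\brho}\big)\big|_{t=0}\big\rangle=0$ by the force balance $\sum_i\gamma^iN^i_{\brho}\big|_{t=0}=0$ implied by the angle condition (AC) (the analogue of the identity $\sum_i\gamma^i\nuast^i=0$ used in Section \ref{SubsectionSchauderEstimates}). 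The surviving term is $-\sum_i\gamma^i\mathcal{A}^i_{\text{all}}\rho_0^i=\mathcal{B}_0(\brho_0)$, which is the first line of (CLP).

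Finally, iii.) is composition: with $\varepsilon,R,\delta,\brho_0$ as above, $S$ maps $X_{R,\delta}^{\brho_0}$ into $Y_{\delta}$ by i.), and $S(\brho)$ satisfies (CLP) relative to the datum $\brho_0$ by ii.), so Theorem \ref{theoremShorttimeexistenceLinearisedSDFTJGeneralInitialdata} yields $\Lambda(\brho):=L(S(\brho))\in C^{4+\alpha,1+\frac{\alpha}{4}}_{TJ}(\Gammaastdelta)$ with $\Lambda(\brho)\big|_{t=0}=\brho_0$; moreover $\mathfrak{b}^1(\brho)=\mathcal{B}^1\brho-\mathcal{G}^1(\brho)=0$ since both operators equal $\gamma^1\rho^1+\gamma^2\rho^2+\gamma^3\rho^3$, so $\Lambda(\brho)$ also satisfies $\sum_i\gamma^i\Lambda(\brho)^i=0$ on $\Sigmaastdelta$. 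Thus $\Lambda(\brho)$ has all the structural features of an element of $X_{R,\delta}^{\brho_0}$, i.e.\ $\Lambda$ is well defined; the remaining estimate $\|\Lambda(\brho)-\brho_0\|_{X_{R,\delta}}\le R$ is the self‑mapping property, handled separately. The main obstacle is step ii.): part i.) is long but mechanical once the underlying geometry is under control, whereas the compatibility of $S(\brho)$ with (CLP) is not formal and genuinely needs (GCC), the placement of the sum condition in $X_{R,\delta}^{\brho_0}$, and the force‑balance identity.
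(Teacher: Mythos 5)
Your proposal is correct and takes essentially the same route as the paper: for i.) the uniform-in-time smallness of lower-order norms of $\brho$ (smallness at $t=0$ plus a $\delta^{\bar{\alpha}}R$-term from the vanishing of $\brho-\brho(0)$ at $t=0$), the citation of \cite{depner2014mean} for well-definedness of the geometric quantities, and product/composition arguments for the regularity; for ii.) the sum condition built into $X_{R,\delta}^{\brho_0}$ for the $\partial_t$-term, (GCC) for the $\Delta_{\brho}H_{\brho}$-term, and the concurrency/force-balance cancellation of $\sum_i\gamma^iV^i_{\brho}$; for iii.) composition with the linear solution operator, deferring the norm bound to the self-mapping lemma exactly as the paper does. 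The only differences are cosmetic: you make the $V^i$-cancellation explicit via $\sum_i\gamma^iN^i_{\brho_0}=0$ where the paper merely refers back to the derivation of (GCC), and your reading of the boundary part of (CLP) as $\mathfrak{b}^i\big|_{t=0}=\mathcal{B}^i\brho_0$ (equivalently $\mathcal{G}^i(\brho_0)=0$) matches the convention the paper actually uses in its own proof.
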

\begin{proof}
	For i.) we have to check both well-definedness of the geometric quantities in $\mathfrak{f}$ and $\mathfrak{b}$ and the correct regularity properties. For the first part recall that due to the H\"older-regularity in time for space derivatives we have for a multi-index $\beta$ with $1\le|\beta|\le 4$ and any $\brho\in X_{R,\delta}^{\varepsilon}$ with $\brho(0)=\brho_0\in C^{4+\alpha}_{TJ}(\Gammaast)$ that
	\begin{align}
	\|\partial_{\beta}^x\boldsymbol{\rho}(t)\|_{\infty}\le t^{\frac{4-|\beta|+\alpha}{4}}\langle\partial_{\beta}^x\boldsymbol{\rho}\rangle_{t, (4-|\beta|+\alpha)/4}+\|\partial_{\beta}^x\boldsymbol{\rho}(0)\|_{\infty}\le  t^{\frac{4-|\beta|+\alpha}{4}}(R+\varepsilon)+\|\brho_0\|_{C^{4+\alpha}_{TJ}(\Gammaast)}.
	\end{align}
	Additionally, we have that
	\begin{align}
	\|\brho(t)\|_{\infty}\le \delta\|\partial_t\brho\|_{\infty}+\|\brho(0)\|_{\infty}\le \delta (R+\varepsilon)+\|\brho_0\|_{C^{4+\alpha}_{TJ}(\Gammaast)}.
	\end{align}
	Together, this implies for $\delta<1$ and all $t\in[0,\delta]$ that
	\begin{align}\label{EquationBittekeineLabelmehr}
	\|\boldsymbol{\rho}(t)\|_{C^4_{TJ}(\Gamma_{\ast})}\le C\left(\delta (R+\varepsilon)+\|\brho_0\|_{C^{4+\alpha}_{TJ}(\Gammaast)}\right)\le C(\delta(R+\varepsilon)+\varepsilon).
	\end{align}	
	Now, for any $C'>0$ we get for $\varepsilon\le \frac{C'}{2C}$ and $\delta\le\frac{C'}{2C(R+\varepsilon)}$ that
	\begin{align}
	\|\brho(t)\|_{C^{4}_{TJ}(\Gammaast)}\le C'.
	\end{align}
	Thus, we can get a bound for the $C^2$-norm of $\boldsymbol{\rho}$ sufficiently small such that \cite[p. 326]{depner2014mean} implies that all geometric quantities - in particular the normal, the conormal and the inverse metric tensor - are well defined. It remains to show that these objects have the required regularity. For the normal and conormal this follows directly from writing these quantities in local coordinates as normalized crossproducts of tangent vectors. For the inverse metric tensor we observe that matrix inversion is a smooth operator $GL_n(\mb{R})\to GL_n(\mb{R})$. This follows by applying the inverse function theorem on the map
	\begin{align*}
	GL_n(\mb{R})\times GL_n(\mb{R})\to GL_n(\mb{R}), (A,B)\mapsto A\cdot B-\mathbb{E}_n.
	\end{align*}
	Applying now composition operator theory for $g^{-1}$ first as a function in time and then as function in space we get that $g^{-1}$ has the same H\"older-regularity as $g$. This implies now $S(\boldsymbol{\rho})\in Y_{\delta}$.\\
	For part ii) we get from the geometric compatibility conditions (\ref{EquationGeometricCompabilityConditionforSDFTJ})
	\begin{align}
	\sum_{i=1}^3\gamma^i\mathfrak{f}^i\big|_{t=0}=\sum_{i=1}^3\gamma^i\left(\partial_t\rho^i(0)- V^i_{\brho}(0) \right)-\sum_{i=1}^3\gamma^i\mathcal{A}^i_{all}(\rho^i_0) \quad\quad\text{on }\Sigma_{\ast}.
	\end{align}
	So, it remains to see that the first sum vanishes. But as we included the sum condition for $\brho$ on the triple junction on in $X_{R,\delta}^{\brho_0}$ we get immediately 
	\begin{align}
	\sum_{i=1}^3\gamma^i\partial_t\rho^i(0)=0.
	\end{align}
	For the sum of the $V^i_{\brho}$ we can argue like in the derivation of (\ref{EquationGeometricCompabilityConditionforSDFTJ}). The compatibility conditions for $\mathfrak{b}^i$ follow directly from $\mathcal{G}(\brho_0)\equiv 0$, which in total shows ii.).\\
	The last part follows from i.) and ii.) as $L$ is well-defined as long as $S(\brho)$ fulfills the linear compatibility conditions (\ref{EquationCompatibilityConditionsLinearisedTJ}).
\end{proof}
In the following we will always assume that $\varepsilon$ and $R$ are chosen such that Lemma \ref{LemmaWelldefinednessofLambdaTripleJunction} is fulfilled. Now we want to derive suitable contraction estimates for the operator $S$. Hereby, we will use the norm on $Y_{\delta}$ also when dealing with components of $S$. In the following, we will need two technical lemmas. The first one states the Lipschitz continuities of some geometrical quantities, which we will need when exploiting the quasi-linear structure of most terms. The second lemma is a representation of the scalar product of the normals on the boundary as function in the values of $\brho$ and its first order derivatives. This will be essential to use methods for fully non-linear equations from \cite{lunardi2012analytic}. For the proofs of these two properties we refer to \cite[Lemma 4.22]{goesswein2019Dissertation} and \cite[Lemma 4.24]{goesswein2019Dissertation}.
\begin{lemma}[Lipschitz continuity of geometrical quantities]\label{LemmaLipschitzcontinuitiesTJ}\ \\ 
	Suppose that $\delta,\varepsilon<1$ and $R>1$. \vspace{-0.3cm}
	\begin{itemize}
		\item[i.)] The mapping 
		\begin{align*}
		X_{R,\delta}^{\varepsilon}\to C^{4+\alpha,1+\frac{\alpha}{4}}_{TJ}(\Sigmaastdelta), \brho\mapsto \bmu(\brho),
		\end{align*} 
		is linear and Lipschitz-continuous. \vspace{-0,2cm}
		\item[ii.)] For any local parametrization $\varphi: U\to V\subset\Gamma^i_{\ast}, i=1,2,3$ the  $g_{jk}^{\brho}, g^{jk}_{\brho}$ are Lipschitz continuous as maps
		\begin{align*}
		X_{R,\delta}^{\varepsilon}\to C^{3+\alpha,\frac{3+\alpha}{4}}(U_{\delta}).
		\end{align*}
		and $N_{\brho}$ is a Lipschitz continuous function in $\brho$ as map 
		\begin{align*}
		X_{R,\delta}^{\varepsilon}\to C^{3+\alpha,\frac{3+\alpha}{4}}(\Gammaastdelta, \R^n).
		\end{align*}
		\item[iii.)] For any local parametrization $\boldsymbol{\varphi}: U\to V\subset \Sigmaast$ the $g_{jk}^{\brho}, g^{jk}_{\brho}$ and $N_{\brho}$ are Lipschitz continuous functions in $\brho$ as maps
		\begin{align*}
		X_{R,\delta}^{\varepsilon}\to C^{3+\alpha,\frac{3+\alpha}{4}}(U_{\delta}).
		\end{align*}
	\end{itemize}
	Furthermore, all arising Lipschitz constants are independent of $\delta$.
\end{lemma}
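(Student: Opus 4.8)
The plan is to reduce all three assertions to two structural facts: first, that in local coordinates the coordinate tangent vectors $\partial_k\Phi^i_{\brho,\bmu}$ of the graph \eqref{EquationNormalGraphTJ} depend affine-linearly, hence Lipschitz-continuously, on $\brho$; and second, that passing to metrics, inverse metrics and unit normals amounts to composing multilinear maps, matrix inversion and normalisation, each of which is Lipschitz on \emph{bounded} subsets of $C^{3+\alpha,\frac{3+\alpha}{4}}$ that stay away from the relevant degenerate sets (singular matrices, the zero vector). Throughout one has to keep track of where the constants come from in order to see their independence of $\delta$.

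For i.) I would first recall from \eqref{EquationEquivalentCondtionforTripleJunctionConservation} that $\bmu=\mathcal{T}\brho$ on $\Sigmaast$ with the fixed constant matrix $\mathcal{T}$, and that off the triple junction $\mu^i=\mu\circ\pr^i_{\Sigmaast}$ with the fixed smooth projection $\pr^i_{\Sigmaast}$ (defined on the support of $\tauast^i$). Thus $\brho\mapsto\bmu(\brho)$ factors as the trace operator onto $\Sigmaastdelta$, followed by multiplication with $\mathcal{T}$, followed by the pullback by $\pr^i_{\Sigmaast}$; by Remark \ref{RemarkTracesParabolicHspaces}, and since a pullback by a fixed spatial map commutes with the time variable and does not see the length of the time interval, each factor is bounded and linear with operator norm independent of $\delta$. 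A bounded linear map is Lipschitz, which proves i.).

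For ii.) and iii.) I would fix a local parametrisation of $\Gamma^i_\ast$ (resp. of $\Sigmaast$) and differentiate \eqref{EquationNormalGraphTJ}, obtaining
\[
\partial_k\Phi^i_{\brho,\bmu}= \partial_k(\mathrm{id}) + (\partial_k\rho^i)\,\Nast^i + \rho^i\,\partial_k\Nast^i + (\partial_k\mu^i)\,\tauast^i + \mu^i\,\partial_k\tauast^i .
\]
Since $\Gammaast$ is $C^{5+\alpha}$, the fields $\Nast^i,\tauast^i$ are $C^{4+\alpha}$, so all coefficient fields above are at least $C^{3+\alpha}$ in space; together with $\rho^i,\mu^i\in C^{4+\alpha,1+\frac{\alpha}{4}}$, the product estimate of Lemma \ref{LemmaProductEstimatesParabolichHspace} and part i.), this shows that $\brho\mapsto\partial_k\Phi^i_{\brho,\bmu}$ is affine-linear and Lipschitz into $C^{3+\alpha,\frac{3+\alpha}{4}}$, with constant independent of $\delta$ by \eqref{EquationEstimateforelementsofXRdelta}. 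Now $g_{jk}^{\brho}=\langle\partial_j\Phi^i_{\brho,\bmu},\partial_k\Phi^i_{\brho,\bmu}\rangle$ is bilinear in $\partial\Phi^i_{\brho,\bmu}$, and the unnormalised normal is the generalised cross product of the $\partial_k\Phi^i_{\brho,\bmu}$, hence multilinear; expanding telescoping differences and applying Lemma \ref{LemmaProductEstimatesParabolichHspace} gives Lipschitz continuity on the set where $\|\partial\Phi^i_{\brho,\bmu}\|_{C^{3+\alpha,\frac{3+\alpha}{4}}}$ is bounded, which by \eqref{EquationBittekeineLabelmehr} and \eqref{EquationEstimateforelementsofXRdelta} is the case uniformly on $X_{R,\delta}^{\varepsilon}$. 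For $g^{jk}_{\brho}$ I would compose with matrix inversion: shrinking $\varepsilon$ and then $\delta$ as in Lemma \ref{LemmaWelldefinednessofLambdaTripleJunction}, the $C^2$-norm of $\brho$ becomes so small that $g^{\brho}$ stays in a fixed compact neighbourhood of $g_{\ast}$ inside $GL_n(\R)$, on which inversion is smooth, and composition-operator (Nemytskii) theory for Hölder/Besov spaces — applied first in the time and then in the space variable, as in Lemma \ref{LemmaWelldefinednessofLambdaTripleJunction} — yields Lipschitz continuity into $C^{3+\alpha,\frac{3+\alpha}{4}}$. The normalisation $x\mapsto x/|x|$ in $N_{\brho}$ is treated the same way, since the unnormalised normal is bounded away from $0$ by the same smallness argument; part iii.) is identical, with the parametrisation of $\Gamma^i_\ast$ replaced by one of $\Sigmaast$. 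In every step the constants originate either from the fixed reference geometry, from the product and composition estimates (which are uniform in the length of the time interval), or from the $\delta$-independent bound \eqref{EquationEstimateforelementsofXRdelta}, which gives the asserted $\delta$-independence.

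The step I expect to be the main obstacle is the control of the Nemytskii operators — matrix inversion and the normalisation $x\mapsto x/|x|$ — on the parabolic Hölder spaces: one must show that they are Lipschitz on bounded, non-degenerate subsets with constants that do not deteriorate as the time interval shrinks. This is exactly what forces one to treat the space and time variables successively rather than simultaneously, and to exploit the smallness of $\varepsilon$ and $\delta$ from Lemma \ref{LemmaWelldefinednessofLambdaTripleJunction} to keep $g^{\brho}$ (resp. the unnormalised normal) inside the region where those operators are smooth.
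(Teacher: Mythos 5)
The paper itself does not prove this lemma --- it refers to \cite[Lemma 4.22]{goesswein2019Dissertation} --- so there is no in-paper argument to compare against. Judged on its own, your proposal follows the route the surrounding text suggests and is essentially sound: part i.) as trace plus multiplication by the constant matrix $\mathcal{T}$ plus pullback by the fixed projection (bounded linear maps with $\delta$-independent norms); parts ii.) and iii.) by the affine dependence of $\partial_k\Phi^i_{\brho,\bmu}$ on $\brho$, the product estimate of Lemma \ref{LemmaProductEstimatesParabolichHspace} for the polynomial quantities $g_{jk}^{\brho}$ and the unnormalised normal, and the ``first in time, then in space'' composition-operator argument for matrix inversion and normalisation, exactly the device the paper itself uses for well-definedness in the proof of Lemma \ref{LemmaWelldefinednessofLambdaTripleJunction}. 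Your bookkeeping of where the $\delta$-independence comes from (coefficients of the fixed reference geometry, product and composition constants uniform in the length of the time interval, the bound \eqref{EquationEstimateforelementsofXRdelta}) is the right one; the resulting constants depend on $R$ and $\varepsilon$, which the statement permits, since only $\delta$-independence is claimed.

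The one substantive mismatch is in your treatment of $g^{jk}_{\brho}$ and $N_{\brho}$: you invoke the smallness of $\varepsilon$ and $\delta$ from Lemma \ref{LemmaWelldefinednessofLambdaTripleJunction} to keep $g^{\brho}$ inside a fixed compact subset of $GL_n(\R)$ and the unnormalised normal away from zero, whereas the lemma is stated for arbitrary $\delta,\varepsilon<1$, $R>1$ and is applied in Lemma \ref{LemmaContractionEstimatesTripleJunction} under exactly these weak hypotheses. For $\delta$ close to $1$ and $R$ large, an element of $X^{\varepsilon}_{R,\delta}$ only satisfies $\|\brho(t)\|_{C^{4}_{TJ}(\Gammaast)}\le C\big(\delta^{\alpha/4}(R+\varepsilon)+\varepsilon\big)$, which is of size comparable to $R$ and does not rule out degeneration of $g_{\brho}$; so either the required non-degeneracy has to be made part of the statement (as the reliance on \cite{depner2014mean} for well-definedness indicates is intended), or your argument proves the lemma only after the additional restriction $\delta\le\delta_W(\varepsilon,R)$, a restriction that must then be carried into the contraction estimates --- harmless for the final theorem, since Corollary \ref{CorollaryContractivityTripleJunction} shrinks $\delta$ anyway, but a genuine weakening of the statement as written. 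Apart from this, the mean-value/telescoping argument for the Nemytskii operators along the segment between $g_{\bu}$ and $g_{\bw}$ (which stays in the small ball by convexity) is correct and does yield $\delta$-independent Lipschitz constants, so the only repair needed is to make the non-degeneracy hypothesis explicit rather than borrowed silently.
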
  
	\begin{lemma}[Representation of $N^i_{\brho}\cdot N^j_{\brho}$]\label{LemmaRepresentationN12}\ \\
	There is a function 
	\begin{align*} 
	N^{12}: C_{\delta}\times \R\times\R\times \R^n\times\R^n\to \R
	\end{align*}
	such that we have for all $\boldsymbol{\rho}\in X_{R,\delta}$ and $(\sigma,t)\in \Sigma_{\delta}$
	\begin{align*}
	\left(N^1_{\brho}\cdot N^2_{\brho}\right)(\sigma,t)=N^{1,2}(x, t, \widehat{\rho}^1(x,t), \widehat{\rho}^2(x,t), \nabla \widehat{\rho}^1(x,t), \nabla \widehat{\rho}^2(x,t)).
	\end{align*}
	Hereby, we denote by $\widehat{\rho}^1$ resp. $\widehat{\rho}^2$ the functions $\rho^1$ resp. $\rho^2$ in local coordinates with respect to a parametrisation $\boldsymbol{\varphi}=(\varphi^1,\varphi^2,\varphi^3)$, where $\varphi^i$ is a local parametrisation of $\Gamma^i_{\ast}$ locally around $\sigma$, and $x=\varphi^{-1}(\sigma)$. Additionally, all partial derivatives of $N^{12}$ with respect to the values of $\widehat{\rho}^i$ and $\partial_j \widehat{\rho}^i$ with $i=1,2$ and $j=1,...,n$, which we denote by $\partial_{[\rho^i]}$ resp. $\partial_{[\partial_j\rho^i]}$, are in $C^{3+\alpha, \frac{3+\alpha}{4}}$ and we have
	\begin{align}\label{EquationBoundPartialDerivativesN1N2inC3+alpha}
	\|\partial N^{12}\|_{C^{3+\alpha, \frac{3+\alpha}{4}}}\le C(\Gammaast,R), 
	\end{align}
	for $\partial\in \{\partial_{[\rho^i]}, \partial_{[\partial_j\rho^i]}|i=1,2, j=1,...,n\}$.
\end{lemma}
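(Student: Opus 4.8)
The plan is to trace how the two unit normals at a triple-junction point are assembled from the parametrisation $\Phi^i_{\brho,\bmu}$ and to read off both the claimed dependence on $\widehat\rho^1,\widehat\rho^2$ and the asserted $C^{3+\alpha,\frac{3+\alpha}{4}}$--regularity of the coefficients. Throughout I would fix $\sigma\in\Sigmaast$ and work in a parametrisation $\boldsymbol\varphi=(\varphi^1,\varphi^2,\varphi^3)$ of the type \eqref{EquationParmaetrizationnearboundary}, so that the last coordinate direction on $\Gamma^i_\ast$ is $-\nuast^i$ and \eqref{EquationPropertiesofCoordinatesforLopatinski} holds.

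In these coordinates $\Phi^i_{\brho,\bmu}$ is the map $x\mapsto\varphi^i(x)+\widehat\rho^i(x,t)\Nast^i(\varphi^i(x))+\widehat\mu^i(x,t)\tauast^i(\varphi^i(x))$, so every tangent vector $\partial_k\Phi^i_{\brho,\bmu}$ is an affine expression in $(\widehat\rho^i,\partial_k\widehat\rho^i,\widehat\mu^i,\partial_k\widehat\mu^i)$ whose coefficients are built from $\varphi^i,\Nast^i,\tauast^i$ and their first derivatives, hence are of class $C^{3+\alpha}$ because $\Gammaast$ is $C^{5+\alpha}$. The preparatory observation I would establish is that on $\Sigmaast$ the tangential part contributes no new unknowns: by \eqref{EquationEquivalentCondtionforTripleJunctionConservation} the values of $\bmu$ on $\Sigmaast$ are the fixed linear combinations $\mathcal T\brho$; differentiating tangentially to $\Sigmaast$ expresses the tangential derivatives of $\widehat\mu^i$ through those of the $\widehat\rho^j$; and the conormal derivative $\partialnuasta{i}\widehat\mu^i$ vanishes on $\Sigmaast$ since $\mu^i=\mu\circ\pr^i_{\Sigmaast}$ and $\pr^i_{\Sigmaast}$ is constant along the $-\nuast^i$--geodesics used in \eqref{EquationParmaetrizationnearboundary}. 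Using in addition the sum condition $\gamma^1\rho^1+\gamma^2\rho^2+\gamma^3\rho^3=0$, valid on $\Sigmaastdelta$ for $\brho\in X_{R,\delta}^{\brho_0}$, I would eliminate $\widehat\rho^3$ and its first derivatives in favour of $\widehat\rho^1,\widehat\rho^2$. The outcome is that on $\Sigmaast$ each $\partial_k\Phi^i_{\brho,\bmu}$ for $i=1,2$ equals $P^i_k(x,t,\widehat\rho^1,\widehat\rho^2,\nabla\widehat\rho^1,\nabla\widehat\rho^2)$ with $P^i_k$ affine in the $\brho$--slots and with $t$--independent, $C^{3+\alpha}$ coefficients.

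For $\brho\in X_{R,\delta}$ with $\varepsilon,\delta$ small, the $C^2$--bound on $\brho$ from Lemma \ref{LemmaWelldefinednessofLambdaTripleJunction} together with \cite[p.~326]{depner2014mean} ensures that $\{\partial_k\Phi^i_{\brho,\bmu}(\sigma,t)\}_{k=1,\dots,n}$ is linearly independent, so $N^i_{\brho}=\mathcal N(\partial_1\Phi^i_{\brho,\bmu},\dots,\partial_n\Phi^i_{\brho,\bmu})$, where $\mathcal N$ is the real-analytic normalised generalised cross product, defined on the open set of independent $n$--tuples in $\R^{n+1}$. Composing $\mathcal N$ with the maps $P^i_k$ and taking the Euclidean inner product of the vectors obtained for $i=1,2$ would produce a function $N^{12}\colon C_{\delta}\times\R\times\R\times\R^n\times\R^n\to\R$ with $(N^1_{\brho}\cdot N^2_{\brho})(\sigma,t)=N^{12}(x,t,\widehat\rho^1,\widehat\rho^2,\nabla\widehat\rho^1,\nabla\widehat\rho^2)$, smooth in the $\brho$--slots and with $t$--independent, spatially $C^{3+\alpha}$ coefficients, hence of class $C^{3+\alpha,\frac{3+\alpha}{4}}$ in $(x,t)$.

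Differentiating $N^{12}$ in any slot $[\rho^i]$ or $[\partial_j\rho^i]$ would give, by the chain and product rules, a sum of products of derivatives of the analytic map $\mathcal N$ evaluated at the current argument, of the affine maps $P^i_k$, and of the $C^{3+\alpha}$ geometric coefficients (matrix inversion being smooth, as already used in Lemma \ref{LemmaWelldefinednessofLambdaTripleJunction}); Lemma \ref{LemmaProductEstimatesParabolichHspace} together with the composition results for H\"older and Besov spaces recalled at the end of Section \ref{SectionPrelim} then yield membership in $C^{3+\alpha,\frac{3+\alpha}{4}}$. For the uniform bound \eqref{EquationBoundPartialDerivativesN1N2inC3+alpha} I would use \eqref{EquationEstimateforelementsofXRdelta}: for $\brho\in X_{R,\delta}$ the arguments $(\widehat\rho^i,\nabla\widehat\rho^i)$ stay in a compact set depending only on $R$ and $\|\brho_0\|$, on which $\mathcal N$ and finitely many of its derivatives are bounded, so the resulting constant is of the form $C(\Gammaast,R)$. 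I expect the one genuinely delicate step to be the reduction in the second paragraph --- checking that the a priori dependence of $N^i_{\brho}$ on $\partialnuasta{i}\widehat\mu^i$ and on $\widehat\rho^3$ really drops out on $\Sigmaast$, leaving only $\widehat\rho^1,\widehat\rho^2$ and their first derivatives; everything afterwards is bookkeeping with the product and composition lemmas.
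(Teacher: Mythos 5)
The paper itself gives no proof of this lemma — it is deferred to \cite[Lemma 4.24]{goesswein2019Dissertation} — so there is no in-text argument to compare against; judged on its own, your proposal follows the natural route one expects there and looks sound. Writing $N^i_{\brho}$ as the normalised cross product of the tangent vectors $\partial_k\Phi^i_{\brho,\bmu}$ in the boundary-adapted coordinates \eqref{EquationParmaetrizationnearboundary}, observing that these are affine in $(\widehat\rho^i,\nabla\widehat\rho^i,\widehat\mu^i,\nabla\widehat\mu^i)$ with $C^{3+\alpha}$ coefficients, and then removing the $\bmu$- and $\rho^3$-dependence on $\Sigmaast$ via \eqref{EquationEquivalentCondtionforTripleJunctionConservation}, the vanishing of $\partialnuasta{i}\widehat\mu^i$ (which indeed holds for the nearest-point projection, since its derivative at $\Sigmaast$ in the direction $\nuast^i$ is zero), and the sum condition, is exactly the right decomposition; the regularity and the bound \eqref{EquationBoundPartialDerivativesN1N2inC3+alpha} then follow from smoothness of the normalised cross product on the relevant compact range of slot values together with Lemma \ref{LemmaProductEstimatesParabolichHspace} and the composition results invoked in Section \ref{SectionPrelim}. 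The only point worth making explicit is that eliminating $\widehat\rho^3$ uses $\sum_i\gamma^i\rho^i=0$ on $\Sigmaastdelta$, which is encoded in $X_{R,\delta}^{\brho_0}$ (and is preserved under the convex combinations $s\boldsymbol{w}+(1-s)\boldsymbol{u}$ appearing later), so the lemma should be read for that class of $\brho$ — precisely how it is applied in Lemma \ref{LemmaContractionEstimatesTripleJunction}.
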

With these auxiliary results we can now derive contraction estimates for $S$. 
\begin{lemma}[Contraction estimates for $S$]\label{LemmaContractionEstimatesTripleJunction}\ \\
	Suppose that $\delta,\varepsilon<1$ and $R>1$. Then, for all $\boldsymbol{u},\boldsymbol{w}\in X_{R,\delta}^{\brho_0}$ there is an $\bar{\alpha}\in(0,1)$ such that the following contraction estimates hold:
	\begin{align}
	\|\partial_t\boldsymbol{u}-V_{\boldsymbol{u}}-\partial_t\boldsymbol{w}+V_{\boldsymbol{w}}\|_{Y_{\delta}}&\le C(\Gammaast)(\varepsilon+R\dalpha)\|\boldsymbol{u}-\boldsymbol{w}\|_{X_{R,\delta}},\label{EquationContractionEstimate1}\\
	\|\Delta_{\boldsymbol{u}}H_{\boldsymbol{u}}-\mathcal{A}_{\text{all}}(\boldsymbol{u})-\Delta_{\boldsymbol{w}}H_{\boldsymbol{w}}+\mathcal{A}_{\text{all}}(\boldsymbol{w})\|_{Y_{\delta}}&\le C(\Gammaast)(\varepsilon+R\dalpha)\|\boldsymbol{u}-\boldsymbol{w}\|_{X_{R,\delta}}, \label{EquationContractionEstimate2}\\
	\|\mathfrak{f}(\boldsymbol{u})-\mathfrak{f}(\boldsymbol{w})\|_{Y_{\delta}}&\le C(\Gammaast)(\varepsilon+R\dalpha)\|\boldsymbol{u}-\boldsymbol{w}\|_{X_{R,\delta}}, \label{EquationContractionEstimate3} \\
	\|\mathfrak{b}^1(\boldsymbol{u})-\mathfrak{b}^1(\boldsymbol{w})\|_{Y_{\delta}}&= 0,\\
	\|\mathfrak{b}^2(\boldsymbol{u})-\mathfrak{b}^2(\boldsymbol{w})\|_{Y_{\delta}}&\le \left(C(\Gammaast,R)\dalpha+C(\Gammaast)\varepsilon\right)\|\boldsymbol{u}-\boldsymbol{w}\|_{X_{R,\delta}},\label{EquationContractionEstimate4}\\
	\|\mathfrak{b}^3(\boldsymbol{u})-\mathfrak{b}^3(\boldsymbol{w})\|_{Y_{\delta}}&\le \left(C(\Gammaast,R)\dalpha+C(\Gammaast)\varepsilon\right)\|\boldsymbol{u}-\boldsymbol{w}\|_{X_{R,\delta}},\label{EquationContractionEstimate5}\\
	\|\mathfrak{b}^4(\boldsymbol{u})-\mathfrak{b}^4(\boldsymbol{w})\|_{Y_{\delta}}&\le C(\Gammaast)R\dalpha\|\boldsymbol{u}-\boldsymbol{w}\|_{X_{R,\delta}},\label{EquationContractionEstimate6}\\
	\|\mathfrak{b}^5(\boldsymbol{u})-\mathfrak{b}^5(\boldsymbol{w})\|_{Y_{\delta}}&\le C(\Gammaast)R\dalpha\|\boldsymbol{u}-\boldsymbol{w}\|_{X_{R,\delta}},\label{EquationContractionEstimate7}\\
	\|\mathfrak{b}^6(\boldsymbol{u})-\mathfrak{b}^6(\boldsymbol{w})\|_{Y_{\delta}}&\le C(\Gammaast)R\dalpha\|\boldsymbol{u}-\boldsymbol{w}\|_{X_{R,\delta}},\label{EquationContractionEstimate8}\\
	\|S(\boldsymbol{u})-S(\boldsymbol{w})\|_{Y_{\delta}}&\le \left(C(\Gammaast,R)\dalpha+C(\Gammaast)\varepsilon\right) \|\boldsymbol{u}-\boldsymbol{w}
	\|_{X_{R,\delta}}.\label{EquationContractionEstimate9}
	\end{align}
\end{lemma}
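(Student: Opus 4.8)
The plan is to prove the listed estimates in the order given, deriving \eqref{EquationContractionEstimate9} at the end simply by adding \eqref{EquationContractionEstimate3}--\eqref{EquationContractionEstimate8}; all the individual estimates rest on one mechanism, and only the two angle conditions need a separate, fully non-linear treatment. The starting observation is that every operator entering $\mathfrak{f}$ and entering $\mathfrak{b}^i$ for $i\in\{1,4,5,6\}$ is quasi-linear: written in local coordinates and expanded with Lemma~\ref{LemmaLinearisierungSDFTJErstesHilfslemma}, it equals $\sum_{\beta}a_{\beta}(\boldsymbol{\rho})\,\partial^{\beta}\boldsymbol{\rho}+r(\boldsymbol{\rho})$, where $\partial^{\beta}$ ranges over the top-order derivatives ($|\beta|=4$ in the bulk, of orders $0,2,3$ on $\Sigmaast$ for $\mathfrak{b}^{1},\mathfrak{b}^{4},\mathfrak{b}^{5/6}$), the $a_{\beta}$ are smooth functions of $\boldsymbol{\rho}$, its lower-order derivatives and $\boldsymbol{\mu}(\boldsymbol{\rho})$, and $r$ collects the genuinely lower-order terms; by construction the linearizations $\mathcal{A}^{i}$, $\mathcal{B}^{i}$ are obtained by freezing the $a_{\beta}$ at $\boldsymbol{\rho}=0$, so that $\mathfrak{f}(\boldsymbol{\rho})$ resp.\ $\mathfrak{b}^{i}(\boldsymbol{\rho})$ has the schematic form $\sum_{\beta}\bigl(a_{\beta}(\boldsymbol{\rho})-a_{\beta}(0)\bigr)\partial^{\beta}\boldsymbol{\rho}+\bigl(r(\boldsymbol{\rho})-(\text{linearization of }r)\bigr)$. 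The non-local $\boldsymbol{\mu}$-terms are of lower order by part i.) of Lemma~\ref{LemmaLipschitzcontinuitiesTJ} and enter $r$; they are handled exactly like the local lower-order terms.

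For $\boldsymbol{u},\boldsymbol{w}\in X_{R,\delta}^{\boldsymbol{\rho}_0}$ one then uses the bilinear splitting $a_{\beta}(\boldsymbol{u})\partial^{\beta}\boldsymbol{u}-a_{\beta}(\boldsymbol{w})\partial^{\beta}\boldsymbol{w}=\bigl(a_{\beta}(\boldsymbol{u})-a_{\beta}(0)\bigr)\partial^{\beta}(\boldsymbol{u}-\boldsymbol{w})+\bigl(a_{\beta}(\boldsymbol{u})-a_{\beta}(\boldsymbol{w})\bigr)\partial^{\beta}\boldsymbol{w}$ and the analogous splitting of $r(\boldsymbol{u})-r(\boldsymbol{w})$. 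Three ingredients close the estimate. First, since $\boldsymbol{u}(0)=\boldsymbol{w}(0)=\boldsymbol{\rho}_0$, all the differences $\boldsymbol{u}-\boldsymbol{w}$, $a_{\beta}(\boldsymbol{u})-a_{\beta}(\boldsymbol{w})$, $\partial^{\beta}(\boldsymbol{u}-\boldsymbol{w})$, $r(\boldsymbol{u})-r(\boldsymbol{w})$ vanish at $t=0$, so Lemma~\ref{LemmaContractivityLowerOrderTerms} yields a factor $\dalpha$ whenever they are estimated one order below their natural norm. Second, by Lemma~\ref{LemmaLipschitzcontinuitiesTJ} and the chain rule the maps $\boldsymbol{\rho}\mapsto a_{\beta}(\boldsymbol{\rho})$, $N_{\boldsymbol{\rho}}$, $\nu_{\boldsymbol{\rho}}$, $g^{jk}_{\boldsymbol{\rho}}$ are Lipschitz into $C^{3+\alpha,\frac{3+\alpha}{4}}$ with constants depending only on $\Gammaast$ (and on $R$ once the angle conditions are involved), and the coefficient $a_{\beta}(\boldsymbol{u})-a_{\beta}(0)$ is small: splitting $\boldsymbol{u}=\boldsymbol{\rho}_0+(\boldsymbol{u}-\boldsymbol{\rho}_0)$, using $\|\boldsymbol{\rho}_0\|_{C^{4+\alpha}_{TJ}(\Gammaast)}\le\varepsilon$ and Lemma~\ref{LemmaContractivityLowerOrderTerms} on $\boldsymbol{u}-\boldsymbol{\rho}_0$ gives $\|a_{\beta}(\boldsymbol{u})-a_{\beta}(0)\|\le C(\varepsilon+R\dalpha)$ in the relevant lower-order norms. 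Third, the product estimates of Lemma~\ref{LemmaProductEstimatesParabolichHspace} combine these factor bounds into the bound for each product, after passing the $\partial^{\beta}$-terms to $\Sigmaast$ in the boundary cases via Remark~\ref{RemarkTracesParabolicHspaces}. This gives \eqref{EquationContractionEstimate1}--\eqref{EquationContractionEstimate3} and \eqref{EquationContractionEstimate6}--\eqref{EquationContractionEstimate8}; for \eqref{EquationContractionEstimate1} one first notes that in $\partial_t\boldsymbol{u}-V_{\boldsymbol{u}}$ the term $\partial_t\boldsymbol{u}$ cancels against $\partial_t\boldsymbol{u}\langle N_{\ast},N_{\boldsymbol{u}}\rangle$, leaving only $\partial_t\boldsymbol{u}\bigl(1-\langle N_{\ast},N_{\boldsymbol{u}}\rangle\bigr)$ and a $\boldsymbol{\mu}$-term, both of the small-coefficient type; and $\mathfrak{b}^{1}\equiv 0$ because $\mathcal{B}^{1}$ and $G^{1}$ are both the map $\boldsymbol{\rho}\mapsto\gamma^1\rho^1+\gamma^2\rho^2+\gamma^3\rho^3$.

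The main obstacle is $\mathfrak{b}^{2}$ and $\mathfrak{b}^{3}$, which are genuinely non-linear because $\langle N^i_{\boldsymbol{\rho}},N^j_{\boldsymbol{\rho}}\rangle$ depends non-linearly on $\nabla\boldsymbol{\rho}$, so the quasi-linear splitting is unavailable. Here the plan is to use the representation of Lemma~\ref{LemmaRepresentationN12}, $\langle N^1_{\boldsymbol{\rho}},N^2_{\boldsymbol{\rho}}\rangle=N^{12}(x,t,\widehat{\rho}^1,\widehat{\rho}^2,\nabla\widehat{\rho}^1,\nabla\widehat{\rho}^2)$ with $\|\partial N^{12}\|_{C^{3+\alpha,\frac{3+\alpha}{4}}}\le C(\Gammaast,R)$, and to write the difference by the fundamental theorem of calculus: with $\boldsymbol{\xi}_s=s\boldsymbol{u}+(1-s)\boldsymbol{w}$,
\[
\langle N^1_{\boldsymbol{u}},N^2_{\boldsymbol{u}}\rangle-\langle N^1_{\boldsymbol{w}},N^2_{\boldsymbol{w}}\rangle
=\sum_{i=1,2}\int_0^1\Bigl(\partial_{[\rho^i]}N^{12}(\boldsymbol{\xi}_s)\,(\widehat{u}^i-\widehat{w}^i)+\partial_{[\partial\rho^i]}N^{12}(\boldsymbol{\xi}_s)\cdot\nabla(\widehat{u}^i-\widehat{w}^i)\Bigr)\,ds,
\]
and then to subtract $\mathcal{B}^{2}(\boldsymbol{u}-\boldsymbol{w})$, which has the same structure with the coefficients evaluated at $\boldsymbol{\rho}=0$. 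One ends up with products of the coefficient differences $\int_0^1\bigl(\partial N^{12}(\boldsymbol{\xi}_s)-\partial N^{12}(0)\bigr)ds$ --- of the small-coefficient type, since $\boldsymbol{\xi}_s(0)=\boldsymbol{\rho}_0$ is $\varepsilon$-small and $\boldsymbol{\xi}_s$ varies by $O(R\dalpha)$ in $t$ --- with first-order traces of $\boldsymbol{u}-\boldsymbol{w}$, which are controlled in $C^{3+\alpha,\frac{3+\alpha}{4}}(\Sigmaastdelta)$ by $\|\boldsymbol{u}-\boldsymbol{w}\|_{X_{R,\delta}}$; the product estimate then gives \eqref{EquationContractionEstimate4}--\eqref{EquationContractionEstimate5}, the constants depending on $R$ precisely through \eqref{EquationBoundPartialDerivativesN1N2inC3+alpha}. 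Finally \eqref{EquationContractionEstimate9} follows by summing \eqref{EquationContractionEstimate3}--\eqref{EquationContractionEstimate8}. Beyond the angle conditions, the remaining difficulty is purely bookkeeping: for each of the many terms one must identify the parabolic H\"older space of every factor so that Lemma~\ref{LemmaProductEstimatesParabolichHspace} applies and so that either the $\varepsilon$-smallness of $a_{\beta}(\boldsymbol{u})-a_{\beta}(0)$ or the $\dalpha$-smallness coming from a difference that vanishes at $t=0$ ends up multiplying $\|\boldsymbol{u}-\boldsymbol{w}\|_{X_{R,\delta}}$ rather than being wasted.
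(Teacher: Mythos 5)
Your proposal is correct and follows essentially the same route as the paper: the quasi-linear add-and-subtract splitting of coefficients against the reference values for the bulk terms and the quasi-linear boundary operators (combined with Lemmata \ref{LemmaProductEstimatesParabolichHspace}, \ref{LemmaContractivityLowerOrderTerms} and \ref{LemmaLipschitzcontinuitiesTJ}), the reduction of the velocity term to $\partial_t\boldsymbol{u}(1-N_{\boldsymbol{u}}\cdot N_{\ast})$ plus a tangential $\boldsymbol{\mu}$-term, the observation $\mathfrak{b}^1\equiv 0$, and the fully non-linear treatment of the angle conditions via Lemma \ref{LemmaRepresentationN12} with the coefficient differences split into a part vanishing at $t=0$ (giving $C(\Gammaast,R)\dalpha$) and a part depending only on $\brho_0$ (giving $C(\Gammaast)\varepsilon$ with $R$-independent constant). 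No essential difference from the paper's proof.
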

\begin{proof}
	For the first line we note that the term on the left-hand-side equals
	\begin{align*}
	\underbrace{\partial_t\boldsymbol{u}(1-N_{\boldsymbol{u}}\cdot N_{\ast})-\partial_t\boldsymbol{w}(1-N_{\boldsymbol{w}}\cdot N_{\ast})}_{=(I)}+\underbrace{\partial_t\boldsymbol{\mu}(\boldsymbol{w})\tau_{\ast}\cdot N_{\boldsymbol{w}}-\partial_t\boldsymbol{\mu}(\boldsymbol{u})\tau_{\ast}\cdot N_{\boldsymbol{u}}}_{=(II)}
	\end{align*}
	We will discuss the terms $(I)$ and $(II)$ separately. Firstly, we rewrite (I) as
	\begin{align*}
	\partial_t\boldsymbol{u}(1-N_{\boldsymbol{u}}\cdot N_{\ast})-\partial_t\boldsymbol{u}(1-N_{\boldsymbol{w}}\cdot N_{\ast})+\partial_t\boldsymbol{u}(1-N_{\boldsymbol{w}}\cdot N_{\ast})-\partial_t\boldsymbol{w}(1-N_{\boldsymbol{w}}\cdot N_{\ast})
	\end{align*}
	and observe using Lemma \ref{LemmaProductEstimatesParabolichHspace}, Lemma \ref{LemmaContractivityLowerOrderTerms}, Lemma \ref{LemmaLipschitzcontinuitiesTJ}ii.) and (\ref{EquationEstimateforelementsofXRdelta}) that
	\begin{align*}
	\left\|\partial_t\boldsymbol{u}\left(N_{\boldsymbol{w}}-N_{\bu})\right)\cdot N_{\ast}\right\|_{Y_{\delta}}&\le \left\|\partial_t\boldsymbol{u}\right\|_{Y_{\delta}}\left\|\left(N_{\bw}-N_{\bu}\right)\cdot N_{\ast}\right\|_{Y_{\delta}}\\
	&\le C(\Gamma_{\ast})\|\boldsymbol{u}\|_{X_{R,\delta}}\dalpha\|\left(N_{\bw}-N_{\bu}\right)\cdot N_{\ast}\|_{C^{3+\alpha, \frac{3+\alpha}{4}}_{TJ}(\Gammaastdelta)}\\
	&\le C(\Gammaast)(R+\varepsilon)\dalpha\|\boldsymbol{u}-\boldsymbol{w}\|_{X_{R,\delta}}\\
	&\le C(\Gammaast)2R\dalpha\|\boldsymbol{u}-\boldsymbol{w}\|_{X_{R,\delta}}.\\
	\left\|\left(\partial_t\boldsymbol{u}-\partial_t\boldsymbol{w}\right)(1-N_{\bw}\cdot N_{\ast})\right\|_{Y_{\delta}}&\le \|\partial_t\boldsymbol{u}-\partial_t\boldsymbol{w}\|_{Y_{\delta}}\|1-N_{\bw}\cdot N_{\ast}\|_{Y_{\delta}}\\
	&\le \|\boldsymbol{u}-\boldsymbol{w}\|_{X_{R,\delta}}C(\Gammaast)\dalpha\|1-N_{\bw}\cdot N_{\ast}\|_{C^{3+\alpha,\frac{3+\alpha}{4}}_{TJ}(\Gammaastdelta)}\\
	&\le C(\Gammaast)\dalpha\|\boldsymbol{w}\|_{X_{R,\delta}}\|\boldsymbol{u}-\boldsymbol{w}\|_{X_{R,\delta}}\\
	&\le C(\Gammaast)(R+\varepsilon)\dalpha\|\boldsymbol{u}-\boldsymbol{w}\|_{X_{R,\delta}}\\
	&\le C(\Gammaast)2R\dalpha\|\boldsymbol{u}-\boldsymbol{w}\|_{X_{R,\delta}}.
	\end{align*}
	In total this implies
	\begin{align}\label{EquationProofofContractionEstimatesTJ1}
	\|(I)\|_{Y_{\delta,R}}\le C(\Gammaast)R\dalpha\|\boldsymbol{u}-\boldsymbol{w}\|_{X_{R,\delta}}.
	\end{align}
	Next, we write the term $(II)$ as
	\begin{align*}
	\partial_t\boldsymbol{\mu}(\boldsymbol{w})\tau_{\ast}\cdot N_{\bw}-\partial_t\boldsymbol{\mu}(\boldsymbol{w})\tau_{\ast}\cdot N_{\bu}+\partial_t\boldsymbol{\mu}(\boldsymbol{w})\tau_{\ast}\cdot N_{\bu}-\partial_t\bmu(\boldsymbol{u})\tau_{\ast}\cdot N_{\bu},
	\end{align*}
	and derive using Lemma \ref{LemmaLipschitzcontinuitiesTJ}i.) and ii.), Lemma \ref{LemmaProductEstimatesParabolichHspace}, Lemma \ref{LemmaContractivityLowerOrderTerms} and (\ref{EquationEstimateforelementsofXRdelta}) that
	\begin{align*}
	\|\partial_t\boldsymbol{\mu}(\boldsymbol{w})(N_{\bw}-N_{\bu})\cdot\tau_{\ast}\|_{Y_{\delta}}&\le\|\partial_t\boldsymbol{\mu}(\boldsymbol{w})\|_{Y_{\delta}} \|(N_{\bw}-N_{\bu})\cdot\tau_{\ast}\|_{Y_{\delta}}\\
	&\le \|\boldsymbol{\mu}(\boldsymbol{w})\|_{X_{R,\delta}}C(\Gammaast)\dalpha\|(N_{\bw}-N_{\bu})\cdot\tau_{\ast}\|_{C^{3+\alpha, \frac{3+\alpha}{4}}_{TJ}(\Gammaastdelta)}\\
	&\le C(\Gammaast)\|\boldsymbol{w}\|_{X_{R,\delta}}\dalpha\|\boldsymbol{u}-\boldsymbol{w}\|_{X_{R,\delta}}\\
	&\le C(\Gammaast)(R+\varepsilon)\dalpha\|\boldsymbol{u}-\boldsymbol{w}\|_{X_{R,\delta}}\\
	&\le C(\Gammaast)2R\dalpha\|\boldsymbol{u}-\boldsymbol{w}\|_{X_{R,\delta}},\\
	\|(\partial_t\boldsymbol{\mu}(\boldsymbol{w})-\partial_t\boldsymbol{\mu}(\boldsymbol{u}))N_{\bu}\cdot\tau_{\ast}\|_{Y_{\delta}}&\le \|\partial_t\boldsymbol{\mu}(\boldsymbol{u}-\boldsymbol{w})\|_{Y_{\delta}}\|N_{\bu}\cdot\tau_{\ast}\|_{Y_{\delta}}\\
	&\le C(\Gammaast)\|\boldsymbol{\mu}(\boldsymbol{u}-\boldsymbol{w})\|_{X_{R,\delta}}\left(\varepsilon+\dalpha\|N_{\bu}\cdot\tau_{\ast}\|_{C^{3+\alpha, \frac{3+\alpha}{4}}_{TJ}(\Gammaastdelta)}\right)\\
	&\le C(\Gammaast)\|\boldsymbol{u}-\boldsymbol{w}\|_{X_{R,\delta}}\left(\varepsilon+\dalpha\|\boldsymbol{u}\|_{X_{R,\delta}}\right)\\
	&\le C(\Gammaast)\|\boldsymbol{u}-\boldsymbol{w}\|_{X_{R,\delta}}\left(\varepsilon+\dalpha(R+\varepsilon)\right)\\
	&\le C(\Gammaast)\|\boldsymbol{u}-\boldsymbol{w}\|_{X_{R,\delta}}\left(\varepsilon+2\dalpha R\right).
	\end{align*}
	From this we conclude
	\begin{align}\label{EquationProofofContractionEstimatesTJ2}
	\|(II)\|_{Y_{\delta}}\le C(\Gammaast)(\varepsilon+\dalpha R)\|\boldsymbol{u}-\boldsymbol{w}\|_{X_{R,\delta}}.
	\end{align}
	The estimates \eqref{EquationProofofContractionEstimatesTJ1} and \eqref{EquationProofofContractionEstimatesTJ2} together imply \eqref{EquationContractionEstimate1}.\\
	For \eqref{EquationContractionEstimate2} we note that the highest order terms  on the left-hand side are given in local coordinates by
	\begin{align*}
	&\left(g^{jk}_{\bu}g^{lm}_{\bu}(N_{\bu}\cdot N_{\ast})\right)\partial_{jklm}\boldsymbol{u}-\left(g^{jk}_{\bw}g^{lm}_{\bw}(N_{\bw}\cdot N_{\ast})\right)\partial_{jklm}\boldsymbol{w}-g^{jk}_{\ast}g^{lm}_{\ast}\partial_{jklm}(\boldsymbol{u}-\boldsymbol{w})\\
	+&\left(g^{jk}_{\bu}g^{lm}_{\bu}(N_{\bu}\cdot \tau_{\ast})\right)\partial_{jklm}\boldsymbol{\mu}(\boldsymbol{u})-\left(g^{jk}_{\bw}g^{lm}_{\bw}(N_{\bw}\cdot \tau_{\ast})\right)\partial_{jklm}\boldsymbol{\mu}(\boldsymbol{w}).
	\end{align*}
	We abbreviate the local terms in the first line by $(I)$ and the non-local terms in the second line by $(II)$. First, we rewrite $(I)$ as
	\begin{align*}
	&-g_{\ast}^{jk}g_{\ast}^{lm}\partial_{jklm}(\boldsymbol{u}-\boldsymbol{w})+g^{jk}_{\bu}g^{lm}_{\bu}\left(N_{\bu}\cdot N_{\ast}\right)\partial_{jklm}(\boldsymbol{u}-\boldsymbol{w})
	-g^{jk}_{\bu}g^{lm}_{\bu}\left(N_{\bu}\cdot N_{\ast}\right)\partial_{jklm}(\boldsymbol{u}-\boldsymbol{w})\\
	&+g^{jk}_{\bu}g^{lm}_{\bu}\left(N_{\bu}\cdot N_{\ast}\right)\partial_{jklm}\boldsymbol{u}-g^{jk}_{\bw}g^{lm}_{\bw}\left(N_{\bw}\cdot N_{\ast}\right)\partial_{jklm}\boldsymbol{w}\\
	&=\underbrace{\left(-g_{\ast}^{jk}g_{\ast}^{lm}+g^{jk}_{\bu}g^{lm}_{\bu}(N_{\bu}\cdot N_{\ast})\right)\partial_{jklm}(\boldsymbol{u}-\boldsymbol{w})}_{(A)}+\underbrace{\left(g^{jk}_{\bu}g^{lm}_{\bu}\left(N_{\bu}\cdot N_{\ast}\right)-g^{jk}_{\bw}g^{lm}_{\bw}\left(N_{\bw}\cdot N_{\ast}\right)\right)\partial_{jklm}\boldsymbol{w}}_{(B)}.
	\end{align*}
	Now, we observe that due to Lemma \ref{LemmaLipschitzcontinuitiesTJ}i.) the function 
	\begin{align*} X_{R,\delta}^{\varepsilon}&\to C^{3+\alpha, \frac{3+\alpha}{4}}(\Gammaastdelta),\\
	\brho&\mapsto g^{jk}_{\brho}g^{lm}_{\brho}(N_{\brho}\cdot N_{\ast}),
	\end{align*} is Lipschitz continuous and the evaluation at $\brho\equiv 0$ equals $g_{\ast}^{jk}g_{\ast}^{lm}$. Thus, we get
	\begin{align*}
	\|(A)\|_{Y_{\delta}}&\le C(\Gammaast)\dalpha\|g^{jk}_{\bu}g^{lm}_{\bu}(N_{\bu}\cdot N_{\ast})-g_{\ast}^{jk}g_{\ast}^{lm}\|_{C^{3+\alpha, \frac{3+\alpha}{4}}_{TJ}(\Gammaastdelta)}\|\boldsymbol{u}-\boldsymbol{w}\|_{X_{R,\delta}}\\
	&\le C(\Gammaast)\dalpha\|\boldsymbol{u}\|_{X_{R,\delta}}\|\bu-\bw\|_{X_{R,\delta}}\\
	&\le C(\Gammaast)\dalpha(R+\varepsilon)\|\bu-\bw\|_{X_{R,\delta}}\\
	&\le C(\Gammaast)\dalpha 2R\|\bu-\bw\|_{X_{R,\delta}},\\
	\|(B)\|_{Y_{\delta}}&\le C(\Gammaast)\dalpha\|g^{jk}_{\bu}g^{lm}_{\bu}\left(N_{\bu}\cdot N_{\ast}\right)-g^{jk}_{\bw}g^{lm}_{\bw}\left(N_{\bw}\cdot N_{\ast}\right)\|_{C^{3+\alpha, \frac{3+\alpha}{3}}_{TJ}(\Gammaastdelta)}\|\boldsymbol{w}\|_{X_{R,\delta}}\\
	&\le C(\Gammaast)\dalpha\|\boldsymbol{u}-\boldsymbol{w}\|_{X_{R,\delta}}2R.
	\end{align*}
	Together this implies
	\begin{align}\label{EquationContractionEstimatesTJ3}
	\|(I)\|_{Y_{\delta}}\le C(\Gammaast)\dalpha R\|\boldsymbol{u}-\boldsymbol{w}\|_{X_{R,\delta}}.
	\end{align}
	Now, we write the term $(II)$ as
	\begin{align*}
	&\underbrace{g^{jk}_{\bu}g^{lm}_{\bu}\left(N_{\bu}\cdot\tau_{\ast}\right)\partial_{jklm}\boldsymbol{\mu}(\boldsymbol{u})-g^{jk}_{\bu}g^{lm}_{\bu}\left(N_{\bu}\cdot\tau_{\ast}\right)\partial_{jklm}\boldsymbol{\mu}(\boldsymbol{w})}_{(A)}\\
	+&\underbrace{g^{jk}_{\bu}g^{lm}_{\bu}\left(N_{\bu}\cdot\tau_{\ast}\right)\partial_{jklm}\boldsymbol{\mu}(\boldsymbol{w})-g^{jk}_{\bw}g^{lm}_{\bw}\left(N_{\bw}\cdot\tau_{\ast}\right)\partial_{jklm}\boldsymbol{\mu}(\boldsymbol{w})}_{(B)}.
	\end{align*}
	Using Lemma \ref{LemmaLipschitzcontinuitiesTJ}i.) we see that the function
	\begin{align*}
	X_{R,\delta}&\to C^{3+\alpha,\frac{3+\alpha}{4}}_{TJ}(\Gammaastdelta)\\
	\brho&\mapsto g^{jk}_{\brho}g^{lm}_{\brho}\left(N_{\brho}\cdot\tau_{\ast}\right),
	\end{align*} 
	is Lipschitz continuous and the evaluation at $\brho\equiv 0$ equals $0$ . Thus, we can argue with similar arguments as for the term $(I)$ and so derive
	\begin{align}\label{EquationContractionEstimatesTJ4}
	\|(II)\|_{Y_{\delta}}\le C(\Gammaast)(\varepsilon+R\dalpha)\|\boldsymbol{u}-\boldsymbol{w}\|_{X_{R,\delta}}. 
	\end{align}
	Combining the estimates (\ref{EquationContractionEstimatesTJ3}) and (\ref{EquationContractionEstimatesTJ4}) we get (\ref{EquationContractionEstimate2}) and then together with $(\ref{EquationContractionEstimate1})$ we conclude $(\ref{EquationContractionEstimate3})$.\\
	Next we have to deal with the boundary operator $\mathfrak{b}$. We will only discuss the analysis of $\mathfrak{b}^2$ resp. $\mathfrak{b}^3$. The others therms of $\mathfrak{b}$ have all a quasi-linear structure and so similar arguments as before can be applied. In contrary, this is not possible for $\mathfrak{b}^2$ resp. $\mathfrak{b}^3$ as they are fully non-linear. Instead, we will use ideas from \cite[Chapter 8]{lunardi2012analytic}. Using the function $N^{12}$ from Lemma \ref{LemmaRepresentationN12} we write for any $\boldsymbol{u},\boldsymbol{w}\in X_{R,\delta}^{\brho_0}$ 
	\begin{align*}
	\mathfrak{b}^2(\boldsymbol{u})-\mathfrak{b}^2(\boldsymbol{w})&=\sum_{i=1}^2\Theta^i(u^i-w^i)+\sum_{i=1}^2\sum_{j=1}^n\Theta^i_j\partial_j(u^i-w^i)\\
	&+\sum_{i=1}^2\Theta^i_0(u^i-w^i)+\sum_{i=1}^2\sum_{j=1}^n\Theta_{j;0}^i\partial_j(u^i-w^i),
	\end{align*}
	where we used the abbreviations
	\begin{align*}
	\Theta^i&:=\int_0^1\partial_{[\rho^i]}N^{12}(s\boldsymbol{w}+(1-s)\boldsymbol{u})-\partial_{[\rho^i]}N^{12}(\boldsymbol{\rho_0})ds,\\
	\Theta^i_j&:=\int_0^1\partial_{[\partial_j\rho^i]}N^{12}(s\boldsymbol{w}+(1-s)\boldsymbol{u})-\partial_{[\partial_j\rho^i]}N^{12}(\boldsymbol{\rho_0})ds,\\
	\Theta^i_0&:=\int_0^1\partial_{[\rho^i]}N^{12}(s\boldsymbol{\rho}_0)-\partial_{[\rho^i]}N^{12}(0)ds,\\
	\Theta^i_{j;0}&:=\int_0^1\partial_{[\partial_j\rho^i]}N^{12}(s\boldsymbol{\rho_0})-\partial_{[\partial_j\rho^i]}N^{12}(0)ds.
	\end{align*}
	Hereby, we used that as $\mathcal{B}^2$ is the pointwise linearization of $N^1_{\brho}\cdot N^2_{\brho}$ we can also write it in terms of $\partial N^{12}$. Furthermore, all the arising functions $\Theta$ are also in $C^{3+\alpha, \frac{3+\alpha}{4}}$ due to the theory of parameter integrals. Additionally, their norms are also bound by a constant $C(\Gammaast,R)$. As $$s\boldsymbol{w}+(1-s)\boldsymbol{u}\big|_{t=0}=\boldsymbol{\rho}_0$$ we get $\Theta\big|_{t=0}=0$ for $\Theta\in \{\Theta^i, \Theta^i_j|i=1,2, j=1,...,n\}$ and therefore we conclude 
	\begin{align*}
	\|\Theta\|_{C^{3,0}}\le\delta^{\frac{\alpha}{4}}\langle \Theta\rangle_{t, \frac{\alpha}{4}}\le \delta^{\frac{\alpha}{4}} C(\Gammaast,R),
	\end{align*}
	as all derivatives of $\Theta$ are at least in $C^{0,\frac{\alpha}{4}}$. Additionally, all $\Theta$ inherit the bound (\ref{EquationBoundPartialDerivativesN1N2inC3+alpha}) as it holds uniformly in $s$. From this we deduce
	\begin{align*}
	\|\Theta D(u^i-w^i)\|_{C^{3+\alpha, \frac{3+\alpha}{4}}}&\le \|\Theta^i\|_{C^{3+\alpha,\frac{3+\alpha}{4}}}\|D(u^i-w^i)\|_{C^{3,0}}+\|\Theta^i\|_{C^{3,0}}\|D(u^i-w^i)\|_{C^{3+\alpha, \frac{3+\alpha}{4}}}\\
	&\le C(\Gammaast,R)\|D(u^i-w^i)\|_{C^{3,0}}+C(\Gammaast,R)\delta^{\frac{\alpha}{4}}\|D(u^i-w^i)\|_{C^{3+\alpha,\frac{3+\alpha}{4}}}\\
	&\le C(\Gammaast,R)\delta^{\frac{\alpha}{4}}\|\boldsymbol{u}-\boldsymbol{w}\|_{C^{4+\alpha, 1+\frac{\alpha}{4}}}, 
	\end{align*}
	where $D$ denotes the to $\Theta$ corresponding differential operator, which is the identity for $\Theta^i$ and $\partial_j$ for $\Theta^i_j$. For the $\Theta^i_0$ and $\Theta^i_{j,0}$ we can use that due to the work in the proof of Lemma \ref{LemmaRepresentationN12} we have Lipschitz continuity\footnote{Note that the Lipschitz constant here is independent of $R$ as we need the $\Theta_0$ only as a function in the initial data!} of the $\partial N^{12}$ as maps $C^{4+\alpha, 1+\frac{\alpha}{4}}\to C^{3+\alpha, \frac{3+\alpha}{4}}$ and this implies then that
	\begin{align*}
	\|\Theta_0\|_{C^{3+\alpha, \frac{3+\alpha}{4}}}\le C(\Gammaast)\|\boldsymbol{\rho}_0\|_{C^{4+\alpha}}\le C(\Gammaast)\varepsilon,
	\end{align*}
	for $\Theta_0\in \{\Theta^i_0,\Theta^i_{j,0}|i=1,2,j=1,...,n\}$ and consequently
	\begin{align*}
	\|\Theta_0D(u^i-w^i)\|_{C^{3+\alpha, \frac{3+\alpha}{4}}}\le C(\Gammaast)\varepsilon\|\boldsymbol{u}-\boldsymbol{w}\|_{X_{R,\delta}},
	\end{align*}
	where $D$ denotes again the differential operator matching to the choice of $\Theta_0$. In total we deduce
	\begin{align*}
	\|\mathfrak{b}^2(\boldsymbol{u})-\mathfrak{b}^2(\boldsymbol{w})\|_{Y_{\delta}}\le (C(\Gammaast, R)\delta^{\frac{\alpha}{4}}+C(\Gammaast)\varepsilon)\|\boldsymbol{u}-\boldsymbol{w}\|_{X_{R,\delta}}.
	\end{align*}
	For $\mathfrak{b}^3$ we can argue analogously and thus we conclude \eqref{EquationContractionEstimate4} and \eqref{EquationContractionEstimate5}. This finishes the analysis of the contraction estimates.
\end{proof}
From this we may now conclude that $\Lambda$ is a contraction mapping for suitable $\varepsilon$ and $\delta$.
\begin{corollary}[Contraction property of $\Lambda$]\label{CorollaryContractivityTripleJunction}\ \\
	There is an $\varepsilon_0<\min(1,\varepsilon_W)$ with the following property: for any $R>1$ and $\varepsilon<\varepsilon_0$ there is a $\delta(R,\varepsilon)>0$ such that 
	\begin{align*}
	\Lambda: X_{R,\delta}^{\brho_0}\to C^{4+\alpha,1+\frac{\alpha}{4}}_{TJ}(\Gammaastdelta)
	\end{align*}
	is a $\frac{1}{2}$-contraction.
\end{corollary}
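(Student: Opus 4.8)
The plan is to reduce the contraction property of $\Lambda=L\circ S$ to the contraction estimate \eqref{EquationContractionEstimate9} for $S$ together with the boundedness of the linear solution operator $L$ from Theorem~\ref{theoremShorttimeexistenceLinearisedSDFTJGeneralInitialdata}. First I would note that, since the linear problem \eqref{EquationLinearisedSurfaceDiffusionTripleJunctionAbstractFormulaton} depends affinely on its data, the operator $L\colon Y_\delta\to X_{R,\delta}^{\brho_0}$ splits as $L=L_{\mathrm{lin}}+u_{\brho_0}$, where $u_{\brho_0}$ is the fixed solution for vanishing inhomogeneities with initial datum $\brho_0$ and $L_{\mathrm{lin}}$ is linear. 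Hence for $\bu,\bw\in X_{R,\delta}^{\brho_0}$ one has $\Lambda(\bu)-\Lambda(\bw)=L_{\mathrm{lin}}\big(S(\bu)-S(\bw)\big)$, and here Lemma~\ref{LemmaWelldefinednessofLambdaTripleJunction}(ii) is what guarantees that $S(\bu),S(\bw)$ lie in the domain of $L$, so that their difference satisfies the homogeneous version of the compatibility conditions \eqref{EquationCompatibilityConditionsLinearisedTJ} and $L_{\mathrm{lin}}$ is indeed applicable. The energy estimate \eqref{EquationEnergieabschatzungenLineareGleichungVollTJ}, applied with zero initial datum so that the $\|\bu_0\|$-term drops, then provides a constant $C_L>0$ with
\begin{align*}
\|\Lambda(\bu)-\Lambda(\bw)\|_{X_{R,\delta}}\le C_L\,\|S(\bu)-S(\bw)\|_{Y_\delta}.
\end{align*}

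Combining this with \eqref{EquationContractionEstimate9} yields, for some $\bar\alpha\in(0,1)$,
\begin{align*}
\|\Lambda(\bu)-\Lambda(\bw)\|_{X_{R,\delta}}\le C_L\big(C(\Gammaast,R)\dalpha+C(\Gammaast)\varepsilon\big)\,\|\bu-\bw\|_{X_{R,\delta}}.
\end{align*}
The decisive point is that the two contributions on the right are tuned by different parameters, matching the quantifier structure of the statement: I would first fix $\varepsilon_0>0$ so small that $\varepsilon_0<\min(1,\varepsilon_W)$ and $C_L\,C(\Gammaast)\,\varepsilon_0\le\tfrac14$; since $C_L$ and $C(\Gammaast)$ depend only on the reference cluster $\Gammaast$, this $\varepsilon_0$ is independent of $R$. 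Then, given any $R>1$ and $\varepsilon<\varepsilon_0$, I would choose $\delta=\delta(R,\varepsilon)>0$ small enough that $\delta<1$, $\delta<\delta_W(\varepsilon,R)$, $\delta$ lies below the existence time of Theorem~\ref{theoremShorttimeexistenceLinearisedSDFTJGeneralInitialdata}, and $C_L\,C(\Gammaast,R)\,\dalpha\le\tfrac14$; the last condition is achievable because $C(\Gammaast,R)$ does not depend on $\delta$ and $\bar\alpha>0$. With these choices $\Lambda$ is well-defined on $X_{R,\delta}^{\brho_0}$ by Lemma~\ref{LemmaWelldefinednessofLambdaTripleJunction} and
\begin{align*}
\|\Lambda(\bu)-\Lambda(\bw)\|_{X_{R,\delta}}\le\tfrac12\,\|\bu-\bw\|_{X_{R,\delta}},
\end{align*}
which is the claimed $\tfrac12$-contraction.

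The only genuinely delicate point in this argument is ensuring that the operator norm $C_L$ of $L_{\mathrm{lin}}$ can be taken uniform in $\delta$ as $\delta\to0$; this is built into the form of \eqref{EquationEnergieabschatzungenLineareGleichungVollTJ}, whose constant is uniform for parabolic cylinders of height bounded by the existence time. Everything else is bookkeeping: collecting the finitely many smallness requirements on $\varepsilon$ and $\delta$ coming from Lemma~\ref{LemmaWelldefinednessofLambdaTripleJunction}, Lemma~\ref{LemmaContractionEstimatesTripleJunction} and the linear theory, and keeping the $R$-dependent term (absorbed through $\dalpha$) strictly separated from the $R$-independent one (absorbed through $\varepsilon$).
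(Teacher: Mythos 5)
Your proposal is correct and follows essentially the same route as the paper: one observes that $\Lambda(\bu)-\Lambda(\bw)$ solves the linear problem \eqref{EquationLinearisedSurfaceDiffusionTripleJunctionAbstractFormulaton} with inhomogeneities $S(\bu)-S(\bw)$ and zero initial data, applies the energy estimate \eqref{EquationEnergieabschatzungenLineareGleichungVollTJ} together with the contraction estimates of Lemma \ref{LemmaContractionEstimatesTripleJunction}, and then chooses first $\varepsilon_0$ (independent of $R$) and then $\delta(R,\varepsilon)$ so that each contribution is at most $\tfrac14$. Your explicit affine decomposition of $L$ and the remark on the $\delta$-uniformity of the linear constant only make explicit what the paper leaves implicit.
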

\begin{proof}
	Let $\boldsymbol{v},\bw\in X_{R,\delta}^{\brho_0}$. We observe that $\Lambda(\boldsymbol{v})-\Lambda(\bw)$ solves (\ref{EquationLinearisedSurfaceDiffusionTripleJunctionAbstractFormulaton}) with 
	\begin{align*}
	(\mathfrak{f},\mathfrak{b})=S(\boldsymbol{v}-\bw),\quad \boldsymbol{u}_0\equiv 0.
	\end{align*}
	Suppose now that $\varepsilon,\delta<1$. Then, we can apply the energy estimate (\ref{EquationEnergieabschatzungenLineareGleichungVollTJ}) together with Lemma \ref{LemmaContractionEstimatesTripleJunction} to derive
	\begin{align}
	\|\Lambda(\boldsymbol{v})-\Lambda(\boldsymbol{w})\|_{X_{R,\delta}}\le \left(C_1(\Gammaast,R)\dalpha+C_2(\Gammaast)\varepsilon\right)\|\boldsymbol{v}-\bw\|_{X_{R,\delta}},
	\end{align}
	with suitable constants $C_1(\Gammaast,R)$ and $C_2(\Gammaast)$.
	Now choosing 
	\begin{align}
	\varepsilon\le \frac{1}{4_2C(\Gammaast)},\quad \delta\le \left(\frac{1}{4C_1(\Gammaast,R)}\right)^{\bar{\alpha}^{-1}},
	\end{align}
	we get the sought property for $\Lambda$.
\end{proof}
Finally, we need that $\Lambda$ is also a self-mapping, which we can guarantee as long as $R$ is sufficiently large.
\begin{lemma}[Self-mapping property of $\Lambda$]\label{LemmaSelfMappingofLambdaTripleJunction}\ \\
	For given $R>1$ and $\varepsilon<\varepsilon_0$ let $\delta(R,\varepsilon)$ be chosen as in Corollary \ref{CorollaryContractivityTripleJunction}. There is an $R_0>0$ such that for all $R>R_0$ the map $\Lambda$ is a self-mapping on $X_{R,\delta}^{\brho_0}$.
\end{lemma}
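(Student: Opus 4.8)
The plan is to prove the self-mapping property by a standard decomposition of $\Lambda(\brho)-\brho_0$ and then apply the energy estimate from Theorem \ref{theoremShorttimeexistenceLinearisedSDFTJGeneralInitialdata} together with the contraction estimates of Lemma \ref{LemmaContractionEstimatesTripleJunction}. Concretely, fix $R>1$ and $\varepsilon<\varepsilon_0$ with $\delta=\delta(R,\varepsilon)$ as in Corollary \ref{CorollaryContractivityTripleJunction}, and let $\brho\in X_{R,\delta}^{\brho_0}$. First I would write
\begin{align*}
\Lambda(\brho)-\brho_0=\big(\Lambda(\brho)-\Lambda(\brho_0)\big)+\big(\Lambda(\brho_0)-\brho_0\big),
\end{align*}
where on the right-hand side $\brho_0$ is understood as the constant-in-time element of $X_{R,\delta}^{\brho_0}$ (its extension with $\partial_t\brho_0\equiv 0$). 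The first summand is controlled by Corollary \ref{CorollaryContractivityTripleJunction}: since $\brho,\brho_0\in X_{R,\delta}^{\brho_0}$ we have $\|\Lambda(\brho)-\Lambda(\brho_0)\|_{X_{R,\delta}}\le\tfrac12\|\brho-\brho_0\|_{X_{R,\delta}}\le\tfrac{R}{2}$. So it remains to estimate the \emph{fixed} quantity $\|\Lambda(\brho_0)-\brho_0\|_{X_{R,\delta}}$ and to show it can be made $\le R/2$ by choosing $R$ large.

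For the second summand, note that $\Lambda(\brho_0)=L(S(\brho_0))$ solves the linear problem \eqref{EquationLinearisedSurfaceDiffusionTripleJunctionAbstractFormulaton} with data $(\mathfrak{f},\mathfrak{b})=S(\brho_0)$ and initial value $\brho_0$; on the other hand the constant extension of $\brho_0$ solves the same linear system but with a different, \emph{$\brho_0$-determined} right-hand side. Hence $\Lambda(\brho_0)-\brho_0$ solves \eqref{EquationLinearisedSurfaceDiffusionTripleJunctionAbstractFormulaton} with zero initial data and inhomogeneities $S(\brho_0)-(\text{residual of the constant extension})$, and the energy estimate \eqref{EquationEnergieabschatzungenLineareGleichungVollTJ} gives
\begin{align*}
\|\Lambda(\brho_0)-\brho_0\|_{X_{R,\delta}}\le C\,\|S(\brho_0)-S(\text{const }\brho_0)\|_{Y_\delta}+C\,\big(\text{lower order }\brho_0\text{-terms}\big).
\end{align*}
The point is that this right-hand side depends only on $\brho_0$ (with $\|\brho_0\|_{C^{4+\alpha}_{TJ}}\le\varepsilon$) and on $\delta$, but \emph{not} on $R$. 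Using the contractivity estimate \eqref{EquationContractivityofLowerOrderTerms} of Lemma \ref{LemmaContractivityLowerOrderTerms} for the terms in $S(\brho_0)$ that vanish at $t=0$ (these vanish because of the compatibility conditions verified in Lemma \ref{LemmaWelldefinednessofLambdaTripleJunction}ii.)), together with the product estimate Lemma \ref{LemmaProductEstimatesParabolichHspace}, one gets $\|\Lambda(\brho_0)-\brho_0\|_{X_{R,\delta}}\le C(\Gammaast,\varepsilon,\delta)$, a constant not depending on $R$.

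Combining, $\|\Lambda(\brho)-\brho_0\|_{X_{R,\delta}}\le\tfrac{R}{2}+C(\Gammaast,\varepsilon,\delta)$, so choosing $R_0:=2C(\Gammaast,\varepsilon,\delta)$ and any $R>R_0$ yields $\|\Lambda(\brho)-\brho_0\|_{X_{R,\delta}}\le R$; together with the facts that $\Lambda(\brho)\big|_{t=0}=\brho_0$ and that $\Lambda(\brho)$ satisfies the sum condition $\sum_i\gamma^i(\Lambda(\brho))^i=0$ on $\Sigmaastdelta$ (this is exactly the first boundary condition $\mathcal{B}^1_{HOT}$, which is built into the solution operator $L$), we conclude $\Lambda(\brho)\in X_{R,\delta}^{\brho_0}$. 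The only subtlety — and the step I expect to require a little care — is making precise the order of quantification, namely that $\delta$ was already fixed as $\delta(R,\varepsilon)$ in Corollary \ref{CorollaryContractivityTripleJunction}, so $C(\Gammaast,\varepsilon,\delta)$ is genuinely a constant once $R$ has been chosen to fix $\delta$; one should check that the dependence $\delta=\delta(R)$ does not reintroduce an $R$-dependence that grows faster than linearly (it does not, since enlarging $R$ only forces $\delta$ smaller, which can only decrease the bound $C(\Gammaast,\varepsilon,\delta)$ via the factors $\delta^{\bar\alpha}$).
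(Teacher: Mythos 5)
Your proposal is correct and takes essentially the same route as the paper: the triangle-inequality split through $\Lambda(\brho_0)$, the $\tfrac{1}{2}$-contraction for the first term, and the observation that $\Lambda(\brho_0)-\brho_0$ solves the linear problem with zero initial data and $\brho_0$-determined, $R$-independent inhomogeneities, followed by the linear energy estimate. The only difference is cosmetic: the paper computes the inhomogeneity explicitly as $\mathfrak{f}^i_0(\brho_0)=\Delta_{\brho_0}H^i_{\brho_0}$ with vanishing boundary data (using $\mathcal{G}(\brho_0)=0$ and the time-independence of the boundary data), which yields a bound $C'(\varepsilon)$ manifestly independent of $\delta$ and thus sidesteps the $\delta(R)$ quantifier subtlety that you resolve by monotonicity at the end.
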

\begin{proof}
	For any $\bu\in X_{R,\delta}^{\brho_0}$ we get
	\begin{align*}
	\|\Lambda(\boldsymbol{u})-\boldsymbol{\rho}_0\|_{C^{4+\alpha,\frac{1+\alpha}{4}}_{TJ}(\Gammaastdelta)}&\le \|\Lambda(\boldsymbol{u})-\Lambda(\boldsymbol{\rho}_0)\|_{C^{4+\alpha,\frac{1+\alpha}{4}}_{TJ}(\Gammaastdelta)}+\|\Lambda(\boldsymbol{\rho}_0)-\boldsymbol{\rho}_0\|_{C^{4+\alpha,\frac{1+\alpha}{4}}_{TJ}(\Gammaastdelta)}\\
	&\le \frac{R}{2}+\|\Lambda(\boldsymbol{\rho}_0)-\boldsymbol{\rho}_0\|_{C^{4+\alpha,\frac{1+\alpha}{4}}_{TJ}(\Gammaastdelta)},
	\end{align*}
	where we used that $\Lambda$ is a $\frac{1}{2}$-contraction on $X_{R,\delta}^{\brho_0}$. As we want $R$ to be independent of $\brho_0$ we want to find an estimate for the second summand that is uniformly in $\varepsilon$. For this, we note that the function $\boldsymbol{w}:=\Lambda(\boldsymbol{\rho}_0)-\boldsymbol{\rho}_0$ solves the system
	\begin{align}
	\partial_t w^i &=\mathcal{A}_{all}^iw^i+\mathfrak{f}^i_{0}(\boldsymbol{\rho}_0) &  &\text{on }\Gamma_{\ast,\delta}^i, i=1,2,3,\notag\\
	\mathcal{B}\boldsymbol{w}&=0 & &\text{on }\Sigma_{\ast,\delta}, \label{EquationSystemForSelfMappingTJ}\\
	w^i\big|_{t=0}&=0 & &\text{on }\Gamma_{\ast}^i, i=1,2,3,\notag
	\end{align}
	with the inhomogeneity 
	\begin{align}
	\mathfrak{f}^i_0(\boldsymbol{\rho}_0):=\Delta_{\brho_0}H_{\brho_0}^i.
	\end{align}
	We want to give a short explanation of this. As $\Lambda(\boldsymbol{\rho}_0)$ and $\boldsymbol{\rho}_0$ have the same initial data their difference vanishes at $t=0$. The boundary inhomogeneity operator from (\ref{EquationInhomogeneityOperatorBoundary}) does not explicitly depend on the time and so we get
	\begin{align}\label{Equation81020181}
	\mathcal{B}(\Lambda(\brho_0))(t)=\mathcal{B}(\Lambda(\brho_0))(0) \quad \forall t\in[0,\delta].
	\end{align}
	Furthermore, due to the compatibility conditions (\ref{EquationGeometricCompabilityConditionforSDFTJ}) for $\brho_0$ we get
	\begin{align}\label{Equation81020182}
	\mathcal{B}(\Lambda(\brho_0))\big|_{t=0}=\mathcal{B}(\brho_0)\big|_{t=0}.
	\end{align}
	Combining (\ref{Equation81020181}) and (\ref{Equation81020182}) we derive $(\ref{EquationSystemForSelfMappingTJ})_2$.\\
	Finally, recalling (\ref{EquationDefinitionInhomeneityf}) we see that
	\begin{align}
	\mathfrak{f}^i(\brho_0)=\Delta_{\brho_0}H^i_{\brho_0}-\mathcal{A}^i_{all}(\brho_0^i,\brho_0\big|_{\Sigmaast})
	\end{align}
	Additionally, $\brho_0$ solves $(\ref{EquationLinearisedSurfaceDiffusionTripleJunctionAbstractFormulaton})_1$ with $\mathfrak{f}^i=\mathcal{A}^i_{all}(\brho_0^i,\brho\big|_{\Sigmaast})$. Together this shows now also $(\ref{EquationSystemForSelfMappingTJ})_1$.\\
Now, we can apply Theorem \ref{theoremShorttimeexistenceLinearisedSDFTJGeneralInitialdata} as due to the geometric compatibility conditions \eqref{EquationGeometricCompabilityConditionforSDFTJ} condition (\ref{EquationCompatibilityConditionsLinearisedTJ}) is fulfilled for $\mathfrak{f}^i_0(\brho_0)$ and therefore we get using  (\ref{EquationEnergieabschatzungenLineareGleichungVollTJ}) that
	\begin{align*}
	\|\boldsymbol{w}\|_{C^{4+\alpha, 1+\frac{\alpha}{4}}_{TJ}(\Gamma_{\ast,\delta})}\le C\sum_{i=1}^3\|\mathfrak{f}^i_0(\boldsymbol{\rho})\|_{C^{4+\alpha, 1+\frac{\alpha}{4}}(\Gamma_{\ast,\delta}^i)}\le C'(\varepsilon).
	\end{align*}
	Here, we used that $\Delta_{\brho}H_{\brho}$ depends continuously on derivatives of up to order four. This leads us now to
	\begin{align*}
	\|\Lambda(\boldsymbol{u})-\boldsymbol{\rho}_0\|_{C^{4+\alpha, 1+\frac{\alpha}{4}}_{TJ}(\Gamma_{\ast,\delta})}\le \frac{R}{2}+C'(\varepsilon).
	\end{align*}
	By choosing $R_0>2C'(\varepsilon)$ we get $\Lambda(\boldsymbol{u})\in X_{R,\delta}^{\brho_0}$ for all $R>R_0$ and so this shows that $\Lambda$ is a self-mapping. 
\end{proof}
\begin{proof}(of Proposition \ref{TheoremSTESurfaceDiffTripleJunction})
	Lemma \ref{LemmaWelldefinednessofLambdaTripleJunction} guarantees well-definedness of $\Lambda$. Then, Corollary \ref{CorollaryContractivityTripleJunction} and Lemma \ref{LemmaSelfMappingofLambdaTripleJunction} show that $\Lambda$ is a self-mapping on $X_{R,\delta}^{\brho_0}$ and a $\frac{1}{2}$-contraction. We then can apply Banach's fixed-point theorem to get the existence of a unique fixed-point of $\Lambda$ in $X_{R,\delta}^{\brho_0}$.
\end{proof}
From this we conclude immediately the existence result in Theorem \ref{TheoremSTETripleJunctions}. Observe that due to our efforts in the proof of Lemma \ref{LemmaSelfMappingofLambdaTripleJunction} the existence time and the bound for the solution by the Radius $R$ is uniformly in $\brho_0$. Also, every solution in a ball with a a radius $R>R_0$ will corresponds with the solution just found. This can pe proven with a standard argument for every approach with Banach's fixed point theorem, cf. \cite[Remark 3.14]{goesswein2019Dissertation}. So in total this finishes now our proof of Theorem \ref{TheoremSTETripleJunctions}.

\section*{Acknowledgement}
The second author was partially funded by the DFG
through the Research Training Group GRK 1692 \textit{Curvature, Cycles, and Cohomology} in Regensburg. The support is gratefully acknowledged.

\bibliography{BibFileSTESurfDiffTripJunc} 
\bibliographystyle{acm}
\end{document}